\def\bfB{\mathbf{B}}
\def\bfC{\mathbf{C}}
\newcommand{\Hom}{\operatorname{Hom}}
\newcommand{\Mat}{\operatorname{M}}
\newcommand{\Mats}{\operatorname{S}}
\newcommand{\id}{\operatorname{id}}
\newcommand{\Ker}{\operatorname{Ker}}
\newcommand{\Vect}{\operatorname{span}}
\newcommand{\im}{\operatorname{Im}}
\newcommand{\tr}{\operatorname{tr}}
\newcommand{\rk}{\operatorname{rk}}
\newcommand{\codim}{\operatorname{codim}}
\renewcommand{\setminus}{\smallsetminus}
\renewcommand{\vec}{\overrightarrow}
\newcommand{\modu}{\operatorname{mod}}
\def\F{\mathbb{F}}
\def\K{\mathbb{K}}
\renewcommand{\L}{\mathbb{L}}
\def\calA{\mathcal{A}}
\def\calB{\mathcal{B}}
\def\calL{\mathcal{L}}
\def\calM{\mathcal{M}}
\def\calR{\mathcal{R}}
\def\calS{\mathcal{S}}
\def\calT{\mathcal{T}}
\def\calV{\mathcal{V}}
\def\calW{\mathcal{W}}
\def\lcro{\mathopen{[\![}}
\def\rcro{\mathclose{]\!]}}
\theoremstyle{definition}
\newtheorem{Def}{Definition}[section]
\newtheorem{Not}[Def]{Notation}
\theoremstyle{plain}
\newtheorem{theo}{Theorem}[section]
\newtheorem{prop}[theo]{Proposition}
\newtheorem{cor}[theo]{Corollary}
\newtheorem{lemma}[theo]{Lemma}
\newtheorem{claim}{Claim}
\theoremstyle{plain}
\theoremstyle{remark}
\newtheorem{Rems}{Remarks}
\newtheorem{Rem}[Rems]{Remark}
\title{Quasi-range-compatible affine maps on large operator spaces}
\author{Cl\'ement de Seguins Pazzis\footnote{Universit\'e de Versailles Saint-Quentin-en-Yvelines, Laboratoire de Math\'ematiques
de Versailles, 45 avenue des Etats-Unis, 78035 Versailles cedex, France}
\footnote{e-mail address: dsp.prof@gmail.com}}
\begin{document}


\thispagestyle{plain}

\maketitle

\begin{abstract}
Let $U$ and $V$ be finite-dimensional vector spaces over an arbitrary field $\K$, and
$\calS$ be a subset of the space $\calL(U,V)$ of all linear maps from $U$ to $V$.
A map $F : \calS \rightarrow V$ is called range-compatible when it satisfies
$F(s) \in \im s$ for all $s \in \calS$; it is called quasi-range-compatible when
the condition is only assumed to apply to the operators whose range does not include a fixed $1$-dimensional linear subspace of $V$.
Among the range-compatible maps are the so-called local maps
$s \mapsto s(x)$ for fixed $x \in U$.

Recently \cite{dSPRC1,dSPRC2}, the range-compatible group homomorphisms on $\calS$ were classified when
$\calS$ is a linear subspace of small codimension in $\calL(U,V)$.
In this work, we consider several variations of that problem: we investigate
range-compatible affine maps on affine subspaces of linear operators;
when $\calS$ is a linear subspace, we give the optimal bound on its codimension for all quasi-range-compatible
homomorphisms on $\calS$ to be local.
Finally, we give the optimal upper bound on the codimension of an affine subspace $\calS$ of $\calL(U,V)$
for all quasi-range-compatible affine maps on it to be local.
\end{abstract}

\vskip 2mm
\noindent
\emph{AMS Classification:} 15A03, 15A30

\vskip 2mm
\noindent
\emph{Keywords:} Range-compatible map, quasi-range-compatible map, local map, affine space, operator space.

\section{Introduction}

Let $U$ and $V$ be vector spaces over an arbitrary field $\K$.
Let $\calS$ be a subset of $\calL(U,V)$, the space of all linear maps from $U$ to $V$.
A map $F : \calS \rightarrow V$ is called \textbf{range-compatible} whenever
$$\forall s \in \calS, \; F(s) \in \im s.$$
It is called \textbf{local} when there exists a vector $x \in U$ such that
$$\forall s \in \calS, \; F(s)=s(x).$$
Obviously, every local map is range-compatible, but the converse does not hold in general.

In the rest of the text, we systematically assume that $U$ and $V$ are finite-dimensional.

Given respective bases $\bfB$ and $\bfC$ of $U$ and $V$,
to every map $F : \calS \rightarrow V$ is attached a map $G : \calM \rightarrow \K^n$,
in which $\calM$ denotes the set of all matrices representing the operators of $\calS$ in the above bases,
and $G(\Mat_{\bfB,\bfC}(s))=\Mat_{\bfC}(F(s))$ for all $s \in \calS$
(here, $\Mat_{\bfB,\bfC}(s)$ stands for the matrix of $s$ in the bases $\bfB$ and $\bfC$, and
$\Mat_{\bfC}(F(s))$ stands for the matrix of the vector $F(s)$ in the basis $\bfC$).
We shall say that $G$ represents $F$ in the bases $\bfB$ and $\bfC$.
Then, $F$ is range-compatible if and only if $\forall M \in \calM, \; G(M) \in \im M$.

\vskip 3mm
Several authors independently discovered that every range-compatible linear map on $\calL(U,V)$ is local
(this result is implicit in \cite{Dieudonne}, for instance), but the emergence of the concept and its systematic study are very recent.
Besides the simplicity of the formulation of the problem, the main motivation for studying range-compatible maps
is the following:
\begin{itemize}
\item Range-compatible linear maps are connected, through duality, to the notion of algebraic reflexivity for operator spaces (see Section 1.1 of \cite{dSPRC2}).
\item Theorems on range-compatible linear maps are involved in recent advances in the theory of large spaces of matrices with rank bounded above \cite{dSPclass}; they are also involved in advances in the study of invertibility preserving linear maps \cite{dSPlargelinpres}.
\item Finally, theorems on range-compatible semi-linear maps are expected to yield advances in the theory of
full-rank preserving linear maps.
\end{itemize}

Let us first make a quick summary of the state of the art on range-compatible maps.
Here is the basic result:

\begin{theo}\label{totalspace}
Every range-compatible linear map on $\calL(U,V)$ is local.
\end{theo}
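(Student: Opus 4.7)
The plan is to reduce to the rank-one operators, on which range-compatibility already pins down the values of $F$ almost completely, and then to exploit linearity to globalize. Fix bases $(e_1,\ldots,e_n)$ of $U$ and $(f_1,\ldots,f_p)$ of $V$, together with the dual basis $(e_1^*,\ldots,e_n^*)$ of $U^*$. Introduce the elementary operators $E_{i,j} \in \calL(U,V)$ defined by $E_{i,j}(u) = e_i^*(u) f_j$; these form a basis of $\calL(U,V)$, so to determine $F$ it suffices to compute each $F(E_{i,j})$.

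Since $\im E_{i,j} = \K f_j$, range-compatibility forces $F(E_{i,j}) = \alpha_{i,j}\, f_j$ for some scalar $\alpha_{i,j} \in \K$. The key step is to show that, when $p \geq 2$, the scalar $\alpha_{i,j}$ does not depend on $j$. Fix $i$ and $j \neq k$; the operator $E_{i,j} + E_{i,k}$ sends $e_i$ to $f_j + f_k$ and all other $e_\ell$ to $0$, so its image is the line $\K(f_j+f_k)$. Range-compatibility then says that $F(E_{i,j}+E_{i,k})$ lies in $\K(f_j+f_k)$, while linearity gives $F(E_{i,j}+E_{i,k}) = \alpha_{i,j}f_j + \alpha_{i,k}f_k$. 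The two conditions together force $\alpha_{i,j} = \alpha_{i,k}$. Denote the common value by $x_i \in \K$.

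Having obtained $F(E_{i,j}) = x_i f_j$ for all $(i,j)$, set $x := \sum_{i=1}^n x_i\, e_i \in U$. For an arbitrary $s \in \calL(U,V)$ with coordinates $s = \sum_{i,j} a_{i,j} E_{i,j}$, linearity yields
\[
F(s) = \sum_{i,j} a_{i,j}\, x_i\, f_j = \sum_i x_i\, s(e_i) = s(x),
\]
which proves that $F$ is local. The remaining case $p = 1$ is immediate: $\calL(U,V) \simeq U^*$, every linear form on $U^*$ is the evaluation at a unique vector $x \in U$, and range-compatibility is automatic because $\im s = V$ as soon as $s \neq 0$.

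The only delicate point is the factorization step showing that $\alpha_{i,j}$ is independent of $j$; once this is in place, the rest is a routine bookkeeping on the basis $(E_{i,j})$. Notice that this argument uses the full space $\calL(U,V)$ only in order to have access to every $E_{i,j}$ and every sum $E_{i,j}+E_{i,k}$; in the sequel one will have to reconstruct analogous rank-one test operators inside subspaces of small codimension.
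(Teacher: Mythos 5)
Your proof is correct. Note that the paper does not actually prove Theorem \ref{totalspace}: it is quoted as a classical fact (attributed as implicit in Dieudonn\'e's work), so there is no proof to compare against; your argument --- pinning down $F$ on the rank-one operators $E_{i,j}$, using the test operators $E_{i,j}+E_{i,k}$ to see that the scalar is independent of $j$, and then extending by linearity --- is the standard elementary verification, and your separate treatment of the case $\dim V=1$ via double duality correctly covers the one situation where the comparison of two distinct lines $\K f_j$, $\K f_k$ is unavailable.
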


This result was extended as follows in \cite{dSPclass}:

\begin{theo}\label{maintheolin1}
Let $\calS$ be a linear subspace of $\calL(U,V)$ such that $\codim \calS \leq \dim V-2$.
Then, every range-compatible linear map on $\calS$ is local.
\end{theo}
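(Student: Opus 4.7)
I would proceed by induction on $n:=\dim V$. The base case $n\le 2$ forces $\codim\calS\le 0$, so $\calS=\calL(U,V)$ and Theorem \ref{totalspace} applies. Assume $n\ge 3$ and that the statement holds in dimension $n-1$. Fix a hyperplane $H\subset V$ and denote by $\pi\colon V\twoheadrightarrow V/H\simeq\K$ the canonical projection. Range-compatibility of $F$ forces the linear form $\pi\circ F\colon\calS\to\K$ to vanish on $\calS\cap\calL(U,H)$ (because $\im s\subset H$ implies $F(s)\in H$), so it factors through the natural injection $\calS/(\calS\cap\calL(U,H))\hookrightarrow\calL(U,V)/\calL(U,H)\simeq\calL(U,V/H)\simeq U^\ast$. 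Extending the induced linear form from this subspace to the whole of $U^\ast$ produces a vector $x_H\in U$ with $F(s)-s(x_H)\in H$ for every $s\in\calS$. Writing $L_y$ for the local map $s\mapsto s(y)$ and replacing $F$ by $F-L_{x_H}$, we may then assume that $F$ takes all its values in $H$; its restriction to $\calS\cap\calL(U,H)$ is then a range-compatible linear map valued in $H$.

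One is now tempted to invoke the inductive hypothesis on this restriction. A direct dimension count gives
\[
\codim_{\calL(U,H)}(\calS\cap\calL(U,H))=\dim(\calS+\calL(U,H))-\dim\calS\le\codim\calS\le n-2,
\]
with equality whenever $\calS+\calL(U,H)=\calL(U,V)$. Unfortunately the induction demands a codimension bounded by $\dim H-2=n-3$, so the naive reduction loses exactly one unit in the generic case $\calS+\calL(U,H)=\calL(U,V)$. This is the \textbf{main obstacle}: the induction fails by precisely one unit of codimension, and some additional information must be extracted to make up the deficit.

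I expect the gap to be closed by varying $H$. Whenever some hyperplane $H$ satisfies $\calS+\calL(U,H)\subsetneq\calL(U,V)$, the codimension drops strictly, the inductive hypothesis applies and yields a vector $y_H\in U$ such that $F-L_{y_H}$ vanishes on $\calS\cap\calL(U,H)$; a further hyperplane argument should then show $F=L_{y_H}$ on the whole of $\calS$. In the rigid regime where $\calS+\calL(U,H)=\calL(U,V)$ for \emph{every} hyperplane $H$---i.e.\ the induced map $\calS\to U^\ast$ is surjective for each projection $\pi$---the subspace $\calS$ must contain an abundance of rank-one operators $v\otimes\phi$: for every nonzero $v\in V$ the set $\calS_v:=\{\phi\in U^\ast:v\otimes\phi\in\calS\}$ has codimension at most $n-2$ in $U^\ast$. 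Range-compatibility forces $F(v\otimes\phi)=c_v(\phi)\,v$ on these, and linearity of $F$ makes $c_v$ a linear form on $\calS_v$, hence an evaluation $\phi\mapsto\phi(x_v)$ for some $x_v\in U$; expanding $F$ on $(v+v')\otimes\phi=v\otimes\phi+v'\otimes\phi$ produces the bilinear constraint $x_v=x_{v'}$ for generic independent pairs, which patches the $x_v$ into a single vector $x\in U$ with $F=L_x$ on all rank-one elements of $\calS$, and hence, by linearity and the fact that $\bigcap_H H=\{0\}$, on all of $\calS$.
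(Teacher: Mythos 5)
The paper itself does not prove this theorem: it is quoted from \cite{dSPclass}. It does, however, prove two close analogues (Theorems \ref{affinegeneralfield} and \ref{quasitheo1}) by a common inductive scheme that is designed precisely to avoid the obstacle you run into. Instead of restricting to $\calS\cap\calL(U,H)$ for a hyperplane $H$ of $V$, one quotients $V$ by a \emph{line} $\K y$ chosen so that $\calS^\bot y\neq\{0\}$; the identity $\codim(\calS\modu y)=\codim\calS-\dim \calS^\bot y$ shows that the codimension hypothesis passes to the quotient with no loss, so the induction closes. Doing this for two non-collinear $y_1,y_2$ gives $x_1,x_2$ with $F(s)-s(x_i)\in\K y_i$; either $x_1=x_2$ and $F$ is local, or $\calS$ is forced to equal the explicit space $\{s:\ s(x_1-x_2)\in\Vect(y_1,y_2)\}$ of codimension exactly $\dim V-2$, which is then disposed of directly. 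Your first reduction (producing $x_H$ with $F-L_{x_H}$ valued in $H$) is correct, and your diagnosis that restricting to $\calS\cap\calL(U,H)$ loses one unit of codimension is accurate; but both repairs you sketch have genuine holes.

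First, in the non-rigid case you obtain $y_H$ with $F=L_{y_H}$ on $\calS\cap\calL(U,H)$ only, and the promised ``further hyperplane argument'' showing $F=L_{y_H}$ on all of $\calS$ is missing: the restriction carries no information about $F$ on a complement of $\calS\cap\calL(U,H)$ in $\calS$, and nothing established so far forces $F$ to be local there. Second, in the rigid case the patching of the $x_v$ requires $\calS_v\cap\calS_{v'}$, of codimension up to $2(\dim V-2)$ in $U^\ast$, to separate the points of $U$; this fails when $\dim U\le 2(\dim V-2)$, and indeed when $\dim U<\dim V-1$ the spaces $\calS_v$ may be reduced to $\{0\}$, so $\calS$ may contain no rank-one operators at all and the whole analysis is vacuous. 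Worse, even granting $F=L_x$ on every rank-one element of $\calS$, the conclusion ``hence, by linearity, on all of $\calS$'' is a non sequitur: $\calS$ need not be spanned by its rank-one operators. The final appeal to $\bigcap_H H=\{0\}$ would work if you showed that the (unique, in the rigid regime) vectors $x_H$ coincide for all $H$, but that is exactly what remains to be proved. As it stands, the proposal is a correct first step plus an accurate identification of the difficulty, not a proof.
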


This result was later extended to (group) homomorphisms in \cite{dSPRC1}:

\begin{theo}\label{maintheogroup}
Let $\calS$ be a linear subspace of $\calL(U,V)$ such that $\codim \calS \leq \dim V-2$.
Then, every range-compatible homomorphism on $\calS$ is local.
\end{theo}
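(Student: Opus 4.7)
My strategy is to reduce to Theorem \ref{maintheolin1} by showing that every range-compatible group homomorphism $F : \calS \to V$ is automatically $\K$-linear. Since $F$ is additive by hypothesis, the only task is to establish homogeneity: $F(\lambda s) = \lambda F(s)$ for all $\lambda \in \K$ and $s \in \calS$. Once this is in place, Theorem \ref{maintheolin1} immediately yields that $F$ is local.

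As a warm-up, and to treat the full-space case $\codim \calS = 0$, I would first prove the statement when $\calS = \calL(U,V)$. Fixing a basis $(e_1,\ldots,e_n)$ of $U$, range-compatibility applied to a rank-one operator $v \otimes e_i^*$ forces $F(v \otimes e_i^*) = c_i(v)\, v$ for some scalar $c_i(v)$ with $c_i(0)=0$. Additivity of $F$ applied to linearly independent $v, w \in V$ then gives
\[
c_i(v+w)(v+w) = c_i(v)\, v + c_i(w)\, w,
\]
which forces $c_i(v) = c_i(w)$. Hence $c_i$ is constant on $V\setminus\{0\}$ (using $\dim V \geq 2$, without which the codimension assumption is vacuous); calling $c_i$ this common value and setting $x := \sum_i c_i\, e_i$, additivity of $F$ applied to the decomposition $s = \sum_i s(e_i) \otimes e_i^*$ yields $F(s) = s(x)$ for every $s \in \calL(U,V)$.

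For the general case, I fix $s \in \calS$ and introduce $\phi_s : \K \to V$, $\lambda \mapsto F(\lambda s) - \lambda F(s)$. One checks that $\phi_s$ is a group homomorphism from $(\K,+)$ into $(V,+)$, that $\phi_s(\lambda) \in \im s$ for every $\lambda \in \K$, and that $\phi_s(1) = 0$; the goal is to show $\phi_s \equiv 0$. To exploit the codimension bound, for each $\lambda \in \K \setminus \{0,1\}$ I would construct auxiliary operators $t \in \calS$ such that the images $\im t$, $\im(s+t)$, and $\im(\lambda s + t)$ interact in a sufficiently controlled fashion; combining the range-compatibility of $F$ on those three operators with the additivity identities $F(s+t) = F(s) + F(t)$ and $F(\lambda s + t) = F(\lambda s) + F(t)$ should allow one to pin down $\phi_s(\lambda) = 0$. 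The codimension hypothesis $\codim \calS \leq \dim V - 2$ is what should provide, via a dimension count, the existence of enough such $t$.

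\textbf{Main obstacle.} The delicate step is the last one. Over a prime field, additivity of $F$ would imply $\K$-linearity automatically, but for an arbitrary $\K$ the additive maps $\K \to V$ underlying $\phi_s$ can be highly irregular, and only range-compatibility, cleverly combined with the ambient codimension bound, can rule them out. Making the choice of auxiliary operator $t$ sensitive to $\lambda$ and $s$ while keeping the range-intersection configuration under control will be the technical heart of the proof, and one expects the bound $\dim V - 2$ to be tight precisely because of the dimension count involved in this construction.
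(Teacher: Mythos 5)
There is a genuine gap. First, note that the paper does not prove Theorem \ref{maintheogroup} at all: it is quoted from \cite{dSPRC1}. Your warm-up for the full space $\calL(U,V)$ is correct (it is essentially Lemma \ref{dimU=1lemma} combined with the splitting lemma, cf.\ Corollary \ref{quasitotalspace}), and for a prime field your reduction to Theorem \ref{maintheolin1} is immediate since additive maps are then automatically $\K$-linear. But for a general field the entire content of the theorem is concentrated in the step you leave unproven, namely $\phi_s(\lambda)=0$, and the mechanism you propose for it does not survive scrutiny. The auxiliary operators $t$ you need — ones for which $\im t$, $\im(s+t)$ and $\im(\lambda s+t)$ are simultaneously small enough to pin down $F(\lambda s)$ — are easy to produce in $\calL(U,V)$ (take $t$ of rank one with the same kernel as a rank-one $s$), but a subspace of codimension up to $\dim V-2$ need not contain any rank-one operators, and for an operator $s$ of rank $\geq 2$ the constraint $\phi_s(\lambda)\in\im s$ is far too weak on its own. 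No dimension count is offered that would guarantee the existence of suitable $t$, and it is not clear one exists: proving homogeneity is not genuinely easier than proving locality (locality implies linearity, so your intermediate claim is equivalent in strength to the theorem itself restricted to each $s$), which is why the reduction buys nothing without a new idea.

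For comparison, the proof in \cite{dSPRC1} — as reflected in the analogous arguments carried out in this paper, e.g.\ the proofs of Theorems \ref{affinegeneralfield} and \ref{quasitheo1} — treats homomorphisms directly by induction on $\dim V$: one picks two non-collinear vectors $y_1,y_2$ with $S^\bot y_i\neq\{0\}$, applies the quotient construction of Lemma \ref{quotientgeneral} to get local maps $F\bmod y_1$ and $F\bmod y_2$ by induction, and then either the two witnesses $x_1,x_2$ coincide (so $F$ is local) or the condition $s(x_1-x_2)\in\operatorname{span}(y_1,y_2)$ forces $\calS$ to equal a specific codimension-$(\dim V-2)$ space of the form $\calR\coprod \Mat_{n,p-1}(\K)$, which is handled by the splitting lemma (Lemma \ref{splittinglemma}) together with the one-column case. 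Linearity of $F$ is never established as an intermediate step; it is obtained only a posteriori from locality. I would encourage you to rework your argument along these lines, or else to supply a concrete construction of the auxiliary operators $t$ together with the dimension count that guarantees their existence inside $\calS$.
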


The upper-bound $\dim V-2$ turned out to be optimal for homomorphisms in general, but not for linear maps.
For linear maps, the following theorem, which was the main result in \cite{dSPRC1}, gives the optimal upper-bound:

\begin{theo}\label{maintheolin2}
Let $\calS$ be a linear subspace of $\calL(U,V)$, with $\dim U \geq 2$.
Assume that $\codim \calS \leq 2\dim V-3$, and that $\codim \calS \leq 2 \dim V-4$ if $\# \K=2$.
Then, every range-compatible linear map on $\calS$ is local.
\end{theo}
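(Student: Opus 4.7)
The plan is to induct on $p := \dim V$, working with matrix representations after fixing bases of $U$ and $V$: write $\calM \subset \Mat_{p,n}(\K)$ for $\calS$ and $G : \calM \to \K^p$ for $F$, so range-compatibility reads $G(M) \in \im M$. The base cases $p \leq 2$ are treated directly: for $p = 1$ the hypothesis $2p-3 = -1$ is vacuous, and for $p = 2$ the bound $\codim \calM \leq 1$ leaves only a few configurations up to $\GL$-equivalence, each handled by inspection.

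For the inductive step with $p \geq 3$, I would fix a line $\K e_1 \subset V$ and complementary hyperplane $H = \Vect(e_2,\dots,e_p)$, splitting $M = \binom{L(M)}{M'}$ with $L(M) \in \Mat_{1,n}(\K)$ the first row and $M' \in \Mat_{p-1,n}(\K)$ the remaining rows. Set $\calM' := \{M' : M \in \calM\}$ and $\calM_0 := \{M \in \calM : M' = 0\}$, identifying the latter with a subspace of $\Mat_{1,n}(\K)$. A dimension count gives
\[
\codim_{\Mat_{p-1,n}(\K)} \calM' \;=\; \codim_{\Mat_{p,n}(\K)} \calM \;-\; \codim_{\Mat_{1,n}(\K)} \calM_0.
\]
Range-compatibility forces the $H$-component $G' : \calM \to \K^{p-1}$ of $G$ to vanish on $\calM_0$, so $G'$ descends to a range-compatible linear map $\bar G' : \calM' \to \K^{p-1}$. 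Provided $\codim \calM_0 \geq 2$ in $\Mat_{1,n}(\K)$, the codimension of $\calM'$ is at most $2(p-1) - 3$, so the inductive hypothesis yields $\bar G'(M') = M'x$ for some $x \in \K^n$. Subtracting the local map $M \mapsto Mx$ reduces the problem to the case $G(M) = g(M)\, e_1$ for some linear form $g : \calM \to \K$.

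The scalar residue is handled by noting that $g(M) = 0$ whenever $e_1 \notin \im M$, hence $g$ vanishes on $\calM \cap \{L = 0\}$ and factors as $g(M) = L(M)\, x_0$ for some $x_0 \in \K^n$. Adjusting the earlier choice of $x$ by an element of $\Ker \calM'$ shifts $x_0$ modulo $\Ker \calM'$, and combining this freedom with the residual range-compatibility constraints at strategically chosen matrices of $\calM$ produces a vector $y \in \K^n$ with $G(M) = My$ for all $M \in \calM$.

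The main obstacle is to secure, through a judicious choice of line $\K e_1 \subset V$, the codimension inequality $\codim_{\Mat_{1,n}(\K)} \calM_0 \geq 2$ that the inductive step requires. If no such line exists, then $\calM \cap \calL(U, \K v)$ has codimension at most $1$ in $\calL(U, \K v)$ for every line $\K v \subset V$; this is a rigid structural condition from which one shows either that $\calM$ is so close to $\calL(U,V)$ that Theorem \ref{maintheolin1} applies, or that the rank-$1$ geometry of $\calM$ forces the conclusion directly. The exceptional hypothesis $\codim \calM \leq 2\dim V - 4$ when $\#\K = 2$ absorbs a combinatorial obstruction over the two-element field in this line-selection step, where the usual counting arguments are tightest.
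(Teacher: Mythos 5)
First, a point of order: this paper does not prove Theorem~\ref{maintheolin2} at all; it is quoted as the main result of \cite{dSPRC1} and used as a black box (e.g.\ in Case~1 and Case~2 of the proof of Theorem~\ref{affinemorethan2}). So there is no in-paper proof to compare against. Your skeleton --- induct on $\dim V$ by quotienting modulo a line $\K e_1$ along which $\codim_{\Mat_{1,n}(\K)}\calM_0\geq 2$ (equivalently $\dim S^\bot e_1\geq 2$, which drops the codimension by at least $2$ and keeps the inductive hypothesis applicable), then treat the scalar residue --- is indeed the method that pervades this circle of papers, and your dimension count and the descent of $G'$ to $\calM'$ are correct.

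However, two steps that you assert are precisely where the theorem is hard, and as written they are gaps. The serious one is the residue step. After subtracting the local map you have $G(M)=g(M)\,e_1$ with $g(M)=L(M)x_0$, and locality of $G$ is \emph{equivalent} to $x_0$ lying in $\Ker\calM'+\bigl\{z: L(M)z=0 \text{ for all } M\in\calM\bigr\}$; this is exactly the statement that fails in the counterexamples at codimension $2\dim V-2$ (and at $2\dim V-3$ over $\F_2$). The only information available to force it is that $L(M)x_0=0$ whenever $e_1\notin\im M$, i.e.\ whenever the row $L(M)$ lies in the row space of $M'$ --- a non-linear condition on $\calM$ whose exploitation is the actual content of \cite{dSPRC1} (it is where the bound $2\dim V-3$, the $\F_2$ exception, and a classification of the critical non-local maps all enter). ``Combining this freedom with the residual range-compatibility constraints at strategically chosen matrices'' names the problem rather than solving it. The second gap is the alternative when no good line exists: here the statement you need is Lemma~\ref{quasialternative} of the present paper (if $\dim S^\bot y\leq 1$ for all $y$ and $\codim S>1$, then $S$ is represented by $\calR\coprod\Mat_{n,p-1}(\K)$), after which the splitting lemma and a short computation on the column space $\calR$ finish; this part is recoverable, but ``the rank-$1$ geometry forces the conclusion directly'' is not an argument. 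Finally, the base case $\dim V=2$, $\codim\calS=1$, $\#\K\geq 3$ is not a routine inspection: it already requires the $\Mats_2(\K)$-type computation (compare Case~1 of the proof of Theorem~\ref{affinemorethan2} and Lemma~\ref{symlemma}), with the field $\F_3$ needing separate care.
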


In this article, we shall focus on variations of the above theorems
that are motivated by research on large spaces of matrices with rank bounded above.
The theorems proven here will allow us to extend a famous theorem of
Atkinson and Lloyd \cite{AtkLloyd} in two directions: first, we will be able to extend it
to affine subspaces of matrices over arbitrary fields, and then we will double the range
of dimensions for which the classification of bounded rank spaces is known over all fields.

Here, we consider three main problems. Firstly, we want to consider range-compatible affine maps on affine subspaces of operators.
Secondly, we want to consider a variation of the problem in which
range-compatibility is replaced with the following weaker property:

\begin{Def}
Let $\calS$ be a subset of $\calL(U,V)$, and $D$ be a $1$-dimensional linear subspace of $V$.
A map $F : \calS \rightarrow V$ is called \textbf{quasi-range-compatible with respect to $D$} when
$$\forall s \in \calS, \; D \not\subset \im s \Rightarrow F(s) \in \im s.$$
The map $F$ is called \textbf{quasi-range-compatible} when it is quasi-range-compatible with respect to some $1$-dimensional linear subspace of $V$.
\end{Def}

Finally, we shall tackle a mix of the above two problems, by considering quasi-range-compatible affine maps on large affine subspaces
of $\calL(U,V)$.

Those problems are studied in Sections \ref{RCaffine}, \ref{quasiRC} and \ref{quasiRCaffine}, respectively.
Before we tackle the first one, some general considerations that apply to all of them are needed:
we devote the next section to them.

Before we start, let us state a short remark that will be often used in this work:
let $\calS$ be a \emph{linear} subspace of $\calL(U,V)$ and $F : \calS \rightarrow V$ be an \emph{affine} range-compatible map.
Then, $F$ must be linear. Indeed, $F$ must map the zero operator to the sole vector of its range, that is $0$.
The same holds if $F$ is only assumed to be quasi-range-compatible.

\section{Quotient and splitting space techniques}

\subsection{Quotient space techniques}

We recall the following notation and result from \cite{dSPRC1}:

\begin{Not}
Let $\calS$ be a subset of $\calL(U,V)$ and $V_0$ be a linear subspace of $V$.
Denote by $\pi : V \rightarrow V/V_0$ the canonical projection.
Then, we set
$$\calS \modu V_0:=\bigl\{\pi \circ s \mid s \in \calS\bigr\},$$
which is a subset of $\calL(U,V/V_0)$.
\end{Not}

In particular, $\calS \modu V_0$ is a linear (respectively, affine) subspace of $\calL(U,V/V_0)$ whenever $\calS$ is a linear
(respectively, affine) subspace of $\calL(U,V)$.

\begin{lemma}[Lemma 2.6 in \cite{dSPRC1}]\label{quotientgeneral}
Let $\calS$ be a linear subspace of $\calL(U,V)$ and $F : \calS \rightarrow V$ be a range-compatible homomorphism.
Then, there is a unique range-compatible homomorphism $F \modu V_0 : \calS \modu V_0 \rightarrow V/V_0$
such that
$$\forall s \in \calS, \; (F \modu V_0)(\pi \circ s)=\pi(F(s)),$$
i.e.\ such that the following diagram is commutative
$$\xymatrix{
\calS \ar[rr]^F \ar[d]_{s \mapsto \pi \circ s} & & V \ar[d]^\pi \\
\calS \modu V_0 \ar[rr]_{F \modu V_0} & & V/V_0.
}$$
\end{lemma}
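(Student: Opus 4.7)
The strategy is to define $F \modu V_0$ by the prescribed formula $(F \modu V_0)(\pi \circ s) := \pi(F(s))$ and check that this recipe actually works: well-definedness, homomorphism property, range-compatibility, and uniqueness. Uniqueness is immediate because the surjectivity of $s \mapsto \pi \circ s$ from $\calS$ onto $\calS \modu V_0$ forces any candidate to satisfy the formula on every element of $\calS \modu V_0$.

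The crux of the argument — and really the only step that uses anything nontrivial — is well-definedness. Suppose $s_1, s_2 \in \calS$ satisfy $\pi \circ s_1 = \pi \circ s_2$. Setting $s := s_1 - s_2$, which lies in $\calS$ because $\calS$ is linear, we have $\pi \circ s = 0$, i.e.\ $\im s \subset V_0$. Since $F$ is a homomorphism, $F(s) = F(s_1) - F(s_2)$, and range-compatibility gives $F(s) \in \im s \subset V_0$, so $\pi(F(s_1)) = \pi(F(s_2))$. This is precisely where range-compatibility is used; without it the construction fails.

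Once $F \modu V_0$ is well-defined, the remaining verifications are routine. The homomorphism property follows from the computation
\[
(F \modu V_0)\bigl(\pi \circ s_1 + \pi \circ s_2\bigr) = (F \modu V_0)\bigl(\pi \circ (s_1+s_2)\bigr) = \pi\bigl(F(s_1+s_2)\bigr) = \pi(F(s_1)) + \pi(F(s_2)),
\]
using the linearity of $s \mapsto \pi \circ s$, the linearity of $\pi$, and the fact that $F$ is a homomorphism. Range-compatibility of $F \modu V_0$ is equally direct: for $s \in \calS$,
\[
(F \modu V_0)(\pi \circ s) = \pi(F(s)) \in \pi(\im s) = \im(\pi \circ s).
\]

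The main obstacle, if one can call it that, is really only psychological: recognizing that range-compatibility is exactly the hypothesis that makes the quotient map descend. Beyond that identification, the proof is a diagram chase. No delicate case analysis, no restriction on the field, and no finiteness assumption beyond what is already in force is needed.
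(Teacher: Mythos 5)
Your proof is correct and follows exactly the route the paper intends: the paper recalls this lemma from \cite{dSPRC1} without reproving it, but it explicitly remarks that ``the key to the existence of $F \modu V_0$ is the fact that $\pi(F(s))=0$ for every $s \in \calS$ whose range is included in $V_0$,'' which is precisely your well-definedness step, and the proofs it does give for the analogous Lemmas \ref{affinequotient} and \ref{quasiquotient} proceed identically. Nothing to add.
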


When $V_0=\K y$ for some non-zero vector $y$, we simply write $\calS \modu y$ instead of $\calS \modu \K y$, and $F \modu y$ instead of
$F \modu \K y$.

The key to the existence of $F \modu V_0$ is the fact that $\pi(F(s))=0$ for every $s \in \calS$ whose range is included in
$V_0$.

In this work, we shall need various generalizations of the above lemma.

\begin{lemma}\label{affinequotient}
Let $\calS$ be an affine subspace of $\calL(U,V)$ and $V_0$ be a linear subspace of $V$.
Let $F : \calS \rightarrow V$ be a range-compatible affine map. Denote by $S$ the translation vector space of $\calS$ and by
$\overrightarrow{F}$ the linear part of $F$.
Assume that $\overrightarrow{F}(s) \in V_0$ for every operator $s \in S$ such that $\im s \subset V_0$.
Then, there is a unique range-compatible affine map $F \modu V_0 : \calS \modu V_0 \rightarrow V/V_0$
such that
$$\forall s \in \calS, \; (F \modu V_0)(\pi \circ s)=\pi(F(s)),$$
where $\pi$ denotes the canonical projection of $V$ onto $V/V_0$.
\end{lemma}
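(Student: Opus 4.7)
The plan is to construct $F \modu V_0$ by the obvious formula and check well-definedness, then verify the affine and range-compatibility properties.

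First I would establish well-definedness. Fix $s_1, s_2 \in \calS$ with $\pi \circ s_1 = \pi \circ s_2$; I need $\pi(F(s_1)) = \pi(F(s_2))$. Since $\calS$ is affine with direction $S$, the difference $t := s_1 - s_2$ lies in $S$, and the equality of the composites means precisely that $\im t \subset V_0$. Because $F$ is affine with linear part $\vec{F}$, one has $F(s_1) - F(s_2) = \vec{F}(t)$, and the standing hypothesis gives $\vec{F}(t) \in V_0$, so $\pi(F(s_1)) = \pi(F(s_2))$. This is the only real content of the lemma and the step where the hypothesis is used; uniqueness of $F \modu V_0$ is then immediate from the required commutative diagram.

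Next I would verify that $F \modu V_0$ is affine. Fix a base-point $s_0 \in \calS$ so that $\calS = s_0 + S$ and $\calS \modu V_0 = (\pi \circ s_0) + T$, where $T := \{\pi \circ t \mid t \in S\}$ is the translation space of $\calS \modu V_0$. The map $S \rightarrow T, \ t \mapsto \pi \circ t$ is linear and surjective; the composite $t \mapsto \pi(\vec{F}(t))$ is linear on $S$ and, by the same argument as above, vanishes on its kernel. It therefore factors through a unique linear map $T \rightarrow V/V_0$, which is the linear part of $F \modu V_0$.

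Finally, range-compatibility is essentially tautological: given $\bar{s} \in \calS \modu V_0$, write $\bar{s} = \pi \circ s$ with $s \in \calS$; then $(F \modu V_0)(\bar{s}) = \pi(F(s)) \in \pi(\im s) = \im(\pi \circ s) = \im \bar{s}$, using the range-compatibility of $F$. The main obstacle is really only the well-definedness step; everything else is formal once the hypothesis on $\vec{F}$ has been put to work.
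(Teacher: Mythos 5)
Your proposal is correct and follows the same route as the paper: the well-definedness argument (taking $t=s_1-s_2\in S$ with $\im t\subset V_0$ and invoking the hypothesis on $\overrightarrow{F}$) is exactly the paper's key step, and the paper leaves the affineness and range-compatibility checks to the reader, which you have simply written out. No gaps.
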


\begin{proof}
The uniqueness part is obvious. For the existence, let $s_1$ and $s_2$ be operators in $\calS$ such that
$\pi \circ s_1=\pi \circ s_2$. Then, $s_1-s_2$ belongs to $S$ and its range is included in $V_0$, whence
$\overrightarrow{F}(s_1-s_2) \in V_0$. Thus, $\pi(F(s_1))=\pi(F(s_2))$.
Therefore, we obtain a map $F \modu V_0 : \calS \modu V_0 \rightarrow V/V_0$ such that
$$\forall s \in \calS, \; (F \modu V_0)(\pi \circ s)=\pi(F(s)).$$
One easily checks that $F \modu V_0$ is affine and range-compatible.
\end{proof}

Lemma \ref{quotientgeneral} can also be generalized to quasi-range-compatible homomorphisms:

\begin{lemma}\label{quasiquotient}
Let $\calS$ be a linear subspace of $\calL(U,V)$, and
$F : \calS \rightarrow V$ be a homomorphism that is quasi-range-compatible with respect to some $1$-dimensional linear subspace $D$ of $V$.
Let $V_0$ be a linear subspace of $V$ that does not include $D$.
Then, there is a unique quasi-range-compatible homomorphism $F \modu V_0 : \calS \modu V_0 \rightarrow V/V_0$
such that
$$\forall s \in \calS, \; (F \modu V_0)(\pi \circ s)=\pi(F(s)),$$
where $\pi$ denotes the canonical projection of $V$ onto $V/V_0$.
\end{lemma}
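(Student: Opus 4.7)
The plan is to adapt the construction from Lemma \ref{quotientgeneral}, with the hypothesis $D \not\subset V_0$ being the lever that keeps the quasi-range-compatibility machinery alive after passage to the quotient. First I would verify that $F$ descends through $\pi$ to a well-defined group homomorphism on $\calS \modu V_0$; then I would identify the natural ``forbidden'' direction for $F \modu V_0$ and check quasi-range-compatibility with respect to it.

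For the descent step, since $F$ is a group homomorphism and $\pi$ is linear, it suffices to show that $F(s) \in V_0$ for every $s \in \calS$ with $\im s \subset V_0$. This is where the assumption $D \not\subset V_0$ does the essential work: for such an $s$, the combined conditions $\im s \subset V_0$ and $D \not\subset V_0$ force $D \not\subset \im s$, so quasi-range-compatibility of $F$ yields $F(s) \in \im s \subset V_0$. Uniqueness of $F \modu V_0$ is then immediate (the representatives $\pi \circ s$ with $s \in \calS$ cover $\calS \modu V_0$), and additivity of $F \modu V_0$ follows from additivity of $F$ together with linearity of $\pi$, since any two preimages of a given element of $\calS \modu V_0$ differ by an operator $s$ with $\im s \subset V_0$, to which the previous observation applies.

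For the quasi-range-compatibility part, the natural candidate for the distinguished line is $D':=\pi(D)$, which is genuinely $1$-dimensional precisely because $D \not\subset V_0$. Take $t \in \calS \modu V_0$ with $D' \not\subset \im t$ and write $t=\pi \circ s$ for some $s \in \calS$. If one had $D \subset \im s$, then one would immediately get $\pi(D) \subset \pi(\im s) = \im t$, contradicting the assumption on $t$; hence $D \not\subset \im s$, so the quasi-range-compatibility of $F$ gives $F(s) \in \im s$, and therefore $(F \modu V_0)(t)=\pi(F(s)) \in \pi(\im s)=\im t$, as required.

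There is no deep obstacle here; the argument is essentially a mild reworking of Lemma \ref{quotientgeneral}. The one point that requires care is that the hypothesis $D \not\subset V_0$ must be invoked twice, for two conceptually distinct reasons: once to ensure that operators with range inside $V_0$ are sent into $V_0$ by $F$ (so that the quotient map exists), and once to ensure that $\pi(D)$ is itself a $1$-dimensional subspace of $V/V_0$ (so that it can legitimately serve as the distinguished direction for $F \modu V_0$).
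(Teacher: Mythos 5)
Your proposal is correct and follows essentially the same route as the paper: use $D \not\subset V_0$ to get $F(s) \in \im s \subset V_0$ for every $s$ whose range lies in $V_0$ (so the homomorphism descends), then check quasi-range-compatibility with respect to $\pi(D)$ via the contrapositive $\pi(D) \subset \im(\pi\circ s)$ whenever $D \subset \im s$. Your explicit remark that $D \not\subset V_0$ also guarantees $\pi(D)$ is genuinely $1$-dimensional is a point the paper leaves implicit, but the argument is the same.
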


\begin{proof}
For all $s \in \calS$ such that $\im s \subset V_0$, we know that $F(s)\in V_0$ since $D \not\subset \im s$,
and hence $\pi(F(s))=0$. It follows that there exists a unique homomorphism $F \modu V_0 : \calS \modu V_0 \rightarrow V/V_0$
such that
$$\forall s \in \calS, \; (F \modu V_0)(\pi \circ s)=\pi(F(s)).$$
Let us check that $F \modu V_0$ is quasi-range-compatible with respect to the $1$-dimensional subspace $\pi(D)$.
Let $s \in \calS$ be such that $\pi(D)\not\subset \im (\pi \circ s)$.
Then, $D \not\subset \im s$, and hence $F(s) \in \im s$, which in turn yields $(F \modu V_0)(\pi \circ s) \in \pi(\im s)=\im(\pi \circ s)$.
Therefore, $F \modu V_0$ is quasi-range-compatible.
\end{proof}

Finally, we give a version of Lemma \ref{quasiquotient} for quasi-range-compatible affine maps:

\begin{lemma}\label{quasiaffinequotient}
Let $\calS$ be an affine subspace of $\calL(U,V)$ and
$F : \calS \rightarrow V$ be an affine map that is quasi-range-compatible with respect to some $1$-dimensional linear subspace $D$ of $V$.
Denote by $S$ the translation vector space of $\calS$ and by
$\overrightarrow{F}$ the linear part of $F$.
Let $V_0$ be a linear subspace of $V$ that does not include $D$.
Assume that $\overrightarrow{F}(s) \in V_0$ for every operator $s \in S$ such that $\im s \subset V_0$.
Then, there is a unique quasi-range-compatible affine map $F \modu V_0 : \calS \modu V_0 \rightarrow V/V_0$
such that
$$\forall s \in \calS, \; (F \modu V_0)(\pi \circ s)=\pi(F(s)),$$
where $\pi$ denotes the canonical projection of $V$ onto $V/V_0$.
\end{lemma}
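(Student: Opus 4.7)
The plan is to adapt the proofs of Lemmas \ref{affinequotient} and \ref{quasiquotient} simultaneously: the affine part of the argument handles well-definedness of $F \modu V_0$ by means of the assumption on $\vec{F}$, and the quasi-range-compatibility is transferred to $\pi(D)$ in the same way as in Lemma \ref{quasiquotient}.

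First I would dispose of uniqueness, which is immediate because the map $s \mapsto \pi \circ s$ from $\calS$ to $\calS \modu V_0$ is surjective by definition, so the commutativity condition determines $F \modu V_0$ pointwise. For existence, I would take $s_1, s_2 \in \calS$ with $\pi \circ s_1 = \pi \circ s_2$ and show $\pi(F(s_1)) = \pi(F(s_2))$. The difference $s_1 - s_2$ lies in the translation space $S$ and has range contained in $V_0$, so by hypothesis $\vec{F}(s_1-s_2) \in V_0$. Since $F$ is affine, $F(s_1) - F(s_2) = \vec{F}(s_1 - s_2)$, which yields $\pi(F(s_1)) = \pi(F(s_2))$. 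This produces a well-defined map $F \modu V_0 : \calS \modu V_0 \rightarrow V/V_0$ satisfying the commutativity square.

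Next I would check that $F \modu V_0$ is affine: if $s_0 \in \calS$ is a fixed base point, then for every $s \in S$ one has $(F \modu V_0)(\pi \circ (s_0+s)) = \pi(F(s_0)) + \pi(\vec{F}(s))$, and the map $\pi \circ s \mapsto \pi(\vec{F}(s))$ on $S \modu V_0$ is well-defined (by the same hypothesis on $\vec{F}$) and visibly linear, so $F \modu V_0$ is indeed affine with linear part factoring $\vec{F}$ through $\pi$.

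Finally, I would verify quasi-range-compatibility of $F \modu V_0$ with respect to the $1$-dimensional subspace $\pi(D)$; note that $\pi(D)$ is indeed $1$-dimensional precisely because $V_0$ does not include $D$. Let $s \in \calS$ with $\pi(D) \not\subset \im(\pi \circ s)$. Pulling this back via $\pi$ gives $D \not\subset \im s$, so the quasi-range-compatibility of $F$ yields $F(s) \in \im s$, and then $(F \modu V_0)(\pi \circ s) = \pi(F(s)) \in \pi(\im s) = \im(\pi \circ s)$, as desired. The argument offers no genuine obstacle: the only point requiring care is the use of $D \not\subset V_0$ to ensure that $\pi(D)$ remains a $1$-dimensional witness of quasi-range-compatibility downstairs, and the use of the hypothesis on $\vec{F}$ at exactly the right step to make $F \modu V_0$ well-defined.
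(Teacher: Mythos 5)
Your proposal is correct and is precisely the adaptation the paper has in mind: the paper omits the proof, stating only that it is an easy combination of the arguments for Lemmas \ref{affinequotient} and \ref{quasiquotient}, and your write-up carries out exactly that combination (well-definedness via the hypothesis on $\overrightarrow{F}$, then transfer of quasi-range-compatibility to $\pi(D)$, using $D \not\subset V_0$ to keep $\pi(D)$ one-dimensional).
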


The proof is an easy adaptation of the ones of Lemmas \ref{affinequotient} and \ref{quasiquotient}, and we shall therefore omit it.

\subsection{Orthogonality and quotient spaces}

The bilinear form
$$\begin{cases}
\calL(U,V) \times \calL(V,U) & \longrightarrow \K \\
(s,t) & \longmapsto \tr(t \circ s)
\end{cases}$$
is non-degenerate on both sides. In the rest of the article, we consider the orthogonality relation
between operators in $\calL(U,V)$ and operators in $\calL(V,U)$ that is inherited from this bilinear form.
In particular, given a linear subspace $\calS$ of $\calL(U,V)$, one defines
$$\calS^\bot:=\bigl\{t \in \calL(V,U) : \; \forall s \in \calS, \quad \tr(t \circ s)=0\bigr\}$$
and one obtains that $\calS^\bot$ is a linear subspace of $\calL(V,U)$ such that
$$\dim \calS+\dim \calS^\bot=\dim \calL(U,V).$$
Moreover, for every linear subspace $\calS$ of $\calL(U,V)$, combining the above results on dimensions with
the identity $\forall (s,t)\in \calL(U,V) \times \calL(V,U), \; \tr(t \circ s)=\tr(s \circ t)$ yields
$$(\calS^\bot)^\bot=\calS.$$

Let $\calS$ be an affine subspace of $\calL(U,V)$, with translation vector space denoted by $S$, and let $y$ be a non-zero vector of $V$.
Denote by $\pi : V \rightarrow V/\K y$ the canonical projection.
One sees that
$$\dim S^\bot y+\dim \{s \in S : \; \pi \circ s=0\}=\dim U.$$
Thus, by the rank theorem,
$$\codim (\calS \modu y)=\codim \calS - \dim S^\bot y.$$

\subsection{Splitting techniques}

\begin{Not}
Let $m,n,p,q$ be non-negative integers. Given respective subsets $\calA$ and $\calB$ of $\Mat_{m,n}(\K)$ and $\Mat_{p,q}(\K)$, one
sets
$$\calA \vee \calB:=\biggl\{\begin{bmatrix}
A & C \\
0 & B
\end{bmatrix} \mid A \in \calA, \; B \in \calB, \; C \in \Mat_{m,q}(\K)\biggr\},$$
which is a subset of $\Mat_{m+p,n+q}(\K)$.
\end{Not}

\begin{Not}
Let $n,p,q$ be non-negative integers, and $\calA$ and $\calB$ be respective subsets of $\Mat_{n,p}(\K)$ and $\Mat_{n,q}(\K)$.
We define
$$\calA \coprod \calB :=\Bigl\{\begin{bmatrix}
A & B
\end{bmatrix} \mid (A,B) \in \calA \times \calB\Bigr\},$$
which is a subset of $\Mat_{n,p+q}(\K)$.
\end{Not}

The first three results in the following lemma were established in \cite{dSPRC1} (Lemma 2.5), the fourth one is essentially obvious:

\begin{lemma}[Splitting Lemma]\label{splittinglemma}
Let $n,p,q$ be non-negative integers, and $\calA$ and $\calB$ be linear subspaces, respectively, of $\Mat_{n,p}(\K)$ and $\Mat_{n,q}(\K)$.

Given maps $f : \calA \rightarrow \K^n$ and $g : \calB \rightarrow \K^n$,
set
$$f \coprod g : \begin{bmatrix}
A & B
\end{bmatrix} \in \calA \coprod \calB \longmapsto f(A)+g(B).$$
Then:
\begin{enumerate}[(a)]
\item The linear maps (respectively, the homomorphisms) from $\calA \coprod \calB$ to $\K^n$
are the maps of the form $f \coprod g$, where $f$ and $g$ are linear maps (respectively, homomorphisms) defined on $\calA$ and $\calB$, respectively.
\item Given $f \in \Hom(\calA,\K^n)$ and $g \in \Hom(\calB,\K^n)$, the homomorphism
$f \coprod g$ is range-compatible if and only if $f$ and $g$ are range-compatible.
\item Given $f \in \Hom(\calA,\K^n)$ and $g \in \Hom(\calB,\K^n)$, the homomorphism
$f \coprod g$ is local if and only if $f$ and $g$ are local.
\item Given $f \in \Hom(\calA,\K^n)$ and $g \in \Hom(\calB,\K^n)$, if the homomorphism
$f \coprod g$ is quasi-range-compatible then so are $f$ and $g$.
\end{enumerate}
\end{lemma}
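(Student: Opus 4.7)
My plan is to exploit the decomposition $[A \mid B] = [A \mid 0] + [0 \mid B]$ together with the evident identities $\im[A \mid 0] = \im A$, $\im[0 \mid B] = \im B$ and $\im[A \mid B] = \im A + \im B$. Parts (a)--(c) are explicitly the content of Lemma 2.5 in \cite{dSPRC1}, so I would only sketch them; my focus is on the new part (d).

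For (a), given a homomorphism (respectively, linear map) $h : \calA \coprod \calB \to \K^n$, I would define $f(A) := h([A \mid 0])$ and $g(B) := h([0 \mid B])$: these are homomorphisms (respectively, linear maps) on the two factors, and the additive decomposition above forces $h = f \coprod g$, the converse being automatic. For (b), the ``if'' direction is immediate from $\im A, \im B \subset \im[A \mid B]$, while the ``only if'' direction is obtained by specialization to $B = 0$ (respectively $A = 0$) and the fact that a homomorphism vanishes at $0$, yielding $f(A) = (f \coprod g)([A \mid 0]) \in \im[A \mid 0] = \im A$. Part (c) uses the same specialization trick: if $f \coprod g$ is local with data $x \in \K^{p+q}$, decompose $x = (x_1, x_2)$ with $x_1 \in \K^p$ and $x_2 \in \K^q$; setting $B = 0$ yields $f(A) = A x_1$ and setting $A = 0$ yields $g(B) = B x_2$, so both summands are local. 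The converse follows from the identity $[A \mid B] \begin{pmatrix} x_1 \\ x_2 \end{pmatrix} = A x_1 + B x_2$.

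For (d), I would argue as follows. Assume $f \coprod g$ is quasi-range-compatible with respect to a one-dimensional subspace $D \subset \K^n$. I claim that both $f$ and $g$ are quasi-range-compatible with respect to the \emph{same} line $D$. For $f$: take $A \in \calA$ with $D \not\subset \im A$; since $\im[A \mid 0] = \im A$, we also have $D \not\subset \im[A \mid 0]$, so the hypothesis on $f \coprod g$ yields $(f \coprod g)([A \mid 0]) \in \im[A \mid 0] = \im A$. But $g(0) = 0$ because $g$ is a homomorphism, so $(f \coprod g)([A \mid 0]) = f(A) + g(0) = f(A)$, and hence $f(A) \in \im A$. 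The argument for $g$ is entirely symmetric.

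I foresee no serious obstacle. The only conceptual subtlety in (d) is whether the same witness line $D$ can serve for both summands; the zero-padding argument shows it does, which is presumably why the authors describe (d) as ``essentially obvious''.
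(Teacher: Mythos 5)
Your proof is correct, and it matches the paper's intent: the paper gives no proof at all, citing Lemma 2.5 of \cite{dSPRC1} for parts (a)--(c) and declaring part (d) ``essentially obvious.'' Your zero-padding argument for (d) --- using $\im[A \mid 0]=\im A$ and $g(0)=0$ to show that the \emph{same} line $D$ witnesses quasi-range-compatibility of both $f$ and $g$ --- is exactly the intended obvious argument.
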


\section{Range-compatible affine maps}\label{RCaffine}

\subsection{Main results}\label{affinesection1}

\begin{theo}\label{affinegeneralfield}
Let $U$ and $V$ be finite-dimensional vector spaces, and $\calS$ be an affine subspace of $\calL(U,V)$.
Assume that $\codim \calS \leq \dim V-2$. Then, every range-compatible affine map on $\calS$ is local.
\end{theo}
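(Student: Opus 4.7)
The natural strategy is to reduce to the linear result (Theorem \ref{maintheolin1}), splitting on whether $\calS$ contains $0$.

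If $0 \in \calS$, then $\calS$ is a linear subspace, and the remark at the end of the introduction forces $F$ to be linear (from $F(0) \in \im 0 = \{0\}$); Theorem \ref{maintheolin1} then yields that $F$ is local.

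For $0 \notin \calS$, I would fix a base point $s_0 \in \calS$, write $\calS = s_0 + S$, and denote by $\overrightarrow{F} : S \to V$ the linear part of $F$. The plan is to extend $F$ to the linear subspace $\tilde{\calS} := \K s_0 \oplus S = \Vect(\calS)$ via the linear formula $\tilde{F}(\lambda s_0 + s) := \lambda F(s_0) + \overrightarrow{F}(s)$, which has $\codim \tilde{\calS} = \codim \calS - 1 \leq \dim V - 3$. If $\tilde{F}$ is range-compatible on $\tilde{\calS}$, Theorem \ref{maintheolin1} gives it local, hence $F = \tilde{F}|_{\calS}$ is local. For $\lambda \neq 0$, range-compatibility of $\tilde{F}$ at $\lambda s_0 + s$ is automatic: $\tilde{F}(\lambda s_0 + s) = \lambda F(s_0 + \lambda^{-1} s) \in \lambda \im(s_0 + \lambda^{-1} s) = \im(\lambda s_0 + s)$ by range-compatibility of $F$ at $s_0 + \lambda^{-1} s \in \calS$.

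The main obstacle is range-compatibility of $\tilde{F}$ at $\lambda = 0$, i.e., the claim that $\overrightarrow{F}(s) \in \im s$ for every $s \in S$. I would prove this by induction on $\dim V$, invoking Lemma \ref{affinequotient} with $V_0 = \K y$ for a suitably chosen nonzero $y \in V$. Because $\calS \subsetneq \calL(U,V)$ forces $S^\bot \neq 0$, one can pick $y$ with $\dim S^\bot y \geq 1$, ensuring $\codim(\calS \modu y) \leq \dim V - 3 = \dim(V/\K y) - 2$ via the orthogonality formula of Section~2.2, so the inductive hypothesis applies to $F \modu y$. The technical challenge is verifying the compatibility hypothesis of Lemma \ref{affinequotient}---that $\overrightarrow{F}(s) \in \K y$ for every $s \in S$ with $\im s \subseteq \K y$---which I expect to require either a careful choice of $y$ (for instance, one making $S \cap \calL(U, \K y) = \{0\}$ when the codimensions permit, rendering the hypothesis vacuous) or a direct verification of this condition from the range-compatibility of $F$ along the line $\{s_0 + \mu s : \mu \in \K\} \subseteq \calS$. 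Once this claim is established, Theorem \ref{maintheolin1} applied to $\tilde{F}$ completes the argument.
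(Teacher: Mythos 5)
Your reduction for the case $0 \in \calS$ and your handling of the $\lambda \neq 0$ part of the extension $\tilde{F}$ are fine, but the argument has a genuine gap exactly where you locate "the technical challenge," and neither of your two proposed fixes works. To apply Lemma \ref{affinequotient} with $V_0=\K y$ you must show that $\overrightarrow{F}(s) \in \K y$ for every $s \in S$ with $\im s \subseteq \K y$. Your first suggestion (choose $y$ so that $S \cap \calL(U,\K y)=\{0\}$) is unavailable in general: $\calL(U,\K y)$ has dimension $\dim U$, so $\dim\bigl(S \cap \calL(U,\K y)\bigr) \geq \dim U - \codim\calS \geq \dim U - \dim V + 2$, which is positive for \emph{every} line $\K y$ as soon as $\dim U \geq \dim V-1$ (e.g.\ square matrices). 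Your second suggestion, reading off the condition from range-compatibility along the line $\{s_0+\mu s\}$, only yields $F(s_0)+\mu\overrightarrow{F}(s) \in \im(s_0+\mu s) \subseteq \im s_0 + \K y$, hence $\overrightarrow{F}(s) \in \im s_0+\K y$; this is vacuous when $s_0$ is surjective and in any case far weaker than $\overrightarrow{F}(s)\in\K y$. The missing idea is the paper's Lemma \ref{quasiRCrank1lemma}: call a hyperplane $H$ \emph{good} if it contains the range of some element of $\calS$; then $\overrightarrow{F}$ maps $\{t \in S : \im t \subseteq H\}$ into $H$, a bad hyperplane must be the kernel of a rank~$1$ element of $S^\bot$, and when $\codim\calS\leq\dim V-2$ the common kernel $W$ of those rank~$1$ elements has dimension $\geq 2$, which lets one write the range of any rank~$1$ operator of $S$ as an intersection of good hyperplanes. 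That is what legitimizes the quotient step, and nothing in your sketch replaces it.

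A second, smaller problem is that your route through $\tilde{F}$ needs $\overrightarrow{F}(s)\in\im s$ for \emph{all} $s \in S$, not just those of rank $\leq 1$. That statement is true only a posteriori (it follows from locality of $F$), and your plan to derive it by induction is circular as stated: quotienting by $\K y$ and invoking an inductive hypothesis about $\overrightarrow{F \modu y}$ gives only $\overrightarrow{F}(s) \in \im s + \K y$, and assembling such inclusions into $\overrightarrow{F}(s)\in\im s$ for operators of large rank requires additional work you have not supplied. The paper avoids this entirely: it only ever needs the rank~$\leq 1$ case (to justify Lemma \ref{affinequotient}), runs the induction directly on the conclusion "$F$ is local," quotients by two non-collinear lines $\K y_1,\K y_2$ with $S^\bot y_i \neq \{0\}$ to obtain candidate vectors $x_1,x_2$, and in the case $x_1 \neq x_2$ shows that $\calS$ is forced to equal a linear subspace of codimension $\dim V-2$, whereupon Theorem \ref{maintheolin1} applies. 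I would recommend abandoning the extension to $\Vect(\calS)$ and restructuring along those lines after first proving the rank~$1$ statement.
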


For fields with more than $2$ elements, this result can be improved as follows:

\begin{theo}\label{affinemorethan2}
Assume that $\K$ has more than $2$ elements. Let $\calS$ be an affine subspace of $\calL(U,V)$
with $\codim \calS \leq \dim V-1$. Then, every range-compatible affine map on $\calS$ is local.
\end{theo}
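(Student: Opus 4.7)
Since Theorem \ref{affinegeneralfield} handles $\codim\calS \leq \dim V - 2$, the task reduces to the extremal case $\codim\calS = \dim V - 1$, which I plan to treat by induction on $\dim V$. The base case $\dim V = 1$ forces $\calS = \calL(U,V)$; since $F(0) \in \im 0 = \{0\}$, the affine map $F$ is linear, and Theorem \ref{totalspace} concludes.

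For the inductive step I fix $s_0 \in \calS$, pick $u_0 \in U$ with $s_0(u_0) = F(s_0)$, and subtract the local map $s \mapsto s(u_0)$ from $F$ to reduce to the case $F(s_0) = 0$; then $\vec F(s) \in \im(s_0+s)$ for all $s$ in the translation space $S$. My plan is to apply Lemma \ref{affinequotient} with $V_0 = \K y$ for a carefully chosen non-zero $y \in V$, producing an affine range-compatible map $F \modu y$ on $\calS \modu y \subset \calL(U, V/\K y)$. By the codimension formula of Section~2.2, requiring $\dim S^\bot y \geq 1$ gives $\codim(\calS \modu y) \leq \dim(V/\K y) - 1$, so the inductive hypothesis in dimension $\dim V - 1$ yields some $x \in U$ with $\pi(F(s)) = \pi(s(x))$ for all $s \in \calS$. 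The residue $G(s) := F(s) - s(x)$ is then an affine range-compatible map valued in $\K y$.

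The main obstacle is selecting $y$ to satisfy simultaneously: (i) $\vec F(s) \in \K y$ for every $s \in S$ with $\im s \subset \K y$ (the hypothesis of Lemma \ref{affinequotient}); (ii) $\dim S^\bot y \geq 1$, excluding the proper linear subspace $V_{\mathrm{bad}} := \{y : \{t \in \calL(U,V) : \im t \subset \K y\} \subset S\}$ (proper because rank-one operators span $\calL(U,V)$ and $\codim S \geq 1$); and (iii) $y \notin \im s_0$. Condition (i) is the subtle one. Parametrising the affine line $\{s_0 + \lambda s : \lambda \in \K\}$ for $s \in S$ with $\im s \subset \K y$ and writing $s(u) = \alpha(u) y$ yields the identity $\lambda \vec F(s) = s_0(u_\lambda) + \lambda \alpha(u_\lambda) y$ for some $u_\lambda \in U$; comparing this at three distinct scalars $\lambda$---available precisely because $|\K| \geq 3$---extracts the rigidity needed to force $\vec F(s) \in \K y$ after a careful choice of $y$ in $V \setminus (V_{\mathrm{bad}} \cup \im s_0)$. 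The breakdown of this three-point argument over $\F_2$ is exactly why that field is excluded.

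With $y$ in hand and $G$ constructed, (iii) gives $G(s_0) \in \K y \cap \im s_0 = \{0\}$, whence $G(s_0)=0$; and the linear part $\vec G : S \to \K y$ satisfies $\vec G(s) \in \K y \cap \im(s_0+s)$, hence vanishes on the non-empty Zariski-open subset $\{s \in S : y \notin \im(s_0+s)\}$ of the irreducible affine space $S$ (non-empty because $0$ belongs to it, thanks to (iii)). A standard Zariski density argument then forces $\vec G \equiv 0$, so $G \equiv 0$ and $F = (s \mapsto s(x))$ is local. The quotient reduction, the inductive step, and the final elimination of the residue are routine adaptations of the techniques developed in Section~2; the construction of $y$ is where the real work lies.
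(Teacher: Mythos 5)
Your overall skeleton (induction on $\dim V$, quotient by a line, comparison with a local map) is the paper's, but two of your key steps have genuine gaps. First, the mechanism you propose for condition (i) does not deliver what you need. From range-compatibility along the line $s_0+\lambda s$ you get $\lambda\vec{F}(s)=s_0(u_\lambda)+\lambda\alpha(u_\lambda)y$, hence for each nonzero $\lambda$ only $\vec{F}(s)\in \im s_0+\K y$; evaluating at three scalars gives three memberships in the \emph{same} subspace $\im s_0+\K y$, so no rigidity is extracted and $\vec{F}(s)\in\K y$ does not follow. The paper obtains the needed statement (Lemma \ref{quasiRCrank1lemma}) by a different device: call a hyperplane $H$ good if it contains the range of some element of $\calS$ (then $\vec{F}(t)\in H$ whenever $\im t\subset H$), observe that bad hyperplanes must be kernels of rank-one elements of $S^\bot$, and write $\im t$ as an intersection of good hyperplanes. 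This produces a single exceptional direction $D_0$ outside of which $\vec{F}$ is range-compatible on rank-one elements of $S$, and that is what legitimizes the quotient construction.

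Second, and more seriously, your endgame fails. After quotienting by a single direction $y$ you are left with a range-compatible affine residue $G$ valued in $\K y$, which you propose to kill by Zariski density. But $\K$ may be finite ($\#\K=3$ is allowed), where such density arguments are unavailable, and your argument makes no use of the hypothesis $\codim\calS\le\dim V-1$ at this stage --- yet the residue genuinely need not vanish without it: for the affine space of matrices whose first column is fixed equal to the first standard vector (codimension $\dim V$), the map $M\mapsto m_{2,2}\,e_1$ is a nonzero $\K e_1$-valued range-compatible affine map even though $e_1\in\im M$ for every $M$ in the space. The paper avoids this trap by quotienting in two or three independent directions $y_1,y_2$ (and $y_3$ when $\dim V\ge 3$): if the resulting vectors $x_i$ coincide, the residues lie in $\K y_i\cap\K y_j=\{0\}$; if they differ, a codimension count forces $\calS$ to coincide with a linear subspace, whereupon Theorems \ref{maintheolin1} and \ref{maintheolin2} apply. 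Finally, the case $\dim V=2$ requires a separate and substantially harder analysis (it is there that $\#\K>2$ is used twice, ultimately via the identity $c\lambda^2+(d-a)\lambda-b=0$ for all $\lambda$), which your sketch does not address.
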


When $\K=\F_2$, the provision on the codimension of $\calS$ in Theorem \ref{affinegeneralfield} is optimal, as shown by the following example:
with $n \geq 2$, one considers the affine subspace $\calS$ of $\Mat_{n,p}(\K)$ consisting of all the matrices of the form
$$\begin{bmatrix}
? & [?]_{1 \times (p-1)} \\
1 & [?]_{1 \times (p-1)} \\
[0]_{(n-2) \times 1} & [?]_{(n-2) \times (p-1)}
\end{bmatrix}.$$
One checks that the affine map
$$F : (m_{i,j}) \in \calS \mapsto \begin{bmatrix}
0 \\
m_{1,1}+1 \\
[0]_{(n-2) \times 1}
\end{bmatrix}$$
is range-compatible (indeed, $F(M)$ is always a scalar multiple of the first column of $M$).
However, one also checks that $F$ is non-local.
This example allows us to point out the main difficulty in the affine case:
the linear part of an affine range-compatible map might not be range-compatible!

When $\K$ has more than $2$ elements, the provision on the codimension of $\calS$ from Theorem \ref{affinemorethan2}
is optimal, as the following example demonstrates: with $n \geq 2$ and $p \geq 2$, denote by
$\calT$ the affine subspace of $\Mat_{n,p}(\K)$ consisting of all the matrices of the form
$$\begin{bmatrix}
1 & [?]_{1 \times (p-1)} \\
[0]_{(n-1) \times 1} & [?]_{(n-1) \times (p-1)}
\end{bmatrix}.$$
The affine map
$$F : (m_{i,j}) \in \calT \mapsto \begin{bmatrix}
m_{2,2} \\
[0]_{(n-1) \times 1}
\end{bmatrix}$$
is range-compatible since $F(M)$ is always a scalar multiple of the first column of $M$.
However, one checks that $F$ is non-local.

\subsection{A lemma}

The following lemma will help us use quotient space techniques when we deal with range-compatible affine maps.

\begin{lemma}\label{quasiRCrank1lemma}
Let $\calS$ be an affine subspace of $\calL(U,V)$
with $\codim \calS \leq \dim V-1$.
Let $F : \calS \rightarrow V$ be a range-compatible affine map. Denote by $S$ the translation vector space of $\calS$
and by $\overrightarrow{F}$ the linear part of $F$. Then:
\begin{enumerate}[(a)]
\item For every $1$-dimensional linear subspace $D$ of $V$ that is included in the kernel of every operator of $S^\bot$ with rank $1$,
and for every rank $1$ operator $t \in S$ such that $\im t \neq D$, we have $\vec{F}(t)\in \im t$.
\item There exists a $1$-dimensional linear subspace $D$ of $V$ that is included in the kernel of every operator of $S^\bot$ with rank $1$.
\item If $\codim \calS<\dim V-1$ then $\vec{F}(t) \in \im t$ for every rank $1$ operator $t \in S$.
\end{enumerate}
\end{lemma}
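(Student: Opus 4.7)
My plan is to treat (b) first as the backbone, derive (c) from (a) combined with the stronger form of (b), and establish (a) by a direct range-compatibility argument on the pencils $\{s_0+\lambda t : \lambda \in \K\}$ followed by an intersection step.

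For (b), the observation is purely linear-algebraic: if $\varphi_1\otimes x_1,\ldots,\varphi_k\otimes x_k\in\calL(V,U)$ are nonzero rank-$1$ operators whose forms $\varphi_1,\ldots,\varphi_k\in V^*$ are linearly independent, then they are themselves linearly independent in $V^*\otimes U$, as one sees by completing the $\varphi_i$'s to a basis of $V^*$ and reading off coordinates. Consequently, if the forms arising from rank-$1$ operators of $S^\bot$ spanned $V^*$, we would have $\dim S^\bot\geq\dim V$, contradicting $\dim S^\bot=\codim\calS\leq\dim V-1$. Those forms therefore lie in a proper subspace of $V^*$, whose annihilator in $V$ is nontrivial, and any $1$-dimensional subspace $D$ of this annihilator fulfills (b). Part (c) follows at once: running the same count with $\codim\calS\leq\dim V-2$ produces an annihilator of dimension at least $2$, which contains more than one $1$-dimensional subspace, so for any rank-$1$ $t\in S$ one can choose $D\neq\im t$ inside this annihilator and invoke (a).

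The core of the work is (a). Fix $D=\K y$ satisfying the hypothesis and a rank-$1$ operator $t=e\otimes v\in S$ with $\K v\neq D$; choose $u_0\in U$ with $e(u_0)=1$, so that $U=\ker e\oplus\K u_0$. For every $s_0\in\calS$ and every $\lambda\in\K$, $s_0+\lambda t$ lies in $\calS$ and has image $s_0(\ker e)+\K(s_0(u_0)+\lambda v)$; applying range-compatibility at $s_0+\lambda t$ yields
$$F(s_0)+\lambda\,\vec{F}(t)\in s_0(\ker e)+\K\bigl(s_0(u_0)+\lambda v\bigr).$$
Decomposing this relation against a direct-sum decomposition of $V$ adapted to $s_0(\ker e)$, $\K s_0(u_0)$ and $\K v$, and equating coefficients for several values of $\lambda$, one shows (under $|\K|\geq 3$) that $\vec{F}(t)\in s_0(\ker e)+\K v$ whenever $v\notin\im s_0$, and $\vec{F}(t)\in\im s_0$ whenever $v\in\im s_0$.

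The final step is to intersect these pointwise constraints over $s_0\in\calS$. Through the canonical projection $\pi_v:V\to V/\K v$, writing $\overline{s_0}:=\pi_v\circ s_0$, one finds that $\pi_v(\vec{F}(t))$ belongs to $\overline{s_0}(\ker e)$ for every $s_0\in\calS$ with $v\notin\im s_0$ and to $\im\overline{s_0}$ for every $s_0\in\calS$ with $v\in\im s_0$. The hypothesis on $D$, reformulated as $\K y\subset Sx$ for every $x\in U\setminus\{0\}$, together with the bound $\codim\calS\leq\dim V-1$ and the fact that $y\notin\K v$, is then used to produce an $s_0\in\calS$ satisfying $v\in\im s_0$ and $y\notin\im s_0$; combining the corresponding constraint $\pi_v(\vec{F}(t))\in\im\overline{s_0}$ (which forces $\pi_v(\vec{F}(t))\neq\mu\,\pi_v(y)$ for every $\mu\neq 0$) with the constraints coming from the first family of $s_0$'s pins $\pi_v(\vec{F}(t))$ down to $0$, and hence $\vec{F}(t)\in\K v=\im t$. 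The principal obstacle is precisely this final combination, which depends on producing such a specific $s_0$; the case $\K=\F_2$ then requires separately replacing the $\lambda$-comparison step by an argument exploiting two distinct translates of $s_0$ inside $\calS$.
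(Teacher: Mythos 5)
Parts (b) and (c) of your proposal are correct and coincide with the paper's own argument: the common kernel $W$ of the rank-one operators of $S^\bot$ satisfies $\dim W\geq\dim V-\dim S^\bot=\dim V-\codim\calS>0$, and $\dim W\geq 2$ under the stronger bound of (c), which yields two candidate lines to feed into (a). The genuine gap is in (a), which is the substance of the lemma. Your pencil computation is sound as far as it goes (for $\#\K\geq 3$ it does give $\vec{F}(t)\in s_0(\ker e)+\K v$ when $v\notin\im s_0$, and $\vec{F}(t)\in\im s_0$ when $v\in\im s_0$), but the concluding ``combination'' is asserted rather than proved, and as stated it cannot work. First, from $y\notin\im s_0$ you cannot conclude that $\pi_v(y)\notin\pi_v(\im s_0)$; that would require $y\notin\im s_0+\K v$. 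Second, and more seriously, even granting that, you would only learn that $\pi_v(\vec{F}(t))$ is not a nonzero multiple of $\pi_v(y)$; to conclude $\pi_v(\vec{F}(t))=0$ you would need the intersection of the subspaces $\pi_v(s_0(\ker e))$ over the first family to be contained in $\K\,\pi_v(y)$, and no reason is offered for this --- indeed nothing in your argument shows the first family is nonempty, and your correct reformulation of the hypothesis on $D$ (namely $y\in Sx$ for all $x\neq 0$) is never actually brought to bear. Finally, the case $\#\K=2$ is deferred to an unspecified ``two translates'' argument.

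The missing idea is the paper's good-hyperplane mechanism, which is precisely where the codimension bound and the hypothesis on $D$ enter. Call a hyperplane $H$ of $V$ good if it contains the range of some $s\in\calS$; then for every $t\in S$ with $\im t\subset H$ one gets $\vec{F}(t)=F(s+t)-F(s)\in H$. If $H$ is not good, the map $s\mapsto\pi_H\circ s$ fails to be surjective from $S$ onto $\calL(U,V/H)$, so $S^\bot$ contains a rank-one operator with kernel $H$, and the hypothesis on $D$ forces $D\subset H$. Hence every hyperplane containing $\im t$ but not $D$ is good; since $\im t\neq D$, one can choose a basis of $(\im t)^o$ avoiding the hyperplane $(\im t)^o\cap D^o$, so these good hyperplanes intersect exactly in $\im t$, giving $\vec{F}(t)\in\im t$. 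Note that this argument extracts from each $s\in\calS$ only the hyperplanes containing its range, rather than the range itself, which is why it survives when the operators of $\calS$ all have large image --- the situation in which your pointwise constraints degenerate --- and it is uniform over all fields.
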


\begin{proof}
Let us start with a simple observation. Let $H$ be a linear hyperplane of $V$ which includes the range of some operator
$s \in \calS$. Then, for every $t \in S$ such that $\im t \subset H$, we have $\im(s+t) \subset H$, and hence
$\overrightarrow{F}(t)=F(s+t)-F(s)$ belongs to the linear subspace $H$.
We shall say that $H$ is \textbf{good} if it includes the range of some operator in $\calS$, and bad otherwise.
Thus, given an operator $t \in S$, if we can find good hyperplanes $H_1,\dots,H_p$ of $V$ such that
$\im t=\underset{i=1}{\overset{p}{\bigcap}} H_i$,
then $F(t) \in \im t$.

Next, for a given linear hyperplane $H$ of $V$ to be bad, it is necessary (although not sufficient)
that $H$ be the kernel of some operator in $S^\bot$.
Indeed, if no operator in $S^\bot$ has kernel $H$, then, for every linear map $g  : U \rightarrow V/H$, there
is some $s \in S$ such that the $\pi_H \circ s=g$, where $\pi_H : V \rightarrow V/H$ denotes the canonical projection onto $V/H$;
fixing an arbitrary operator $s_1 \in \calS$, we can find $s \in S$ such that $\pi_H \circ s_1=\pi_H \circ s$, and
hence the range of the operator $s_1-s$ of $\calS$ is included in $H$.

In the linear subspace of $S^\bot$ spanned by its rank $1$ operators, let us pick a basis
$(t_1,\dots,t_p)$ consisting of rank $1$ operators, and let us
set
$$W:=\underset{i=1}{\overset{p}{\bigcap}} \Ker (t_i).$$
Clearly, $W$ is the intersection of all kernels of the operators of $S^\bot$ with rank at most $1$.

Now, we can prove statement (a).
Let $D \subset W$ be a $1$-dimensional linear subspace.
Let $t \in S$ be a rank $1$ operator such that $\im t \neq D$.
For a linear subspace $L$ of $V$, denote by $L^o$ its orthogonal subspace in the dual space $V^\star$ of $V$
(recall that $L^o$ is the set of all linear forms on $V$ that vanish everywhere on $L$).
Then, $(\im t)^o$ and $D^o$ are distinct hyperplanes of $V^\star$,
and hence $(\im t)^o \cap D^o$ is a linear hyperplane of $(\im t)^o$.
Thus, we can find a basis $(\varphi_1,\dots,\varphi_q)$ of $(\im t)^o$ in which no vector belongs to $D^o$: it follows from duality theory that
$\im t=\underset{k=1}{\overset{q}{\bigcap}} \Ker \varphi_k$; thus,
$\im t$ is the intersection of a family of good hyperplanes, and hence $\overrightarrow{F}(t) \in \im t$.
Statement (a) is now proved.

Next, as $\dim S^\bot=\codim \calS <\dim V$, we find that
$$\dim W \geq \dim V-\dim S^\bot>0,$$
which proves statement (b).

Assume finally that $\codim \calS \leq \dim V-2$. Then, $\dim W \geq 2$ and we can therefore pick distinct
$1$-dimensional linear subspaces $D_1$ and $D_2$ of $W$.
Applying point (a) to both $D_1$ and $D_2$ shows that $\vec{F}(t)\in \im t$ for every rank $1$ operator $t$ in $S$
(as then $\im t \neq D_1$ or $\im t \neq D_2$).
\end{proof}

\subsection{Proofs of Theorems \ref{affinegeneralfield} and \ref{affinemorethan2}}

Remember that if $\calS$ is a linear subspace of $\calL(U,V)$, then every range-compatible affine map on $\calS$
is actually linear.

Now, let us prove Theorems \ref{affinegeneralfield} and \ref{affinemorethan2}.
In both proofs, the translation vector space of $\calS$ is denoted by $S$.

\begin{proof}[Proof of Theorem \ref{affinegeneralfield}]
Let $F : \calS \rightarrow V$ be a range-compatible affine map.

We prove the result by induction on $\dim V$.
If $\dim V \leq 1$, the result is vacuous.
Assume now that $\dim V \geq 2$. If $\calS=\calL(U,V)$ then $F$ is linear and
Theorem \ref{totalspace} shows that it is local. Assume now that $\calS \subsetneq \calL(U,V)$.

Let $D$ be a $1$-dimensional linear subspace of $V$, and denote by $\pi : V \rightarrow V/D$ the canonical projection.
Combining point (c) of Lemma \ref{quasiRCrank1lemma} with Lemma \ref{affinequotient}, we obtain a range-compatible affine map
$$F \modu D : \; \calS \modu D \rightarrow V/D$$
such that
$$\forall s \in \calS, \; \pi(F(s))=(F \modu D)(\pi \circ s).$$
Note that if $D=\K y$ and $S^\bot y \neq \{0\}$, then
$$\codim (\calS \modu y) =\codim \calS -\dim(S^\bot y) \leq \dim (V/D)-2.$$
Now, as $S^\bot \neq \{0\}$ and $\dim V \geq 2$ we can choose non-collinear vectors $y_1$ and $y_2$ in $V$
such that $S^\bot y_1 \neq \{0\}$ and $S^\bot y_2 \neq \{0\}$.
By induction $F \modu y_1$ and $F \modu y_2$ are local, which yields vectors $x_1$ and $x_2$ in $U$
such that
$$\forall s \in \calS, \; F(s)-s(x_1) \in \K y_1 \quad \text{and} \quad F(s)-s(x_2)\in \K y_2.$$

Assume that $x_1 \neq x_2$. Then, $s(x_1-x_2) \in \Vect(y_1,y_2)$ for all $s \in \calS$.
Thus, $\calS$ is included in the vector space $\calS'$ of all operators $s \in \calL(U,V)$ such that $s(x_1-x_2) \in \Vect(y_1,y_2)$,
and it is obvious that this space has codimension $\dim V-2$ in $\calL(U,V)$. Then, $\calS=\calS'$
and hence $\calS$ contains $0$. It follows that $F$ is linear, and we deduce from
Theorem \ref{maintheolin1} that it is local.

Assume finally that $x_1=x_2$. Then, as $\K y_1 \cap \K y_2=\{0\}$ we deduce that $F(s)=s(x_1)$ for all $s \in \calS$,
whence $F$ is local.
\end{proof}

\begin{Rem}
It is easy to check from the above proof that Theorem \ref{affinegeneralfield} can be generalized to range-compatible \emph{semi-affine} maps in the following sense: A semi-affine map from $\calS$ to $V$ is defined as a map for which there is a (group) homomorphism $\overrightarrow{F} : S \rightarrow V$
such that $F(t)-F(s)= \overrightarrow{F}(t-s)$ for all $s$ and $t$ in $\calS$.
\end{Rem}

\begin{proof}[Proof of Theorem \ref{affinemorethan2}]
Let $F : \calS \rightarrow V$ be a range-compatible affine map, the linear part of which we denote by $\overrightarrow{F}$.
We use an induction on $\dim V$ to show that $F$ is local.
If $\codim \calS \leq \dim V-2$ then Theorem \ref{affinegeneralfield} readily shows that $F$ is local.
Therefore, in the rest of the proof we assume that $\codim \calS = \dim V-1$.
If $\dim V=1$, then $\calS=\calL(U,V)$; then, $F$ is linear and the statement is known by Theorem \ref{totalspace}.

\vskip 3mm
Assume now that $\dim V=2$. Then $\codim \calS=1$, and hence $S^\bot$ has dimension $1$.
Either $S^\bot$ is spanned by a rank $1$ operator, or it is spanned by a rank $2$ operator.
We tackle each case separately.

\noindent
\textbf{Case 1:} $S^\bot$ contains no rank $1$ operator. \\
Then, point (a) of Lemma \ref{quasiRCrank1lemma} applies to any $1$-dimensional linear subspace of $V$,
and by choosing two of these subspaces we deduce that
$\overrightarrow{F}$ is range-compatible on the rank $1$ operators of $S$.
As $\overrightarrow{F}$ is obviously range-compatible on the operators of rank $0$ or $2$,
we deduce that it is range-compatible.
Since $F$ is an affine map and $F(0)=0$ if $0\in \calS$, there is a unique linear map $\widetilde{F}$ on $\calT:=\Vect(\calS)$
whose restriction to $\calS$ is $F$: noting that the restriction of $\widetilde{F}$ to $S$ is $\overrightarrow{F}$,
one deduces that $\widetilde{F}$ is range-compatible (indeed, for all $t \in \calT$, either $t \in S$ or $\lambda\,t \in \calS$
for some non-zero scalar $\lambda$). Then, by Theorem \ref{maintheolin2}, the map $\widetilde{F}$
is local (note that $2 \dim V-3=\dim V-1$ here), and hence $F$ is local.

\vskip 3mm
\noindent
\textbf{Case 2:} $S^\bot$ contains a rank $1$ operator. \\
Then, $S$ is represented by the matrix space $\K \vee  \Mat_{1,p-1}(\K)$.
If $\calS$ is a linear subspace then again $F$ is local by Theorem \ref{maintheolin2}.
Assume now that $\calS$ is not a linear subspace. Then, in well-chosen bases $(e_1,\dots,e_p)$ and $(f_1,f_2)$ of $U$ and $V$, the operator space
$\calS$ is represented by the set $\calM$ of all matrices of the form
$$\begin{bmatrix}
? & [?]_{1 \times (p-1)} \\
1 & [?]_{1 \times (p-1).}
\end{bmatrix}$$
Any rank $1$ operator in $S^\bot$ has kernel $\K f_1$:
by point (a) of Lemma \ref{quasiRCrank1lemma},
we find that $\overrightarrow{F}$ maps every rank $1$ operator to a vector of its range provided that this range differs from $\K f_1$.
Now, denote by $G : \calM \rightarrow \K^2$ the map that is attached to $F$ in the bases  $(e_1,\dots,e_p)$ and $(f_1,f_2)$,
and by $\overrightarrow{G}$ its linear part. Fix $i \in \lcro 2,n\rcro$.
Then,
$$G_i : X\in \K^2 \mapsto \overrightarrow{G}\Bigl(\begin{bmatrix}
[0]_{2 \times (i-1)} & X & [0]_{2 \times (n-i)}
\end{bmatrix}\Bigr)$$
is linear and maps every vector of $\K^2 \setminus (\K \times \{0\})$ to a scalar multiple of itself.
As $\K$ has more than $2$ elements, we obtain that $G_i$ has at least three pairwise non-collinear eigenvectors,
and hence $G_i=\lambda_i\,\id_{\K^2}$ for some $\lambda_i \in \K$.
Then, denoting by $x$ the vector of $U$ with coordinates $0,\lambda_2,\dots,\lambda_p$ is the basis $(e_1,\dots,e_p)$,
we can replace $F$ with $s \mapsto F(s)-s(x)$.
In this reduced situation, $\overrightarrow{F}$ vanishes at every operator of $S$ that vanishes at $e_1$.
It follows that $F(s)$ is an affine function of $s(e_1)$ only. Returning to $G$, this reads
$$G : (m_{i,j}) \mapsto \begin{bmatrix}
a\,m_{1,1}+b \\
c\,m_{1,1}+d
\end{bmatrix}$$
for some fixed $(a,b,c,d)\in \K^4$. Now, for an arbitrary $\lambda \in \K$, by applying $G$ to $\begin{bmatrix}
\lambda & [0]_{1 \times (p-1)} \\
1 & [0]_{1 \times (p-1)}
\end{bmatrix}$ we deduce that
$$\begin{vmatrix}
\lambda & a \lambda+b \\
1 & c \lambda+d
\end{vmatrix}=0.$$
Thus,
$$\forall \lambda \in \K, \; c\lambda^2+(d-a)\lambda-b=0.$$
As $\K$ has more than $2$ elements, this yields $c=b=0$ and $d=a$, which shows that $F : s \mapsto s(ae_1)$, and hence $F$ is local.

\vskip 3mm
In the remainder of the proof, we assume that $\dim V \geq 3$.

Lemma \ref{quasiRCrank1lemma} yields a $1$-dimensional linear subspace $D_0$ of $V$ such that
$\overrightarrow{F}(t) \in \im t$ for all $t \in S$ such that $\rk t=1$ and $\im t \neq D_0$.

If there existed a basis $(y_1,\dots,y_n)$ of $V$ such that $S^\bot y_i=\{0\}$ for all $i\in \lcro 1,n\rcro$,
then $S^\bot=\{0\}$, contradicting our assumption that $\codim \calS=\dim V-1$.
Thus, we can find a linear hyperplane $H$ of $V$ such that
$\dim S^\bot y>0$ for all $y \in V \setminus H$.
Then, as $\dim V \geq 2$ we know from Lemma 2.5 of \cite{dSPfeweigenvalues} that $V \setminus (H \cup D_0)$
is not included in a linear hyperplane of $V$. This yields linearly independent vectors $y_1,y_2,y_3$
of $V$ such that, for all $i \in \lcro 1,3\rcro$, $y_i \not\in D_0$ and $\dim S^\bot y_i>0$.
Thus, for all $i \in \{1,2,3\}$, Proposition \ref{affinequotient} shows that
the map $F$ induces a range-compatible affine map on $\calS \modu y_i$;
on the other hand $\codim(\calS \modu y_i) \leq \codim \calS -1 \leq \dim(V \modu y_i)-1$,
and by induction we deduce that $F \modu y_i$ is local.

This yields vectors $x_1,x_2,x_3$ such that $F(s)=s(x_i)$ mod $\K y_i$ for all $i \in \{1,2,3\}$.
If there are distinct indices $i$ and $j$ such that $x_i=x_j$, then $F : s \mapsto s(x_i)$.
Assume now that $x_1,x_2,x_3$ are pairwise distinct. Without loss of generality
we can assume that $x_3=0$ (replacing $F$ with $s \mapsto F(s)-s(x_3)$), in which case
$x_1 \neq 0$, $x_2 \neq 0$ and $x_1-x_2 \neq 0$.
Then, for all $s \in \calS$, we have
$$s(x_1) \in \Vect(y_1,y_3), \; s(x_2) \in \Vect(y_2,y_3) \quad \text{and} \quad s(x_1-x_2) \in \Vect(y_1,y_2).$$
If $x_1$ and $x_2$ are collinear, we deduce that, for all $s \in \calS$, the vector
$s(x_1)$ belongs to $\Vect(y_1,y_3) \cap \Vect(y_2,y_3) \cap \Vect(y_1,y_2)=\{0\}$; then, we learn
that $\codim \calS \geq \dim V$, contradicting our assumptions. \\
It follows that $x_1$ and $x_2$ are not collinear, and hence the linear subspace $\calT$ consisting
of all the operators $t \in \calL(U,V)$ such that $t(x_1) \in \Vect(y_1,y_3)$ and $t(x_2) \in \Vect(y_2,y_3)$
has codimension $2(\dim V-2)$ in $\calL(U,V)$. Therefore, $\codim \calS \geq 2 \dim V-4$.
As $\codim \calS \leq \dim V-1$ and $\dim V \geq 3$, we deduce that $\codim \calS =2 \dim V-4$,
which leads to $\calS=\calT$. In particular, $\calS$ is a linear subspace of $\calL(U,V)$, and hence Theorem \ref{maintheolin2}
yields that $F$ is local. This completes our inductive proof.
\end{proof}

\section{Quasi-range-compatible homomorphisms}\label{quasiRC}

\subsection{Statement of the main results}\label{quasiRCsection1}

In this section, we tackle quasi-range-compatible homomorphisms on large linear subspaces of operators.
The following theorem gives the optimal upper-bound on the codimension of $S$
for all quasi-range-compatible homomorphisms on $S$ to be local.

\begin{theo}\label{quasitheo1}
Let $U$ and $V$ be finite-dimensional vector spaces and $S$ be a linear subspace of $\calL(U,V)$.
Assume that $\codim S \leq \dim V-2$, and that $\codim S \leq \dim V-3$ if $\# \K=2$. \\
Then, every quasi-range-compatible homomorphism on $S$ is local.
\end{theo}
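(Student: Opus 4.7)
The plan is an induction on $\dim V$. The induction base corresponds to $\codim S = 0$, i.e.\ $S = \calL(U,V)$, occurring at $\dim V = 2$ in the general setting and at $\dim V = 3$ over $\F_2$. In the inductive step with $S \subsetneq \calL(U,V)$, I use Lemma \ref{quasiquotient} to pass to $V/\K y$ for carefully chosen lines $\K y$ that avoid $D$ and exploit the codimension drop $\codim(\calS \modu y) = \codim S - \dim S^\bot y$. The remaining case $S = \calL(U,V)$ is handled separately by upgrading $F$ to a genuinely range-compatible homomorphism and invoking Theorem \ref{maintheogroup}.

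For the case $S = \calL(U,V)$, the goal is to establish $F(s) \in \im s$ whenever $D \subset \im s$. When $\rk s \geq 2$, pick a nonzero $v$ in a complement of $D$ inside $\im s$ and $\ell \in V^\star$ with $\ell(d) = 1$ and $\ell(v) = 0$; the decomposition $s = s' + (s-s')$ with $s'(u) := \ell(s(u))(d+v)$ produces two operators whose ranges are contained in $\im s$ and miss $D$, so quasi-range-compatibility applied to each summand yields $F(s) \in \im s$. When $\rk s = 1$ with $\im s = D$, write $s = \varphi \otimes d$ for the rank-one operator $u \mapsto \varphi(u) d$; the additive identity $s = \varphi \otimes (d+v) - \varphi \otimes v$ places $F(s)$ inside $\K d + \K v$ for every $v \in V \setminus D$. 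If $\dim V \geq 3$, intersecting these $2$-dimensional subspaces over $v$ collapses to $D$, hence $F(s) \in D$. If $\dim V = 2$, this intersection is all of $V$, and one must instead exploit additivity directly: writing $F(s) = \beta d + \gamma v$ in a basis $(d,v)$, one extracts from quasi-range-compatibility applied to $\varphi \otimes (d+\lambda v)$ the relation $F(\lambda(\varphi \otimes v)) = (\beta\lambda - \gamma)v$ for each $\lambda \in \K \setminus \{0\}$; additivity applied to a pair of scalars $\lambda, \mu$ with $\lambda, \mu, \lambda+\mu$ all nonzero then forces $\gamma = 0$. Such a pair exists precisely when $\#\K \geq 3$, and this is what ultimately imposes the stricter codimension bound over $\F_2$.

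For the inductive step with $S \subsetneq \calL(U,V)$, set $W := \bigcap_{t \in S^\bot}\Ker t$, which is a proper subspace of $V$ since $S^\bot \neq \{0\}$. The key geometric input is the existence of three linearly independent vectors $y_1,y_2,y_3 \in V \setminus (W \cup D)$, equivalently the assertion that $V$ is not a union $W \cup D \cup E$ for any $2$-dimensional subspace $E$. Over a field with at least three elements and $\dim V \geq 3$, this follows from the classical fact that a vector space is never a union of three proper subspaces; over $\F_2$ with $\dim V \geq 4$, it follows from the cardinality count $|W|+|D|+|E| \leq 2^{n-1}+2+4 < 2^n$. The condition $y_i \notin W$ gives $\dim S^\bot y_i \geq 1$, so $\codim(\calS \modu y_i)$ in $\calL(U, V/\K y_i)$ drops by at least one and therefore meets the inductive hypothesis in either field regime. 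Lemma \ref{quasiquotient} then produces vectors $x_i \in U$ with $F(s) - s(x_i) \in \K y_i$ for all $s \in S$.

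If $x_i = x_j$ for some $i \neq j$, the linear independence of $y_i$ and $y_j$ forces $F(s) = s(x_i)$ for every $s \in S$. Otherwise $x_1, x_2, x_3$ are pairwise distinct, and the relations $s(x_i - x_j) \in \Vect(y_i, y_j)$ constrain $\calS$. If $x_1 - x_2$ and $x_1 - x_3$ are linearly independent, $\calS$ sits in a linear subspace of codimension $2\dim V - 4$, which exceeds the hypothesized bound for $\dim V \geq 3$. If they are collinear with some ratio $\mu \neq 0, 1$, intersecting the three constraints yields $s(x_1-x_2) \in \K y_1 \cap \Vect(y_2, y_3) = \{0\}$, so $\calS$ sits in a subspace of codimension $\dim V$, again a contradiction; this collinear subcase is vacuous over $\F_2$ since no valid $\mu$ exists. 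The step I expect to be the main obstacle is the base case $\dim V = 2$, where the intersection argument collapses and range-compatibility of $F$ must be pried out of the interplay between additivity and the $\K$-scaling of rank-one operators; this is precisely where the condition $\#\K \geq 3$ becomes indispensable.
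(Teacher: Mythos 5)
Your proof is correct, and it departs from the paper's argument in both of its main components. For the base case $S=\calL(U,V)$, the paper never upgrades $F$ to a range-compatible map: it first treats $\dim U=1$ (Lemma \ref{dimU=1lemma}) by showing that every vector of $Sx$ outside a small exceptional set is an eigenvector of the induced homomorphism with a common eigenvalue, the crux being that the complement of $\K y_1\cup D\cup(-y_1+D)$ generates the additive group $Sx$; it then splits $\Mat_{n,p}(\K)$ into $p$ copies of $\K^n$ and concludes with the Splitting Lemma (Corollary \ref{quasitotalspace}). Your alternative --- decomposing any $s$ with $D\subset \im s$ into a sum of two operators whose ranges lie in $\im s$ but avoid $D$, together with the additivity computation giving $F\bigl(\lambda(u\mapsto\varphi(u)v)\bigr)=(\beta\lambda-\gamma)v$ when $\dim V=2$ --- is a genuinely different, more self-contained route to the same conclusion via Theorem \ref{maintheogroup}, and it isolates the role of $\#\K\geq 3$ in essentially the same place (the existence of $\lambda,\mu$ with $\lambda$, $\mu$ and $\lambda+\mu$ all nonzero). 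In the inductive step the paper uses only two quotient directions $y_1,y_2$, taken from a basis avoiding $\Ker N\cup D$ for a single $N\in S^\bot\setminus\{0\}$; when $x_1\neq x_2$ it does not derive a contradiction but instead concludes that $S$ equals the codimension-$(\dim V-2)$ space $\calT$ exactly, recognises $\calT$ as $V_1\coprod\Mat_{n,p-1}(\K)$ in suitable bases, and finishes with Lemma \ref{dimU=1lemma} and the Splitting Lemma. Your three-vector variant trades that residual analysis for a covering argument (that $V$ is not a union $W\cup D\cup E$, valid since three proper subspaces cannot cover $V$ when $\#\K\geq 3$, and by counting over $\F_2$ when $\dim V\geq 4$) and a clean codimension contradiction; the cost is the slightly stronger geometric input, the gain is that no degenerate operator space has to be examined separately. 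Both arguments are sound and of comparable length.
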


The following examples show that the upper-bound on the codimension of $S$ is optimal
for homomorphisms as well as for linear maps.
Consider first the subspace $S:=\K \vee \Mat_{n-1,p-1}(\K)$ of $\Mat_{n,p}(\K)$, which has codimension $n-1$ in $\Mat_{n,p}(\K)$,
and the linear map
$$F : (m_{i,j}) \in S \longmapsto \begin{bmatrix}
0 \\
m_{1,1} \\
[0]_{(n-2) \times 1}
\end{bmatrix}.$$
The map $F$ is quasi-range-compatible with respect to the $1$-dimensional subspace $D:=\K \times \{0\}$ of $\K^n$:
indeed, if the range of some $M\in S$ does not include $D$, then the first column of $M$ equals zero, and hence $F(M)=0 \in \im M$.
However, it is easy to check that $F$ is non-local.

Assume now that $\# \K =2$ and consider the space $\calT:=\K^2 \vee \Mat_{n-2,p-1}(\K)$, which has codimension $n-2$ in $\Mat_{n,p}(\K)$,
and the linear map
$$F : M \in \calT \longmapsto \begin{bmatrix}
m_{1,1} \\
[0]_{(n-1) \times 1}
\end{bmatrix}.$$
Set $x:=\begin{bmatrix}
1 & 1 & [0]_{1 \times (n-2)}
\end{bmatrix}^T$ and $D:=\K x$. Then, $F$ is quasi-range-compatible with respect to $D$: indeed, if
the range of some $M\in \calT$ does not include $D$, then the first column of $M=(m_{i,j})$ equals either $\begin{bmatrix}
0 & m_{2,1} & [0]_{1 \times (n-2)}
\end{bmatrix}^T$, in which case $F(M)=0$, or $\begin{bmatrix}
m_{1,1} & [0]_{1 \times (n-1)}
\end{bmatrix}^T$, in which case $F(M)$ is the first column of $M$.
However, it is easy to check that $F$ is non-local.

In the following propositions, we generalize these examples:

\begin{prop}\label{quasidegenerate1}
Let $U$ and $V$ be finite-dimensional vector spaces and $S$ be a linear subspace of $\calL(U,V)$.
Assume that $\codim S \leq 2\dim V-3$, and that $\codim S \leq 2\dim V-4$ if $\# \K=2$.
Assume that there is a vector $x \in U$ such that $\dim S x=1$, and let $F :
S \rightarrow V$ be a homomorphism that is both non-local and quasi-range-compatible with respect to some
$1$-dimensional linear subspace $D$ of $V$.
Then:
\begin{enumerate}[(a)]
\item Either $F$ is the sum of a local map with a map of the form $s \mapsto \varphi(s(x))$, where
$\varphi$ is a non-linear endomorphism of $S x$;
\item Or $D=S x$ and $F$ is the sum of a local map with a map of the form
$s \mapsto \varphi(s(x))$, where $\varphi$ is a homomorphism from $D$ to $V$.
\end{enumerate}
\end{prop}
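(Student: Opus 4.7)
Let $y \in V \setminus \{0\}$ be such that $Sx = \K y$, let $\ell : S \to \K$ be the (nonzero) linear form characterized by $s(x) = \ell(s)y$, and set $S_0 := \Ker \ell$; fix also a complement $U'$ of $\K x$ in $U$, and let $\pi : V \to V/\K y$ denote the canonical projection. The first step is to show that $F|_{S_0}$ is local. Under the restriction embedding $S_0 \hookrightarrow \calL(U',V)$, a direct count gives $\codim_{\calL(U',V)} S_0 = \codim S + 1 - \dim V$, which by hypothesis is at most $\dim V - 2$ (respectively $\dim V - 3$ if $\# \K = 2$)---exactly the range of Theorem \ref{quasitheo1}. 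Applied to the quasi-range-compatible homomorphism $F|_{S_0}$, that theorem produces $x_0 \in U'$ with $F(s) = s(x_0)$ on $S_0$; subtracting the local map $s \mapsto s(x_0)$, which preserves both quasi-range-compatibility and non-locality, reduces us to the case $F|_{S_0} = 0$. Fixing now $\sigma \in S$ with $\ell(\sigma) = 1$ and writing each $s \in S$ as $\ell(s)\sigma + (s - \ell(s)\sigma)$ with the second summand in $S_0$, we get $F(s) = F(\ell(s)\sigma)$; the rule $\varphi(\lambda y) := F(\lambda \sigma)$ then defines a group homomorphism $\varphi : Sx \to V$ with $F(s) = \varphi(s(x))$ for all $s$.

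The case $D = Sx$ yields conclusion (b) at once. In the remaining case $D \neq Sx$, I claim that $\varphi(Sx) \subset Sx$; this will give conclusion (a), because a $\K$-linear endomorphism $\lambda y \mapsto c\lambda y$ would force $F$ to coincide with $s \mapsto s(x_0 + cx)$, contradicting non-locality, so $\varphi$ must be non-linear. To prove the claim, invoke Lemma \ref{quasiquotient}: $F$ descends to a quasi-range-compatible homomorphism $F \modu y$ on $S \modu y$ with $(F \modu y)(\pi \circ s) = \Phi(\ell(s))$, where $\Phi : \K \to V/\K y$ is the $\Z$-linear map $\lambda \mapsto \pi(\varphi(\lambda y))$; the claim amounts to $\Phi \equiv 0$. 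Well-definedness of $F \modu y$ already forces $\Phi$ to vanish on $\ell(K)$, where $K := \bigl\{s \in S : \im s \subset \K y\bigr\}$, so if $\ell(K) = \K$---i.e.\ if $S$ contains a rank-one operator with image exactly $\K y$---the claim is immediate. Otherwise $K \subset S_0$, $\ell$ descends to $\tilde \ell : S \modu y \to \K$, and $F \modu y = \Phi \circ \tilde\ell$.

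The remaining task is thus, in the case $K \subset S_0$, to rule out the existence of $\mu \in \K^*$ with $\Phi(\mu) \neq 0$. Supposing otherwise and setting $z := \varphi(\mu y)$, the map $F$ is constantly equal to $z$ on the affine set $\mu \sigma + S_0 \subset S$ and $\pi(z) \neq 0$; quasi-range-compatibility then forces $z \in \im s$ for every $s \in \mu\sigma + S_0$ with $D \not\subset \im s$. The contradiction will come from exhibiting a single $s \in \mu\sigma + S_0$ whose image omits $z$ and does not contain $D$. Producing such an $s$ is the main technical obstacle: after applying $\pi$ it becomes a dimension-counting problem in $\calL(U, V/\K y)$, where one must show that the affine subspace $\bigl\{\pi \circ s : s \in \mu\sigma + S_0\bigr\}$ is not contained in the union of the closed ``bad'' sets $\{s' : \pi(z) \in \im s'\}$ and $\{s' : \pi(D) \subset \im s'\}$. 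The codimension bound $\codim S \leq 2\dim V - 3$ (resp.\ $\leq 2\dim V - 4$) is precisely what is needed here, and the argument likely proceeds by a careful application of Lemma \ref{quasiRCrank1lemma} combined with an induction on $\dim V$ via Theorem \ref{quasitheo1}.
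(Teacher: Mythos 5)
Your first step is sound and coincides with the paper's: the codimension count for $S_0$ inside $\calL(U',V)$ is correct, Theorem \ref{quasitheo1} does apply to $F|_{S_0}$, and the reduction to $F(s)=\varphi(s(x))$ for a homomorphism $\varphi$ on $Sx$ is exactly the paper's opening move (your $S_0$ is the paper's space $\calT$ of operators vanishing at $x$). The dichotomy you then set up --- conclusion (b) is immediate when $D=Sx$, and otherwise one must show $\varphi(Sx)\subset Sx$, which forces $\varphi$ to be non-linear by non-locality --- is also the right one.

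The problem is that you never prove the one statement that carries all the weight, namely that $D\neq Sx$ forces $\im\varphi\subset Sx$. You reduce it to exhibiting a single $s\in\mu\sigma+S_0$ whose image avoids $z:=\varphi(\mu y)$ and does not contain $D$, note that the relevant ``bad'' sets are not subspaces, call this ``the main technical obstacle,'' and then only speculate that the argument ``likely proceeds'' via Lemma \ref{quasiRCrank1lemma} and an induction on $\dim V$. That is precisely where the hypothesis $\codim S\leq 2\dim V-3$ enters and it is the only delicate part of the proposition, so the proof is incomplete. For comparison, the paper does not hunt for a single witness operator. It first observes that $D\neq Sx$ together with quasi-range-compatibility forces $E_{1,1}\notin S$ (otherwise $\varphi(a\,x)\in\im(aE_{1,1})=Sx$ for all $a$, giving $\im\varphi\subset Sx$ outright); hence the ``delete the first column'' map $J$ is injective on $S$, Theorem \ref{quasitheo1} applies to $J(S)\modu Sx$, and $F\modu Sx$ is local. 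This produces $x'$ with $F(s)=s(x')$ modulo $Sx$, whence $\im\varphi\subset P:=\K s_1(x')+Sx$; then $S$ lies in the space $\calW$ of operators sending $x$ into $Sx$ and $x'$ into $P$, which has codimension at least $2\dim V-3$, so $S=\calW$ --- but $\calW$ contains $E_{1,1}$, a contradiction. Your intermediate observations (well-definedness of $F\modu y$ via Lemma \ref{quasiquotient}, the subgroup $\ell(K)$, the factorization $F\modu y=\Phi\circ\tilde\ell$) are correct but do not substitute for this argument.
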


Note that in case (a) $F$ is actually range-compatible (this is connected to case (c) of Theorem 1.6 in \cite{dSPRC1}).

\begin{prop}\label{quasidegenerate2}
Let $U$ and $V$ be finite-dimensional vector spaces, and $S$ be a linear subspace of $\calL(U,V)$,
where $\# \K=2$. Assume that $\codim S \leq 2\dim V-5$
and that there is a vector $x \in U$ such that $\dim S x=2$. Let $F :
S \rightarrow V$ be a homomorphism that is both non-local and quasi-range-compatible with respect to some
$1$-dimensional linear subspace $D$ of $V$.
Then:
\begin{enumerate}[(a)]
\item $D \subset S x$;
\item $F$ is the sum of a local map with a map of the form $s \mapsto \varphi(s(x))$, where
$\varphi$ is a rank $1$ projection of $S x$ whose eigenspaces differ from $D$.
\end{enumerate}
\end{prop}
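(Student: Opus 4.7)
My plan is to reduce modulo well-chosen lines inside $Sx$ and invoke Proposition~\ref{quasidegenerate1} in the quotient, using the $\F_2$-specific fact that the $2$-dimensional subspace $Sx$ contains exactly three nonzero vectors $y_1,y_2,y_3$, related by $y_3=y_1+y_2$. Set $W_0:=\bigcap_{t\in S^\bot}\Ker t$, so that $y\notin W_0$ iff $\dim S^\bot y\geq 1$. Call an index $i$ \emph{admissible} if $\K y_i\neq D$ and $y_i\notin W_0$; in that case Lemma~\ref{quasiquotient} yields a quasi-range-compatible homomorphism $F\modu y_i$ on $S\modu y_i$, and the codimension identity of Section~2.2 gives $\codim(S\modu y_i)\leq\codim S-1\leq 2(\dim V-1)-4$, while $(S\modu y_i)(x)$ is $1$-dimensional (being the projection of $Sx$). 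Proposition~\ref{quasidegenerate1} therefore applies to $F\modu y_i$ in $V/\K y_i$; its case~(a) is vacuous over $\F_2$ (all additive endomorphisms of an $\F_2$-space are $\F_2$-linear), so $F\modu y_i$ is either local, or of the form described in case~(b), the latter forcing $\pi(D)=\pi(Sx)$ in the quotient.

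For (a), assume $D\not\subset Sx$ for a contradiction. Then $\K y_i\neq D$ for every $i$, and $\pi(D)\neq\pi(Sx)$ in every quotient $V/\K y_i$ (since $y_i\in Sx$ and $D\not\subset Sx$ force $D+\K y_i\neq Sx$). Case~(b) of Proposition~\ref{quasidegenerate1} is therefore excluded, and every admissible $F\modu y_i$ must be local, producing $x_0^{(i)}\in U$ with $F(s)-s(x_0^{(i)})\in\K y_i$ for every $s\in S$. For two admissible indices $i\neq j$: if $x_0^{(i)}=x_0^{(j)}$ then $F(s)-s(x_0^{(i)})\in\K y_i\cap\K y_j=\{0\}$ and $F$ is local, contradicting the hypothesis; otherwise $s(x_0^{(i)}-x_0^{(j)})\in Sx$ for all $s\in S$. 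When all three indices are admissible and the three $x_0^{(i)}$ are pairwise distinct, the three nonzero differences $x_0^{(i)}-x_0^{(j)}$ form a set of pairwise distinct, hence pairwise $\F_2$-independent vectors spanning a $2$-dimensional subspace, and two resulting independent constraints $s(w),s(w')\in Sx$ impose codimension at least $2(\dim V-2)=2\dim V-4>2\dim V-5$, contradicting our assumption.

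For (b), write $D=\K u$ with $u\in Sx$ and pick $v\in Sx\setminus D$; the three nonzero vectors of $Sx$ are then $u,v,u+v$ and the two admissible lines are $\K v$ and $\K(u+v)$. For each such $y$, $D+\K y=Sx$ gives $\pi(D)=\pi(Sx)$ in $V/\K y$, so Proposition~\ref{quasidegenerate1} yields a decomposition $F\modu y=(s\mapsto s(x_0^{(y)}))+\overline\varphi_y\circ(s\mapsto s(x))$ in the quotient. The pair-difference trick of (a) again coerces $x_0^{(v)}$ and $x_0^{(u+v)}$ to coincide up to a harmless adjustment, producing a single $x_0\in U$ such that $F(s)-s(x_0)$ depends only on $s(x)$; writing $F(s)=s(x_0)+\varphi(s(x))$ for some $\F_2$-linear $\varphi:Sx\to V$, it remains to identify $\varphi$. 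For each $w\in\{v,u+v\}$, quasi-range-compatibility applied to those $s\in S$ with $s(x)=w$ and $\im s$ avoiding $D$ (such $s$ are produced by intersecting $S$ with spaces of the form $\{s\in\calL(U,V):\im s\subset H\}$ for hyperplanes $H$ of $V$ through $\K w$ but not through $D$, which is feasible under the codimension bound) forces $\varphi(w)$ to lie in the intersection of all such ranges; by duality this intersection collapses to $\K w$, so $\varphi(w)\in\{0,w\}$. Linearity then determines $\varphi(u)=\varphi(v)+\varphi(u+v)$, and excluding $\varphi=0$ and $\varphi=\mathrm{id}_{Sx}$ (both of which would make $F$ local) leaves exactly the two rank-one projections of $Sx$ whose image and kernel lie in $\{\K v,\K(u+v)\}$; in each, both eigenspaces differ from $D=\K u$, as required.

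The hardest part will be the degenerate situation $Sx\cap W_0\neq\{0\}$, in which fewer than three admissible indices are available: ruling out $Sx\subset W_0$ and handling the one-admissible-index-removed sub-case requires either a refined dimension count on $S^\bot$, or an inductive appeal to Proposition~\ref{quasidegenerate2} itself after quotienting by a line $\K y\not\subset Sx$ with $\K y\neq D$ and $\dim S^\bot y\geq 2$, both of which lie very tight against the codimension bound $\codim S\leq 2\dim V-5$.
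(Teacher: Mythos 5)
Your proposal has a genuine gap, and it is exactly the one you flag in your last paragraph: the case where the nonzero vectors of $Sx$ fail to be ``admissible'', i.e.\ where $\dim S^\bot y=0$ for $y\in Sx$. This is not a marginal edge case to be patched later --- it contains the most basic instance of the proposition. Take $S:=\{s\in\calL(U,V) : s(x)\in P\}$ for a fixed $2$-dimensional $P$; then $\codim S=\dim V-2\leq 2\dim V-5$, every operator of $S^\bot$ kills $P$, so $P=Sx\subset W_0$ and \emph{no} index is admissible, yet $S$ does carry non-local quasi-range-compatible maps (the rank~$1$ projections of $P$), so the proposition has real content there. Your entire mechanism of quotienting by lines inside $Sx$ and invoking Proposition~\ref{quasidegenerate1} produces nothing for this $S$, and your part~(a) contradiction also needs all three indices admissible (with only two, you get a single constraint $s(w)\in Sx$, costing codimension $\dim V-2$, which is compatible with $\codim S\leq 2\dim V-5$). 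The ``refined dimension count or inductive appeal'' you gesture at is precisely the missing proof.

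The paper's argument is structured to make this degenerate situation the \emph{easy} case, not the hard one, via two ideas absent from your proposal. First, it reduces at the outset to $F(s)=\varphi(s(x))$ for a linear $\varphi:Sx\to V$, by applying Theorem~\ref{quasitheo1} to the subspace $\{s\in S: s(x)=0\}$ (written as $\{0\}\coprod J(\calT)$) and subtracting the resulting local map; after this, everything is a statement about the single map $\varphi$ on the $2$-dimensional space $P=Sx$, and one only needs to show that each $y\in P\setminus D$ is an eigenvector of $\varphi$. Second, for a given $y\in P\setminus D$, the dichotomy is whether $S$ contains the rank-one operator $E_{1,1}$ sending $x$ to $y$: if it does (which is exactly the non-admissible case, since $S^\bot y=0$ forces $S$ to contain all rank-one operators with range $\K y$), then quasi-range-compatibility applied directly to $E_{1,1}$ gives $\varphi(y)=F(E_{1,1})\in\im E_{1,1}=\K y$ in one line, with no quotient at all; if it does not, then $\dim S^\bot y\geq 1$, the quotient by $\K y$ satisfies the hypotheses of Proposition~\ref{quasidegenerate1}, and the resulting auxiliary vector $x'$ is forced to be a multiple of $x$ by a codimension count against the space $\{t: t(x)\in P,\ t(x')\in\K y+\im(\varphi-\psi)\}$ (codimension at least $2\dim V-5$, whence $S$ would contain $E_{1,1}$ after all --- a contradiction). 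The final $\F_2$ bookkeeping (three nonzero vectors, all eigenvectors forces a scalar, hence $D\subset P$ and $\varphi$ is a rank-one projection with eigenspaces off $D$) matches the end of your part~(b). So your plan is salvageable in outline, but without the initial reduction to $\varphi(s(x))$ and without the direct $E_{1,1}$ argument for the non-admissible lines, the proof does not go through.
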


\begin{Def}
Let $U$ and $V$ be finite-dimensional vector spaces, $S$ be a linear subspace of $\calL(U,V)$, and $i \in \{1,2\}$.
We say that $S$ is \textbf{special of type $i$} when there exists $x \in U$ such that $\dim S x=i$.
\end{Def}

\begin{theo}\label{quasitheo2}
Let $U$ and $V$ be finite-dimensional vector spaces, and $S$ be a linear subspace of $\calL(U,V)$.
\begin{enumerate}[(a)]
\item If $\#\K>3$ and $\codim S \leq 2 \dim V-3$ and $\dim V=2$, then either $S$ is special of type $1$ or
every quasi-range-compatible linear map on $S$ is local.
\item If $\#\K>2$ and $\codim S \leq 2 \dim V-4$, then either $S$ is special of type $1$ or
every quasi-range-compatible homomorphism on $S$ is local.
\item If $\# \K=2$ and $\codim S \leq 2 \dim V-6$, then either $S$ is special of type $1$ or $2$,
or every quasi-range-compatible homomorphism on $S$ is local.
\end{enumerate}
\end{theo}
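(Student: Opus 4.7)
The plan is to prove the contrapositive by induction on $\dim V$, treating the three cases in parallel: supposing $F : S \to V$ is a quasi-range-compatible homomorphism with respect to some $1$-dimensional subspace $D \subset V$ that is \emph{non-local}, I will show that $S$ must be special of the type allowed in each case. When $\codim S \leq \dim V - 2$ (respectively $\dim V - 3$ if $\#\K = 2$), Theorem \ref{quasitheo1} already forces $F$ to be local, so I can restrict to the complementary codimension range.

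The principal tool is a quotient reduction. For each nonzero $y \in V$ with $\K y \neq D$, Lemma \ref{quasiquotient} supplies a quasi-range-compatible homomorphism $F \modu y : S \modu y \to V/\K y$ with respect to $\pi(D)$; the orthogonality identity from Section 2.2 gives $\codim(S \modu y) = \codim S - \dim S^\bot y$. Whenever $\dim S^\bot y$ is large enough to place $\codim(S \modu y)$ within the scope of either Theorem \ref{quasitheo1} or the inductive hypothesis applied in $V/\K y$, I can conclude either that $F \modu y$ is local or that $S \modu y$ is special. The strategy is then to produce two linearly independent vectors $y_1, y_2 \in V \setminus D$ for which both $F \modu y_i$ are local, yielding $x_1, x_2 \in U$ with $F(s) - s(x_i) \in \K y_i$ for all $s \in S$. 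If $x_1 = x_2$, then $F$ is local, a contradiction; otherwise $S(x_1 - x_2) \subseteq \Vect(y_1, y_2)$, so $\dim S(x_1 - x_2) \leq 2$ and $S$ is special of type at most $2$ (discarding the degenerate subcase where $x_1 - x_2$ lies in the common kernel of $S$, which is handled by passing to a quotient of $U$).

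Once $S$ is known to be special, I will invoke Propositions \ref{quasidegenerate1} and \ref{quasidegenerate2} to pin down the form of $F$. In cases (a) and (b) the conclusion only allows type $1$, so I have to exclude a genuine type $2$ outcome; this is where $\#\K > 2$ enters, typically by a scalar-multiplication argument that either promotes a type $2$ witness to a type $1$ one, or directly shows that Proposition \ref{quasidegenerate2}'s classification cannot produce a non-local $F$ over such fields. In case (c), both types are permitted in the conclusion, and Proposition \ref{quasidegenerate2}'s constraint $D \subseteq Sx$ helps discriminate between them.

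The hardest part will be the configurations where the quotient construction fails to supply two independent good $y_i$'s. In the subcase where some $F \modu y$ is non-local, the inductive hypothesis yields $S \modu y$ special, which then lifts via the elementary bound $\dim Sx \leq \dim(S \modu y)x + 1$ to $S$ being special of an appropriate type. In the subcase where $\dim S^\bot y$ is too small for too many $y$, a direct dimension count on $S^\bot$ forces $S$ itself into a special form. I expect the main obstacle to be case (c) over $\F_2$, where the scarcity of nonzero vectors in $V$ severely restricts the generic-choice arguments, and where both the type $1$ and type $2$ possibilities of Propositions \ref{quasidegenerate1} and \ref{quasidegenerate2} must be tracked simultaneously; the combinatorial bookkeeping there will be the most delicate part of the whole proof.
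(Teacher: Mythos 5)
Your overall architecture (induction on $\dim V$, quotienting by lines $\K y \neq D$, the $x_1 \neq x_2$ dichotomy) is the same as the paper's, but two of your steps conceal genuine gaps. The first is the type-$2$ fallout in case (b). Both of your mechanisms for producing specialness --- the conclusion $S(x_1-x_2) \subseteq \Vect(y_1,y_2)$, and the lift $\dim Sx \leq \dim\bigl((S \modu y)x\bigr)+1$ when $S \modu y$ has type $1$ --- only give ``special of type $1$ \emph{or} $2$''. For case (b) you must then exclude a genuine type-$2$ witness, and your proposed fix does not exist in the paper: Proposition \ref{quasidegenerate2} is stated and proved only for $\# \K=2$, and there is no classification of quasi-range-compatible homomorphisms on type-$2$ spaces over larger fields for you to invoke; nor is there an evident ``scalar-multiplication'' promotion of a type-$2$ witness to a type-$1$ one. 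The paper circumvents this entirely: for (b) it proves that there cannot exist \emph{three} linearly independent $y_i \in V \setminus D$ with every $F \modu y_i$ local (Claim \ref{quasi3vectors}), so the directions with local quotients lie in a plane $P$; it then finds two \emph{special} adapted vectors (ones with $F \modu y_i$ non-local, hence $S \modu y_i$ of type $1$ by induction), shows that $S x_1$ and $S x_2$ are both $2$-dimensional with $x_1,x_2$ independent, forces $S=\calV_1 \coprod \calV_2 \coprod \Mat_{n,p-2}(\K)$ by a codimension count, and contradicts non-locality via the splitting lemma. Nothing in your sketch reproduces this endgame, and without it case (b) does not close.

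The second gap is how you locate the quotient directions. The vectors $y$ with $\dim S^\bot y \geq 2$ --- which are what make $\codim(S \modu y)$ small enough for Theorem \ref{quasitheo1} or the induction hypothesis --- form the non-vanishing locus of a quadratic form (Lemma \ref{quasiexistquadform}), not the complement of a linear subspace, so ``a direct dimension count on $S^\bot$'' cannot produce such a $y$ outside $D$, outside the plane $P$ above, and outside $Sx_1$ simultaneously; the paper needs Lemmas \ref{quasipropquadform} and \ref{quadformlemmaF2} for precisely this, and over $\F_2$ these covering statements are the delicate point you correctly anticipate but do not resolve. Finally, case (a) ($\dim V=2$, $\codim S=1$) is not reachable by your induction at all, since Theorem \ref{quasitheo1} gives nothing there; the paper treats it by a separate computation on $\Mats_2(\K) \coprod \Mat_{2,p-2}(\K)$ in which $\# \K>3$ enters through a degree-$3$ polynomial identity (Lemma \ref{symlemma}), an ingredient absent from your plan.
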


Let us show that the upper-bounds on $\codim S$ from this last theorem are optimal
and that they cannot be improved by considering linear maps instead of homomorphisms.
First of all, let us consider the $3$-dimensional space $S$ of all $3$ by $2$ matrices of the form
$$\begin{bmatrix}
0 & -a \\
a & 0 \\
b & c
\end{bmatrix} \quad \text{with $(a,b,c)\in \K^3$.}$$
The map
$$F : \begin{bmatrix}
0 & -a \\
a & 0 \\
b & c
\end{bmatrix} \in S \mapsto \begin{bmatrix}
-b \\
-c \\
0
\end{bmatrix}$$
is obviously linear and non-local. We claim however that it is quasi-range-compatible with respect to $D:=\K e_3$,
where $e_3:=\begin{bmatrix}
0 & 0 & 1
\end{bmatrix}^T$. To see this, we fix a non-zero matrix $M=\begin{bmatrix}
0 & -a \\
a & 0 \\
b & c
\end{bmatrix}$ in $S$.
Note that the $3$ by $3$ matrix $\begin{bmatrix}
M & F(M)
\end{bmatrix}$ is alternating, and hence its rank equals at most $2$.
If $\rk M=2$, then we deduce that $F(M) \in \im M$.
Assume now that $\rk M=1$. Then, we must have $a=0$ (judging from the upper $2 \times 2$ submatrix),
and hence $\im M=D$. Thus, we have shown that $F(M) \in \im M$ whenever $D \not\subset \im M$
(the case of the zero matrix being trivial). As claimed, $F$ is quasi-range-compatible.
For all integers $n \geq 3$ and $p \geq 2$, we deduce from the splitting lemma
that there is a non-local quasi-range-compatible linear map on $S \vee \Mat_{n-3,p-2}(\K)$.
Yet $S \vee \Mat_{n-3,p-2}(\K)$ has codimension $2n-3$ in $\Mat_{n,p}(\K)$,
and it is obvious that it is not of special type $1$ (one checks that $\dim S X \geq 2$ for all $X \in \K^2 \setminus \{0\}$,
leading to $\dim (S \vee \Mat_{n-3,p-2}(\K))X \geq 2$ for all $X \in \K^p \setminus \{0\}$).
Therefore, the upper bound from point (b) of Theorem \ref{quasitheo2} is optimal if $\dim V>2$.

If $\# \K=3$, the following example shows that the upper-bound from point (b) in Theorem \ref{quasitheo2} is optimal even if $\dim V=2$.
 The space $\Mats_2(\K)$ of all $2$ by $2$ symmetric matrices with entries in $\K$, seen as a linear subspace of $\calL(\K^2,\K^2)$, does not have special type 1.
Consider the linear map
$$F : \begin{bmatrix}
a & b \\
b & c
\end{bmatrix} \in \Mats_2(\K) \longmapsto \begin{bmatrix}
c-a \\
0
\end{bmatrix}.$$
Obviously, $F$ is non-local.
Let $M=\begin{bmatrix}
a & b \\
b & c
\end{bmatrix}$ be a rank $1$ symmetric matrix such that $\im M \neq \{0\} \times \K$.
As $M$ is singular we have $ac=b^2$. If $b=0$ then $F(M) \in \K \times \{0\}=\im M$.
Otherwise $ac=1$ and hence $a$ is non-zero. Since $\# \K=3$ this yields $c=a^{-1}=a$,
and hence $F(M)=0$. In any case, we see that $F(M) \in \im M$.
We conclude that $F$ is quasi-range-compatible (with respect to $\{0\} \times \K$).
As before, for all $p \geq 2$, we can extend this counter-example to obtain a non-local quasi-range-compatible linear map on
some linear hyperplane of $\Mat_{2,p}(\K)$ that does not have special type 1.

In the case when $\dim V=2$, assume that there is a $2$-dimensional field extension $\L$ of $\K$.
We can naturally embed $\L$ as a linear subspace of $\calL_\K(\L)$, and consider an arbitrary non-zero linear form
$\varphi : \L \rightarrow \K$. It is obvious that $\varphi$ is range-compatible (since every non-zero scalar of $\L$
is identified with an automorphism of the $\K$-vector space $\L$) and non-local.
Conversely, it is possible to show that if $\dim V=2$ and if there exists a linear subspace $S$ with codimension $2$
in $\calL(U,V)$ that does not have special type 1 and on which there exists a non-local quasi-range-compatible linear map,
then there exists a $2$-dimensional field extension of $\K$.

Assume finally that $\# \K=2$.
Consider the space
$$\calT:=\Biggl\{\begin{bmatrix}
a & b \\
b & c \\
e & f
\end{bmatrix}\mid (a,b,c,e,f)\in \K^5\Biggr\} \subset \Mat_{3,2}(\K)$$
and the mapping
$$F : \begin{bmatrix}
a & b \\
b & c \\
e & f
\end{bmatrix} \in \calT \mapsto \begin{bmatrix}
a+b \\
0 \\
0
\end{bmatrix}.$$
Obviously, $F$ is linear but non-local.
We claim however that $F$ is quasi-range-compatible with respect to the $1$-dimensional subspace $D:=\K \begin{bmatrix}
1 & 0 & 1
\end{bmatrix}^T$ of $\K^3$.
Let $M = \begin{bmatrix}
a & b \\
b & c \\
e & f
\end{bmatrix} \in \calT$. Assume that $F(M) \not\in \im M$ and that $D \not\subset \im M$.
Let us write $S(M):=\begin{bmatrix}
a & b \\
b & c
\end{bmatrix}$.
Note that $a+b=1$ as $F(M) \neq 0$.
\begin{itemize}
\item If $S(M)$ has rank $1$, then with $a+b=1$, the only possibility is that $a=1$ and $b=c=0$;
Then, $e=1$ and $f=0$ since otherwise $F(M) \in \im M$; then, $D=\im M$, contradicting our assumptions.
\item If $S(M)$ has rank $2$, then so does $M$; our assumptions then lead to
$$\begin{vmatrix}
a & b & 1\\
b & c & 0 \\
e & f & 0
\end{vmatrix}=1 \quad \text{and} \quad \begin{vmatrix}
a & b & 1\\
b & c & 0 \\
e & f & 1
\end{vmatrix}=1;$$
developing both determinants along the last column leads to $\det S(M)=0$, contradiction the assumption that $\rk S(M)=2$.
\end{itemize}
Thus, $F$ is quasi-range-compatible with respect to $D$.

As above, for all integers $n \geq 3$ and $p \geq 2$, we deduce that there is a non-local quasi-range-compatible linear map on
$\calT \vee \Mat_{n-3,p-2}(\K)$, and we note that $\calT \vee \Mat_{n-3,p-2}(\K)$ has codimension $2n-5$ in $\Mat_{n,p}(\K)$
and that it is neither of special type 1 nor 2 (one checks that $\dim \calT X=3$ for all $X \in \K^2\setminus \{0\}$, leading to $\dim(\calT \vee \Mat_{n-3,p-2}(\K))X \geq 3$
for all $X \in \K^p\setminus \{0\}$).

\vskip 3mm
The rest of this part is organized as follows.
In Section \ref{proofquasitheo1}, we prove Theorem \ref{quasitheo1}.
The result of this theorem is then used, in Section \ref{proofdegenerate},
to derive Propositions \ref{quasidegenerate1} and \ref{quasidegenerate2}.
The remainder of the section consists of the proof of Theorem \ref{quasitheo2}, which is split into four parts:
first, we prove point (a) (Section \ref{proofquasitheo2dim2}); then,
in Section \ref{quasibasiclemmas}, we prove a few basic lemmas that are used in the proof of points (b) and (c),
and finally we perform an inductive proof in Section \ref{proofquasitheo2notF2} for point (b), and in Section
\ref{proofquasitheo2F2} for point (c).

\subsection{Proof of Theorem \ref{quasitheo1}}\label{proofquasitheo1}

In order to prove Theorem \ref{quasitheo1}, we start with the special case when $\dim U=1$.

\begin{lemma}\label{dimU=1lemma}
Let $S$ be a linear subspace of $\calL(U,V)$, where $U$ is a $1$-dimensional vector space and $V$ is an arbitrary vector space.
Assume that $\dim S \geq 2$ if $\# \K>2$, and $\dim S \geq 3$ if $\# \K=2$. \\
Then, every quasi-range-compatible homomorphism on $S$ is local.
\end{lemma}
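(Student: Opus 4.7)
My plan is to translate the statement into one about additive maps on a subspace of $V$, using the fact that $\dim U = 1$. Fix a basis $u$ of $U$; the evaluation map $s \mapsto s(u)$ is a linear isomorphism $\calL(U,V) \xrightarrow{\sim} V$ that identifies $S$ with a subspace $W \subset V$ of the same dimension, turns $F$ into an additive map $f : W \to V$, and turns the locality of $F$ into $f$ being a scalar multiplication $w \mapsto \lambda w$. The quasi-range-compatibility of $F$ with respect to $D$ then reads: $f(w) \in \K w$ for every $w \in W \setminus D$ (the case $w=0$ being automatic from $f(0)=0$). I would split the argument according to whether $D \subset W$.

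If $D \not\subset W$, then $W \cap D = \{0\}$ and $f$ is actually range-compatible on $W$; since $\dim U = 1$, the hypothesis $\dim S \geq 2$ is equivalent to $\codim S \leq \dim V - 2$, so Theorem~\ref{maintheogroup} directly yields that $F$ is local. If $D \subset W$ and $\dim W \geq 3$, I choose a linear complement $W'$ of $D$ in $W$ (so $\dim W' \geq 2$ and $W' \cap D = \{0\}$); the restriction $f|_{W'}$ is range-compatible on $W'$, so by the same reasoning $f|_{W'} = \lambda\,\id_{W'}$ for some $\lambda \in \K$. To extend scalarity to $D$: for $d \in D$ and two linearly independent $w'_1, w'_2 \in W'$, applying the constraint at $w'_i + d \in W \setminus D$ gives scalars $\alpha_i$ with $\alpha_i(w'_i + d) = \lambda w'_i + f(d)$, i.e.\ $f(d) = (\alpha_i - \lambda) w'_i + \alpha_i d$; equating the two resulting expressions for $f(d)$ and using the linear independence of $\{w'_1, w'_2, d\}$ in $V$ forces $\alpha_1 = \alpha_2 = \lambda$ and $f(d) = \lambda d$.

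The remaining case $D \subset W$ with $\dim W = 2$ is the main obstacle: here no $2$-dimensional complement of $D$ is available inside $W$, and the hypothesis $\#\K > 2$ (which in this case replaces $\dim W \geq 3$) has to be used explicitly. I would write $W = \K d \oplus \K w_0$ with $d \in D \setminus \{0\}$ and $w_0 \in W \setminus D$. Applying the constraint to $bw_0$ ($b \neq 0$) yields an additive map $\psi : \K \to \K$ with $f(bw_0) = \psi(b)\,w_0$, and applying it to $ad + bw_0$ ($b \neq 0$) gives a scalar $\gamma(a,b)$ such that
\[
f(ad) = \gamma(a,b)\,ad + \bigl(\gamma(a,b)\,b - \psi(b)\bigr) w_0,
\]
an expression that must be independent of $b$ for each fixed $a$. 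For $a \neq 0$, matching the $d$-coefficient shows $\gamma(a,b) =: \gamma(a)$ is independent of $b$, and matching the $w_0$-coefficient then gives $\psi(b) = \gamma(a)\,b + c(a)$ for some constant $c(a)$. The additivity of $\psi$ applied to a pair $b_1, b_2 \in \K \setminus \{0\}$ with $b_1 + b_2 \neq 0$ (such a pair exists precisely because $\#\K > 2$) forces $c(a) = 0$, hence $\gamma(a) = \psi(1) =: \lambda$ is independent of $a$. Therefore $f(ad) = \lambda\,ad$ and $f(bw_0) = \lambda\,bw_0$, which gives $f = \lambda\,\id_W$ and shows that $F$ is local.
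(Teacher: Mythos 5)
Your proof is correct, but it follows a genuinely different route from the paper's. The paper's argument is self-contained: after the same identification of $S$ with $Sx\subset V$, it attaches to each $y\in Sx\setminus D$ the scalar $\lambda_y$ with $G(y)=\lambda_y y$, shows that $\lambda_{y_1}=\lambda_{y_2}=\lambda_{y_1+y_2}$ for any independent pair with $y_1,y_2,y_1+y_2\notin D$, and then reduces everything to a group-generation claim: that $Sx\setminus(\K y_1\cup D\cup(-y_1+D))$ generates the additive group $Sx$. The dichotomy $\dim Sx\geq 3$ versus ($\dim Sx=2$ and $\#\K>2$) enters only there. You instead split on whether $D\subset W$, outsource the bulk of the work to the previously known Theorem \ref{maintheogroup} (applied to $W$ itself when $D\cap W=\{0\}$, or to a complement $W'$ of $D$ in $W$, on which the map is genuinely range-compatible and where $\codim W'\leq \dim V-2$ still holds), recover the value on $D$ by a short linear-independence computation, and treat the residual case $\dim W=2$, $\#\K>2$ by explicit coordinates. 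Both approaches are sound and non-circular (Theorem \ref{maintheogroup} is an external result, not proved via this lemma). What the paper's version buys is uniformity and independence from the earlier classification theorems; what yours buys is brevity wherever Theorem \ref{maintheogroup} applies, at the cost of the separate hands-on two-dimensional case --- where your use of a pair $b_1,b_2\in\K\setminus\{0\}$ with $b_1+b_2\neq 0$ is exactly the point at which the paper also needs $\#\K>2$.
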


\begin{proof}
We choose a non-zero vector $x \in U$. Let $F : S \rightarrow V$ be a homomorphism that is quasi-range-compatible with respect to
some $1$-dimensional subspace $D$ of $V$. Consider the isomorphism $\varphi : s \in S \mapsto s(x) \in S x$,
and set $G:=F \circ \varphi^{-1}$, which is a homomorphism from $S x$ to $V$.
For all $y \in S x \setminus D$, we have a uniquely-defined scalar $\lambda_y$ such that $G(y)=\lambda_y\, y$.
Let $(y_1,y_2)$ be a linearly independent pair of vectors of $S x$, and assume
that none of the vectors $y_1,y_2$ and $y_3:=y_1+y_2$ belongs to $D$.
Then,
$$\lambda_{y_3}\, y_1+\lambda_{y_3}\, y_2=\lambda_{y_3}\,y_3=G(y_3)=G(y_1)+G(y_2)=\lambda_{y_1}\, y_1+\lambda_{y_2}\, y_2,$$
which, as $y_1$ and $y_2$ are linearly independent, leads to
$$\lambda_{y_1}=\lambda_{y_3}=\lambda_{y_2.}$$
Now, we fix $y_1 \in S x \setminus D$ and we set $\alpha:=\lambda_{y_1}$.
We wish to prove that $G =\alpha\,\id$. Yet, we have just seen that both morphisms $G$ and $\alpha\,\id$
coincide on $S x \setminus (\K y_1 \cup D \cup (-y_1+D))$. In order to conclude, it remains to prove that
$S x \setminus (\K y_1 \cup D \cup (-y_1+D))$ generates the additive group $S x$.
To do so, we distinguish between two cases.

\vskip 3mm
\noindent \textbf{Case 1:} $\dim S x \geq 3$. \\
Then, $P:=\Vect(y_1)+D$ is a proper subgroup of $S x$, and hence $S x \setminus P$ generates $S x$.
Since $P$ includes $\K y_1 \cup D \cup (-y_1+D)$, the claimed result is proven.

\vskip 3mm
\noindent \textbf{Case 2:} $\dim S x=2$ and $\# \K>2$. \\
Since $\# \K>2$, we can pick $1$-dimensional linear subspaces $D_2$ and $D_3$ of $S x$
such that $\Vect(y_1)$, $D_2$, $D_3$ and $D$ are pairwise distinct.
Then, $D_2 \cup D_3$ generates $S x$. Let $i \in \{2,3\}$. Then, $D_i \cap (\Vect(y_1) \cup D \cup (-y_1+D))$
contains at most one non-zero vector.
Yet, the complement of a proper subgroup of $D_i$ contains at least two non-zero vectors because $\# \K>2$
(if $\# \K=3$ this is deduced from the fact that the sole proper subgroup of $D_i$ is $\{0\}$).
Hence, $D_i$ is generated by $D_i \setminus (\Vect(y_1) \cup D \cup (-y_1+D))$.
We conclude that $S x$ is generated by $S x \setminus (\K y_1 \cup D \cup (-y_1+D))$, as claimed.

\vskip 3mm
Hence, in any case, we obtain that $G=\alpha\,\id_E$.
It follows that $F : s \mapsto s(\alpha x)$, and hence $F$ is local.
\end{proof}

\begin{cor}\label{quasitotalspace}
Let $U$ and $V$ be finite-dimensional vector spaces, with $\dim V \geq 2$ if $\# \K>2$ and $\dim V \geq 3$ otherwise.
Then, every quasi-range-compatible homomorphism on $\calL(U,V)$ is local.
\end{cor}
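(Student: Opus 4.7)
The approach is to reduce to the case $\dim U = 1$, which is handled by Lemma~\ref{dimU=1lemma}, via a column-by-column application of the Splitting Lemma. After fixing bases of $U$ and $V$, $F$ is represented by a homomorphism $G : \Mat_{n,p}(\K) \rightarrow \K^n$, where $n = \dim V$ and $p = \dim U$. The case $p=0$ is trivial, so assume $p \geq 1$.

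Write $\Mat_{n,p}(\K) = \Mat_{n,1}(\K) \coprod \cdots \coprod \Mat_{n,1}(\K)$ with $p$ factors, one per column. Iterating part~(a) of Lemma~\ref{splittinglemma} decomposes $G$ as $g_1 \coprod \cdots \coprod g_p$ for some homomorphisms $g_i : \Mat_{n,1}(\K) \rightarrow \K^n$, and iterating part~(d) then shows that each $g_i$ is quasi-range-compatible with respect to the same $1$-dimensional subspace $D$ of $\K^n$ as $G$.

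Each $g_i$ is a quasi-range-compatible homomorphism on $\Mat_{n,1}(\K) \simeq \calL(\K,V)$, a situation with $\dim U' = 1$. The hypotheses of the corollary---$n \geq 2$ if $\# \K > 2$, and $n \geq 3$ if $\# \K = 2$---match exactly the numerical requirement of Lemma~\ref{dimU=1lemma}, so each $g_i$ is local, meaning $g_i(A) = \lambda_i\, A$ for some $\lambda_i \in \K$. Reassembling gives $G(M) = \sum_{i=1}^p \lambda_i\, C_i(M) = M x$ with $x := (\lambda_1,\dots,\lambda_p)^T$; pulling back through the bases shows $F(s) = s(x_0)$ for the vector $x_0 \in U$ whose coordinates in the chosen basis are $(\lambda_1,\dots,\lambda_p)$. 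Hence $F$ is local.

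The argument is essentially a clean reduction, and there is no serious obstacle. The one modest point that must be noticed is that part~(d) of the Splitting Lemma transports quasi-range-compatibility to each column factor (with respect to a common $D$), which is precisely what enables Lemma~\ref{dimU=1lemma} to finish the job uniformly across the columns.
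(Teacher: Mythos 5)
Your proof is correct and follows essentially the same route as the paper: the paper likewise splits $\Mat_{n,p}(\K)$ into $p$ copies of $\K^n$, invokes Lemma~\ref{dimU=1lemma} for the one-column case, and reassembles via the Splitting Lemma (using part (c) to conclude locality rather than writing out the coordinates explicitly). No substantive difference.
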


\begin{proof}
We apply the splitting principle. Set $p:=\dim U$ and $n:=\dim V$.
Then, we need to prove that every quasi-range-compatible homomorphism from $\Mat_{n,p}(\K)$ to $\K^n$ is local.
We split $\Mat_{n,p}(\K)=\K^n \coprod \cdots \coprod \K^n$ (with $p$ copies of $\K^n$).
Lemma \ref{dimU=1lemma} shows that every quasi-range-compatible homomorphism on $\K^n$ is local.
Applying the splitting lemma, we obtain by induction on $p$ that every quasi-range-compatible homomorphism on
$\Mat_{n,p}(\K)$ is local.
\end{proof}

From there, we can prove Theorem \ref{quasitheo1}.

\begin{proof}[Proof of Theorem \ref{quasitheo1}]
We prove the result by induction on $\dim V$.
The case $\dim V \leq 1$ is vacuous.
Assume now that $\dim V\geq 2$.
If $S^\bot=\{0\}$ then $S=\calL(U,V)$ and hence the result follows directly from Corollary \ref{quasitotalspace}.
Assume now that there is a non-zero matrix $N \in S^\bot \setminus \{0\}$.
Set $V_0:=\Ker N$. Let $F : S \rightarrow V$ be a homomorphism that is quasi-range-compatible with respect to some $1$-dimensional linear subspace
$D$ of $V$. By Lemma 2.5 of \cite{dSPfeweigenvalues}, there is a basis $(y_1,\dots,y_n)$ of
$V$ in which no vector belongs to $V_0 \cup D$ (indeed, if $\# \K=2$ our assumptions on $\codim S$ imply that $\dim V \geq 3$).
Then, for all $i \in \lcro 1,n\rcro$, as $N y_i \neq 0$ we see that
$$\codim (S \modu y_i)=\codim S -\dim S^\bot y_i \leq \codim S-1.$$
On the other hand $y_i \not\in D$. Thus, $F \modu y_i$ is well-defined, and by induction it is local.
In particular, this yields vectors $x_1,x_2$ in $U$ such that
$$\forall s \in S, \quad F(s)=s(x_1)\mod \K y_1 \quad \text{and} \quad F(s)=s(x_2)\mod \K y_2.$$
Assume that $x_1=x_2$. Then, $F(s)=s(x_1)$ for all $s \in S$, and hence $F$ is local. \\
Assume finally that $x_1-x_2 \neq 0$. Then, $S$ is included in the linear subspace $\calT$ of $\calL(U,V)$ consisting of all the operators
$t \in \calL(U,V)$ such that $t(x_1-x_2) \in \Vect(y_1,y_2)$. As $\codim \calT=\dim V-2$, we deduce that
$S=\calT$ and $\# \K>2$. In well-chosen bases of $U$ and $V$, the space $\calT$ is represented by
$V_1 \coprod \Mat_{n,p-1}(\K)$, where $V_1$ is the subspace $\K^2 \times \{0\}$ of $\K^n$, and $p=\dim U$.
By Lemma \ref{dimU=1lemma}, every quasi-range-compatible homomorphism on $V_1$ is local, whereas
Corollary \ref{quasitotalspace} yields that every quasi-range-compatible homomorphism on $\Mat_{n,p-1}(\K)$ is local.
Applying the splitting lemma, we conclude that every quasi-range-compatible homomorphism on $S$ is local.
\end{proof}

\subsection{Proofs of Propositions \ref{quasidegenerate1} and \ref{quasidegenerate2}}\label{proofdegenerate}

Here, we will use Theorem \ref{quasitheo1} to study quasi-range-compatible homomorphisms on spaces of special type.

We start with the proof of Proposition \ref{quasidegenerate1}.

\begin{proof}[Proof of Proposition \ref{quasidegenerate1}]
Without loss of generality, we can assume that $U=\K^p$, $V=\K^n$, $x$ is the first vector of the standard basis of
$\K^p$, and $S x=\K \times \{0\}$.

In that canonical situation, $S$ is seen as a linear subspace of $\Mat_{n,p}(\K)$.
For every $M=(m_{i,j}) \in S$, let us write
$$M=\begin{bmatrix}
C_1(M) & J(M)
\end{bmatrix}$$
where $$C_1(M)=\begin{bmatrix}
m_{1,1} \\
[0]_{(n-1) \times 1}
\end{bmatrix} \quad \text{and} \quad
J(M) \in \Mat_{n,p-1}(\K).$$
First of all, let us consider the subspace $\calT$ of all matrices $M \in S$ such that $C_1(M)=0$.
Then, $\dim \calT =\dim S-1$ and $\calT=\{0\} \coprod J(\calT)$.
Theorem \ref{quasitheo1} applies to $J(\calT)$, and hence every range-compatible homomorphism on $J(\calT)$ is local.
This yields a vector $x_1 \in U$ such that $F(t)=t(x_1)$ for all $t \in \calT$.
Replacing $F$ with $s \mapsto F(s)-s(x_1)$, no generality is then lost in assuming that $F(t)=0$ for all $t \in \calT$.
Thus, we find a homomorphism $\varphi : S x \rightarrow V$ such that
$$\forall s \in S, \; F(s)=\varphi(s(x)).$$

Assume first that $\varphi$ maps $S x$ into itself.
If $\varphi$ were linear, then $\varphi : y \mapsto \lambda y$ for some $\lambda \in \K$
and $F$ would be the local map $s \mapsto s(\lambda x)$, contradicting our assumptions.
Thus, $\varphi$ is non-linear, and outcome (a) from Proposition \ref{quasidegenerate1} holds.

\vskip 3mm
In the rest of the proof, we assume that $\varphi(S x)\not\subset S x$, and we prove that outcome (b) holds.
It only remains to prove that $D=S x$.
Assume on the contrary that $D \neq S x$. Then, $S$ does not contain the elementary matrix $E_{1,1}$
(with zero entries everywhere except at the $(1,1)$-spot where the entry equals $1$): indeed, the contrary would yield
that, for every $a \in \K$, the vector $F(a E_{1,1})=\varphi(a\,x)$ belongs to $S x$, resulting in
$\im \varphi \subset S x$. Therefore, $\dim J(S)=\dim S$, and hence
Theorem \ref{quasitheo1} applies to $J(S) \modu S x$.
Choosing a matrix space $\calT'$ which is represented by $J(S)\modu S x$, we see
that $\{0\} \coprod \calT'$ represents $S \modu S x$, whence
$F \modu S x$ is local.
This yields a vector $x' \in U$ such that
$$\forall s \in S, \quad F(s)=s(x') \mod S x.$$
Let us choose $s_1$ in $S$ such that $s_1(x) \neq 0$, and set $P:= \K s_1(x')+ S x$, which is a linear subspace with dimension at most $2$.
Then, for all $s \in S$, there exists $\lambda \in \K$ such that $s(x)=\lambda\, s_1(x)$,
leading to $\varphi(s(x)) = F(\lambda s_1)=\lambda\,s_1(x')\mod S x$.
Hence, $$S x +\im \varphi \subset P.$$
Thus, $S$ is included in the space $\calW$ of all operators $w \in \calL(U,V)$ such that
$w(x) \in S x$ and $w(x')\in P$. Note that $x$ is linearly independent on $x'$ as the contrary would yield $\im \varphi \subset S x$.
It follows that $\calW$ has codimension at least $2\dim V-3$ in $\calL(U,V)$,
whence $\calW=S$. Therefore, $S$ contains $E_{1,1}$, contradicting an earlier result.
This final contradiction leads to $D=S x$, which completes the proof.
\end{proof}

\begin{proof}[Proof of Proposition \ref{quasidegenerate2}]
Without loss of generality, we can assume that $U=\K^p$, $V=\K^n$, $x$ is the first vector of the standard basis of
$\K^p$, and $S x=\K^2 \times \{0\}$. Set $P:=S x$.
In that canonical situation, $S$ is seen as a linear subspace of $\Mat_{n,p}(\K)$, and we write every matrix $M=(m_{i,j})$ of $S$ as
$$M=\begin{bmatrix}
C_1(M) & J(M)
\end{bmatrix}$$
where
$$C_1(M)=\begin{bmatrix}
m_{1,1} \\
m_{2,1} \\
[0]_{(n-2) \times 1}
\end{bmatrix} \quad \text{and} \quad
J(M) \in \Mat_{n,p-1}(\K).$$
As in the proof of Proposition \ref{quasidegenerate1}, we deduce from Theorem \ref{quasitheo1} that
$F$ restricts to a local map on the space of all operators $s \in S$ such that $s(x)=0$,
which allows us to reduce the situation to the one where there is a homomorphism $\varphi : P \rightarrow V$
such that
$$\forall s \in S, \; F(s)=\varphi(s(x)).$$
Note that $\varphi$ is linear since $\K$ is a prime field.

Next, we demonstrate that $D \subset P$ and that $\varphi$ is a projection of $P$
whose eigenspaces differ from $D$.
To do so, we prove that every vector of $P \setminus D$ is an eigenvector of $\varphi$.

Let $y \in P \setminus D$. Without loss of generality, we can assume that $y$ is the first vector of the standard basis of $\K^n$.
If $S$ contains $E_{1,1}$, then as $D \neq \K y$ we find that $F(E_{1,1}) \in \im E_{1,1}=\K y$, that is $\varphi(y) \in \K y$.

Assume now that $S$ does not contain $E_{1,1}$. Then, $\codim(S \modu y) \leq \codim S-1 \leq 2(\dim V-1)-4$, and
$(S \modu y)x=P/\K y$ has dimension $1$. Proposition \ref{quasidegenerate1}
yields a vector $x' \in U$ together with a
linear mapping $\psi : P \rightarrow V$ such that
$$\forall s \in S, \quad F(s)=\psi(s(x))+s(x') \mod \K y$$
and $\psi$ vanishes at $y$ (if $F$ is local we simply take $\psi=0$).
We claim that $x$ and $x'$ are linearly dependent.
Otherwise, we would find that $S$ is included in the space
$\calT$ of all operators $t \in \calL(U,V)$ such that $t(x) \in P$ and $t(x') \in \K y+\im(\varphi-\psi)$, which has codimension at least $2\dim V-5$ in $\calL(U,V)$; then, $S=\calT$, and we would deduce that $S$ contains $E_{1,1}$, contradicting an earlier assumption.
Thus, $x'=\alpha \,x$ for some scalar $\alpha$. Then, by choosing $s \in S$ such that $s(x)=y$, we find $\varphi(y)\in \K y$.

Therefore, every vector of $P \setminus D$ is an eigenvector of $\varphi$.
Yet, $\varphi$ cannot equal $\beta\,\id_P$ for some $\beta \in \K$, as it would yield that $F$ is the local map $s \mapsto s(\beta\,x)$.
Thus, $\varphi$ has several eigenvalues. As $\K=\{0,1\}$, it follows that $D \subset P$ and that $\varphi$
is a projection whose eigenspaces are different from $D$.
\end{proof}

\subsection{Proof of point (a) in Theorem \ref{quasitheo2}}\label{proofquasitheo2dim2}

Here, we assume that $\dim V=2$, $\# \K>3$ and $\codim S \leq 1$.
If $\codim S=0$ then we know from Corollary \ref{quasitotalspace} that every range-compatible homomorphism on $S$
is local. In the rest of the section, we assume that $\codim S=1$, so that $S^\bot$ has dimension $1$.

If $S^\bot$ contains a rank $1$ operator $t$, we choose a non-zero vector $x$ in its range
and we obtain that $\dim S x \leq 1$. As $\codim S=1$ we deduce that $\dim S x=1$, and hence $S$ has special type 1.

In the rest of the proof, we assume that $S^\bot$ contains a rank $2$ operator $t$.
Then, we aim at proving that every quasi-range-compatible linear map on $S$ is local.
Let $F : S \rightarrow V$ be a linear map that is quasi-range-compatible with respect to some $1$-dimensional linear subspace $D_0$
of $V$. Let us choose a basis $\bfC=(y_1,y_2)$ of $V$ and a basis $\bfB$ of $U$ in which $t$ is represented by
$\begin{bmatrix}
0 & -1 \\
1 & 0 \\
[0]_{(p-2) \times 1} & [0]_{(p-2) \times 1}
\end{bmatrix}$ and $y_2 \in D_0$ (where $p:=\dim U$).
It follows that $S$ is represented in the bases $\bfB$ and $\bfC$ by the space $\Mats_2(\K) \coprod \Mat_{2,p-2}(\K)$.
As every quasi-range-compatible homomorphism on $\Mat_{2,p-2}(\K)$ is local, it only remains to prove the following lemma.

\begin{lemma}\label{symlemma}
Let $\K$ be a field with more than $3$ elements.
Every linear map on $\Mats_2(\K)$ that is quasi-range-compatible with respect to $D_0:=\{0\} \times \K$ is local.
\end{lemma}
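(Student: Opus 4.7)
The plan is to reduce the problem to a polynomial identity in $\K[t]$ that the hypothesis $\#\K > 3$ forces to vanish identically, from which the local form of $F$ falls out by inspection.

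First, I would parametrize the rank $1$ symmetric matrices whose range is not $D_0$. A rank $1$ symmetric $2\times 2$ matrix has the form $\lambda v v^T$ with $v \in \K^2 \setminus \{0\}$, and its range equals $D_0 = \{0\}\times \K$ if and only if $v$ is collinear with the second standard basis vector. Up to scalar multiplication, every other such matrix is
\[
M_t := \begin{bmatrix} 1 & t \\ t & t^2 \end{bmatrix}, \qquad t \in \K,
\]
whose range is $\K(1,t)^T$.

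Next, I would write $F(M) = \bigl(\alpha(M),\beta(M)\bigr)^T$, with $\alpha$ and $\beta$ linear forms on $\Mats_2(\K)$; parametrize them as $\alpha\begin{bmatrix}a&b\\b&c\end{bmatrix} = a_1 a + a_2 b + a_3 c$ and $\beta\begin{bmatrix}a&b\\b&c\end{bmatrix} = b_1 a + b_2 b + b_3 c$. Then $P(t):=\alpha(M_t) = a_1 + a_2 t + a_3 t^2$ and $Q(t):=\beta(M_t) = b_1 + b_2 t + b_3 t^2$ have degree at most $2$. Since $\im M_t = \K(1,t)^T$ does not contain $D_0$, quasi-range-compatibility applied to $M_t$ forces $F(M_t) \in \K(1,t)^T$, which is equivalent to the single equation $t\,P(t) - Q(t) = 0$. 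This is a polynomial in $t$ of degree at most $3$ that vanishes on all of $\K$; because $\#\K > 3$, all four of its coefficients must be zero, giving $b_1 = 0$, $b_2 = a_1$, $b_3 = a_2$, $a_3 = 0$. Plugging back,
\[
F\begin{bmatrix} a & b \\ b & c \end{bmatrix} = \begin{bmatrix} a_1 a + a_2 b \\ a_1 b + a_2 c \end{bmatrix} = \begin{bmatrix} a & b \\ b & c \end{bmatrix} \begin{bmatrix} a_1 \\ a_2 \end{bmatrix},
\]
so $F$ is the local map attached to $x := (a_1,a_2)^T \in \K^2$.

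I do not foresee a substantive obstacle here. The only non-trivial ingredient is the threshold $\#\K > 3$ applied to a polynomial of degree at most $3$ that vanishes on $\K$, and that is exactly the reason why the hypothesis appears in the statement, as the $\Mats_2(\F_3)$ counterexample described earlier in the paper makes clear.
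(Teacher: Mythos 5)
Your proof is correct and follows essentially the same route as the paper: both evaluate $F$ on the family $M_t=\begin{bmatrix}1&t\\t&t^2\end{bmatrix}$, obtain a polynomial identity of degree at most $3$ valid for all $t\in\K$, and use $\#\K>3$ to kill its coefficients. The only cosmetic difference is that the paper first subtracts a local map to normalize the second coordinate of $F$, whereas you keep all six coefficients and read off the local map at the end.
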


\begin{proof}
Let $F : \Mats_2(\K) \rightarrow \K^2$ be a linear map that is quasi-range-compatible with respect to $D_0$.
Subtracting a local map from $F$, we find scalars
$\alpha,\beta,\gamma,\delta$ such that
$$F : \begin{bmatrix}
a & b \\
b & c
\end{bmatrix} \mapsto \begin{bmatrix}
\alpha a+\beta b+\gamma c \\
\delta a
\end{bmatrix}.$$
For all $t \in \K$, the range of the matrix $\begin{bmatrix}
1 & t \\
t & t^2
\end{bmatrix}$ is spanned by $\begin{bmatrix}
1 \\
t
\end{bmatrix}$ and is therefore different from $D_0$. It follows that
$$\forall t \in \K, \; \delta=\alpha t+\beta t^2+\gamma t^3.$$
As $\K$ has more than $3$ elements, this polynomial identity leads to $\delta=\alpha=\beta=\gamma=0$.
Thus, $F=0$ and hence $F$ is local.
\end{proof}

This completes the proof of point (a) in Theorem \ref{quasitheo2}.

\subsection{Common lemmas for the proof of Theorem \ref{quasitheo2}}\label{quasibasiclemmas}

Here, we gather several elementary results that will be used in the proof of points (b) and (c) from Theorem \ref{quasitheo2}
and in Section \ref{quasiRCaffine}. 

\begin{lemma}\label{quasialternative}
Let $S$ be a linear subspace of $\calL(U,V)$, where $U$ and $V$ are finite-dimensional vector spaces.
Set $n:=\dim V$ and $p:=\dim U$. \\
Assume that $\dim S^\bot y \leq 1$ for all $y \in V$, and that $\codim S>1$.
Then, in some bases of $U$ and $V$, the space $S$ is represented by
$\calR \coprod \Mat_{n,p-1}(\K)$ for some linear subspace $\calR$ of $\K^n$.
\end{lemma}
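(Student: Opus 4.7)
The plan is to exhibit a $1$-dimensional subspace $L$ of $U$ such that $\im t \subset L$ for every $t \in S^\bot$. Once such an $L$ is produced, I pick a basis $(e_1,\dots,e_p)$ of $U$ with $e_1$ spanning $L$ and an arbitrary basis of $V$. A direct computation with the trace pairing shows that the orthogonal in $\calL(U,V)$ of the space $\{t \in \calL(V,U) : \im t \subset \K e_1\}$ equals $\{s \in \calL(U,V) : s(e_1)=0\}$; since $S = (S^\bot)^\bot$, the space $S$ therefore contains every operator vanishing at $e_1$, i.e.\ every matrix whose first column is zero. Setting $\calR := \{s(e_1) : s \in S\}$ and viewing it as a subspace of $\K^n$ via the chosen basis of $V$, we conclude $S = \calR \coprod \Mat_{n,p-1}(\K)$ in matrix form.

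To produce $L$, I consider the evaluation map
\[
\varepsilon : V \longrightarrow \Hom(S^\bot, U), \quad y \longmapsto \bigl(t \mapsto t(y)\bigr).
\]
The hypothesis $\dim S^\bot y \leq 1$ for every $y \in V$ translates precisely into the statement that $\varepsilon(V)$ is a linear subspace of $\Hom(S^\bot, U)$ in which every operator has rank at most $1$. The classical dichotomy for such rank-$\leq 1$ subspaces then applies: either (a) there exists a common image line $L \subset U$ with $\im \varphi \subset L$ for every $\varphi \in \varepsilon(V)$, or (b) there exists a common kernel hyperplane $\calH \subset S^\bot$ with $\calH \subset \Ker \varphi$ for every $\varphi \in \varepsilon(V)$. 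Case (a) immediately produces the desired $L$, since $\varepsilon(y)(t) = t(y)$ then belongs to $L$ for all $t \in S^\bot$ and $y \in V$. Case (b) is excluded by the hypothesis $\codim S > 1$: indeed, any $t \in \calH$ would satisfy $t(y) = 0$ for every $y \in V$, forcing $\calH = \{0\}$ and hence $\dim S^\bot \leq 1$.

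The main obstacle is the dichotomy itself, which is however classical: in the two-dimensional case it follows from observing that if two rank-$1$ maps have distinct image lines and distinct kernel hyperplanes, then their sum has rank $2$; the general case then follows by pairwise comparison of nonzero elements. An entirely elementary alternative is available: assuming no $L$ exists, one chooses linearly independent $t_1, t_2 \in S^\bot$ and vectors $y_1, y_2 \in V$ such that $u := t_1(y_1)$ and $v := t_2(y_2)$ are linearly independent; from the rank condition applied at $y_1$, $y_2$ and $y_1 + y_2$ one derives $t_2(y_1) = \alpha u$, $t_1(y_2) = \beta v$ and $\alpha \beta = 1$; the operator $t_3 := t_2 - \alpha t_1 \in S^\bot$ is then nonzero and vanishes on $\Vect(y_1, y_2)$, and picking $y_0 \in V$ with $t_3(y_0) \neq 0$, the rank condition at $y_0 + y_1 + y_2$ ultimately yields the contradiction $1 = 0$.
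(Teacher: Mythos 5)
Your proof is correct and takes essentially the same route as the paper's: the space $\varepsilon(V)$ you introduce is exactly the space $\widehat{S^\bot}$ of evaluation operators used in the paper, the classification of rank-$\leq 1$ operator spaces is invoked identically, and the common-kernel branch is ruled out by the same observation that it would force $\dim S^\bot \leq 1$. The only differences are cosmetic: you spell out the trace-pairing computation and the final identification $S=\calR \coprod \Mat_{n,p-1}(\K)$ in more detail, and you append an optional elementary proof of the dichotomy that the paper simply cites.
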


\begin{proof}
Every operator in $\widehat{S^\bot}:=\{t \in S^\bot \mapsto t(y) \mid y \in V\}$
has rank at most $1$. By the classification of vector spaces of operators with rank at most $1$,
either all the non-zero operators in $\widehat{S^\bot}$ share the same range, or all of them share the same kernel.
However, no non-zero vector of $S^\bot$ belongs to the kernel of every operator in $\widehat{S^\bot}$
and hence the second option implies that $\dim S^\bot \leq 1$, contradicting our assumptions.
Thus, we find a non-zero vector $x \in U$ such that $S^\bot y \subset \K x$ for all $y \in V$.
Extending $x$ into a basis $(x,e_2,\dots,e_p)$ of $U$, we deduce that $S$ is represented by a space of matrices of the form
$\calR \coprod \Mat_{n,p-1}(\K)$ for some linear subspace $\calR$ of $\K^n$.
\end{proof}

\begin{lemma}\label{quasiexistquadform}
Let $U$ and $V$ be finite-dimensional vector spaces, and $S$ be a linear subspace of $\calL(U,V)$.
Assume that $\dim S^\bot y > 1$ for some $y \in V$.
Then, there is a non-zero quadratic form $q$ on $V$ such that
$\dim S^\bot z \geq 2$ for every vector $z \in V$ such that $q(z) \neq 0$.
\end{lemma}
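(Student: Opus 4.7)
The plan is to realize the condition $\dim S^\bot z \leq 1$ as the simultaneous vanishing of an explicit family of quadratic forms on $V$, and then extract from that family a form that is not identically zero by evaluating at the vector $y$ provided by the hypothesis.

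Fix a basis $(t_1,\dots,t_N)$ of $S^\bot$, and fix a basis of $U$, so that we may view each $t_j$ as a linear map from $V$ to $\K^p$, where $p := \dim U$. For every $y \in V$, let $M(y) \in \Mat_{p,N}(\K)$ be the matrix whose $j$-th column is $t_j(y)$. Since each $t_j$ is linear, every entry of $M(y)$ is a linear form in $y$, and hence every $2\times 2$ minor of $M(y)$, viewed as a function of $y$, is a quadratic form on $V$. Moreover $S^\bot y = \Vect(t_1(y),\dots,t_N(y))$ is the column span of $M(y)$, so $\dim S^\bot y = \rk M(y)$, and the condition $\dim S^\bot y \leq 1$ is exactly the condition that all $2\times 2$ minors of $M(y)$ vanish.

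Now I would use the hypothesis: there is some $y_0 \in V$ with $\dim S^\bot y_0 \geq 2$, i.e.\ $\rk M(y_0) \geq 2$. Hence one can select a pair of rows and a pair of columns of $M(y_0)$ for which the corresponding $2\times 2$ submatrix is invertible; let $q$ be the associated $2\times 2$ minor, regarded as a quadratic form on $V$. Then $q(y_0) \neq 0$, so $q$ is nonzero, and for any $z \in V$ with $q(z) \neq 0$ the same $2\times 2$ submatrix of $M(z)$ is invertible, so $\rk M(z) \geq 2$, that is, $\dim S^\bot z \geq 2$, which is the desired conclusion.

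I do not anticipate any real obstacle: the only substantive point is the observation that bilinear combinations of linear forms are quadratic forms, which translates the rank condition directly into a system of explicit quadratic equations on $V$, and the hypothesis immediately produces a non-vanishing point for one of them.
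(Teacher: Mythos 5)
Your proof is correct and is essentially the paper's own argument: the paper likewise represents $z \mapsto (t \in S^\bot \mapsto t(z))$ by a matrix whose entries are linear forms in $z$ and takes a $2\times 2$ minor as the quadratic form, merely normalizing the bases beforehand so that the non-vanishing minor is the upper-left one, whereas you select whichever minor is invertible at $y_0$. The two formulations are interchangeable.
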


\begin{proof}
Consider the operator space $\widehat{S^\bot}:=\bigl\{t\in S^\bot \mapsto t(z) \mid z \in V\bigr\}$.
We can choose bases of $S^\bot$ and $U$ in which the operator $t \in S^\bot \mapsto t(y)$ is represented by
$\begin{bmatrix}
I_r & [0] \\
[0] & [0]
\end{bmatrix}$ for some integer $r \geq 2$. Then, to all $z \in V$ we assign the determinant $q(z)$
of the upper-left $2 \times 2$ submatrix of the matrix representing $t\in S^\bot \mapsto t(z)$ in the said bases.
Clearly, $q$ is quadratic form on $V$ and $\rk(t\in S^\bot \mapsto t(z)) \geq 2$ for all $z \in V$ such that $q(z) \neq 0$, that is
$\dim S^\bot z \geq 2$ for all such $z$.
\end{proof}

\begin{lemma}\label{quasipropquadform}
Assume that $\# \K>2$.
Let $q$ be a quadratic form over a finite-dimensional vector space $E$.
Assume that there is a linear hyperplane $H$ of $E$ and two linear subspaces $P_1$ and $P_2$ of codimension $2$ in $E$
such that $q$ vanishes at every vector of $E \setminus (H \cup P_1 \cup P_2)$. Then, $q=0$.
\end{lemma}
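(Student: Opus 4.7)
\emph{Proof plan.}
I show $q(v)=0$ for every $v \in E$ by a two-step case analysis on whether $v \in H$. Write $B(x,y):=q(x+y)-q(x)-q(y)$ for the polarization of $q$. The basic observation is that if $e \in E$ satisfies $q(e)=0$, then $\lambda \mapsto q(v+\lambda e)=q(v)+\lambda B(v,e)$ is linear in $\lambda$, and therefore it vanishes identically (in particular at $\lambda=0$) as soon as it vanishes at two distinct scalars.

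\emph{Step 1: $v\in H$.} I pick a test vector $w\notin H\cup P_1\cup P_2$ such that $\bar w\notin \K\bar v$ in the $2$-dimensional quotient $E/P_i$ for every $i \in \{1,2\}$ with $v\notin P_i$. The forbidden region for $w$ is a union of three linear hyperplanes of $E$: namely $H$, together with, for each $i$, either a hyperplane containing $P_i$ (when $v\in P_i$) or $\K v+P_i$ (when $v\notin P_i$). Because $\#\K>2$, $E$ is not covered by three linear hyperplanes, so such $w$ exists. For each $\lambda\in\K^\star$ one checks that $v+\lambda w\notin H\cup P_1\cup P_2$: the bad parameter from $H$ is forced to $0$ (as $v\in H$ and $w\notin H$); the bad parameter from any $P_i$ containing $v$ is likewise forced to $0$; and the bad parameter from any $P_i$ not containing $v$ has no solution because $\bar v$ and $\bar w$ are linearly independent in the $2$-dimensional quotient $E/P_i$. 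Since $q(w)=0$ and $|\K^\star|\geq 2$, the linear-polynomial argument yields $q(v)=0$.

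\emph{Step 2: $v\notin H$.} Step 1 gives $q(h)=0$ for every $h\in H\setminus(P_1\cup P_2)$, a set which is non-empty because $H$ is not the union of its two proper subspaces $P_1\cap H$ and $P_2\cap H$. I pick such an $h$ satisfying additionally $\bar h\notin \K\bar v$ in $E/P_i$ for every $i\in\{1,2\}$ with $v\notin P_i$; the same covering argument applied \emph{inside} $H$, again using $\#\K>2$, produces $h$ (the three subsets of $H$ to avoid are all proper subspaces of $H$). The crucial point is that the test vector is now taken inside $H$: for every $\alpha\in\K$, $\alpha h\in H$ and $v\notin H$ force $v+\alpha h\notin H$ automatically. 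The $P_i$-conditions are handled exactly as in Step 1, hence $v+\alpha h\notin H\cup P_1\cup P_2$ for every $\alpha\in\K^\star$. Since $q(h)=0$ and $|\K^\star|\geq 2$, the linear-polynomial argument again gives $q(v)=0$.

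The main obstacle is Step 2: a test vector taken outside $H$ would force an unavoidable bad parameter $\lambda_H\in\K^\star$ coming from the $1$-dimensional quotient $E/H$, leaving only $\#\K-2$ good parameters, which equals $1$ over $\mathbb{F}_3$ and is insufficient for the linear-polynomial trick. Moving the test vector inside $H$ dissolves this difficulty and keeps the argument uniform in $\K$.
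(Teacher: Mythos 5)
Your proof is correct, and it takes a genuinely different route from the paper's. The paper argues in three stages: it first shows that $q$ vanishes on $E \setminus (P_1 \cup P_2)$ by restricting $q$ to well-chosen $2$-dimensional linear subspaces (meeting $P_1$ and $P_2$ trivially and not contained in $H$) and using the fact that a quadratic form on a plane vanishing on at least three distinct lines is zero; it then repeats this with a hyperplane containing $P_1$ to get vanishing on $E \setminus P_2$; and it concludes by multiplying $q$ by a linear form vanishing on $P_2$ and invoking a degree-$3$ polynomial identity. Your argument is pointwise and one-pass: for each $v$ you construct an affine line $v+\K e$ through a vector $e$ already known to be isotropic, arranged so that the line meets $H \cup P_1 \cup P_2$ in at most the point $v$ itself, and the polarization identity $q(v+\lambda e)=q(v)+\lambda B(v,e)$ (valid because $q(e)=0$) forces $q(v)=0$ once $\#\K^\star\geq 2$. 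Both proofs rest on the same covering lemma (a vector space over a field with at least $3$ elements is not the union of $3$ proper subspaces), but yours avoids the iteration and the paper's final polynomial step --- a step which, as literally stated, is delicate over $\F_3$, where nonzero homogeneous cubics such as $x(x^2-y^2)$ do vanish identically as functions; your line argument is uniform in $\K$ and sidesteps this entirely. The device in your Step 2 of taking the test vector \emph{inside} $H$, so that the $H$-constraint is automatic for every $\alpha$, is exactly what keeps the count of excluded parameters at one and makes the argument go through over $\F_3$. A minor bookkeeping remark: in Step 2 the subsets of $H$ to avoid in fact reduce to two (for each $i$, either $P_i\cap H$ or $(\K v+P_i)\cap H$, the former being contained in the latter when $v\notin P_i$), so the covering argument there is even more comfortable than your count of three suggests.
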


\begin{proof}
Firstly, we prove that $q$ vanishes everywhere on $E \setminus (P_1 \cup P_2)$.
Let $x \in H \setminus (P_1 \cup P_2)$. Then, the linear hyperplanes $H$, $P_1+\K x$ and $P_2+\K x$
do not cover $E$ since $\# \K \geq 3$. It follows that we can find $y \in E$ that belongs to none of them.
Then, $Q:=\Vect(x,y)$ is a $2$-dimensional linear subspace that intersects $P_1$ and $P_2$ trivially
and is not included in $H$. The restriction of $q$ to $Q$
vanishes at every $1$-dimensional linear subspace of $Q$ that is distinct from $Q \cap H$, and as $\# \K>2$
there are at least three such subspaces. It follows that the quadratic form $q$ vanishes everywhere on $Q$, and in particular $q(x)=0$.
Hence, $q$ vanishes everywhere on $E \setminus (P_1 \cup P_2)$.

Next, applying the same line of reasoning with an arbitrary linear hyperplane $H'$ that includes $P_1$, and with $P'_1:=P_2$ and
$P'_2:=P_2$, we obtain that $q$ vanishes everywhere on $E \setminus P_2$.

Finally, let us pick an arbitrary non-zero linear form $\varphi$ on $E$ that vanishes everywhere on $P_2$.
We have just shown that $x \mapsto q(x)\varphi(x)$ vanishes everywhere on $E$. However this function is a homogeneous polynomial
with degree $3$. As $\varphi$ is non-zero we conclude that $q=0$.
\end{proof}

Our final lemma is taken from \cite{dSPRC1} and will be useful for fields with two elements.

\begin{lemma}[Lemma 5.2 of \cite{dSPRC1}]\label{quadformlemmaF2}
Let $q$ be a quadratic form over a finite-dimensional vector space $E$, and $P$
be a linear subspace of $E$ with codimension $2$. Assume that $q$ vanishes everywhere on $E \setminus P$. Then, $q=0$.
\end{lemma}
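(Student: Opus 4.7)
The plan is to introduce the polar bilinear form $b$ of $q$, defined by $b(x,y):=q(x+y)-q(x)-q(y)$, and to show that $q$ vanishes at every $x\in P$ by playing with the values of $b(x,\cdot)$ on $E\setminus P$. Recall that $b$ is bilinear and symmetric over any field.

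First I would make the key observation: for $x \in P$ and $y \in E\setminus P$, the sum $x+y$ still lies in $E\setminus P$, since its class in $E/P$ equals the nonzero class of $y$. The hypothesis therefore gives $q(y)=q(x+y)=0$, so that
$$b(x,y)=q(x+y)-q(x)-q(y)=-q(x) \quad \text{for every } y \in E\setminus P.$$
In other words, for $x \in P$ fixed, the linear form $b(x,\cdot)$ is \emph{constant} on $E\setminus P$, equal to $-q(x)$.

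The crucial step is then to exploit this constancy against the linearity of $b(x,\cdot)$. Since $\codim P=2$, the quotient $E/P$ is $2$-dimensional and thus contains at least three distinct nonzero classes, independently of the cardinality of $\K$. I can therefore pick $y_1, y_2 \in E\setminus P$ whose images in $E/P$ are linearly independent; the class of $y_1+y_2$ is then also nonzero, so $y_1+y_2\in E\setminus P$ as well. Applying the previous identity to $y_1$, $y_2$ and $y_1+y_2$ yields
$$-q(x)=b(x,y_1+y_2)=b(x,y_1)+b(x,y_2)=-2\,q(x),$$
whence $q(x)=0$, as desired. Combined with the vanishing already known outside $P$, this gives $q=0$.

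There is no serious obstacle here: the argument is entirely uniform in the characteristic, and the only thing needed is the existence of two non-zero classes in $E/P$ with linearly independent representatives, which is automatic once $\codim P \geq 2$. The lemma is stated in full generality, but its natural usefulness, as emphasized in the text, is in the $\#\K=2$ setting, where Lemma \ref{quasipropquadform} is unavailable.
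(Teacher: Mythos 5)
Your argument is correct. Note that the paper does not prove this statement at all: it is imported verbatim as Lemma 5.2 of \cite{dSPRC1}, so there is no in-paper proof to compare against. Your proof is a valid, self-contained and characteristic-independent verification: the key identity $b(x,y)=-q(x)$ for all $x\in P$ and $y\in E\setminus P$ (which uses only that $x+y$ stays outside $P$), combined with additivity of $b(x,\cdot)$ evaluated at $y_1$, $y_2$ and $y_1+y_2$ with independent classes in $E/P$, gives $-q(x)=-2q(x)$ and hence $q(x)=0$ in every characteristic, including characteristic $2$ where the left side reads $q(x)$ and the right side reads $0$. This is exactly the kind of short polarization argument one would expect for the cited lemma, and it correctly exploits that $\codim P=2$ guarantees two classes of $E/P$ that are linearly independent, which is where a codimension-$1$ analogue would fail.
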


\subsection{Proof of point (b) of Theorem \ref{quasitheo2}}\label{proofquasitheo2notF2}

This section is devoted to the proof of statement (b) in Theorem \ref{quasitheo2}.
Again, this statement is proved by induction on $\dim V$.
Throughout the section, we assume that $\K$ has more than $2$ elements.

The case $\dim V=1$ is vacuous. If $\dim V=2$ then $S=\calL(U,V)$ and hence the result is known from
Corollary \ref{quasitotalspace}.
In the rest of the section, we assume that $\dim V \geq 3$ and we consider a homomorphism $F : S \rightarrow V$
that is quasi-range-compatible with respect to some $1$-dimensional linear subspace $D$ of $V$.
If $\codim S \leq 1$, then Theorem \ref{quasitheo1} shows that $F$ is local.
In the rest of the proof, we assume that $\codim S>1$, that $F$ is non-local and that $S$ does not have special type 1.
We shall find a contradiction. First, some definitions will help:

\begin{Def}
A vector $y$ of $V$ is called \textbf{$S$-adapted} whenever
$y \not\in D$ and $\codim (S \modu y) \leq 2 \dim V-6$. \\
An $S$-adapted vector $y$ is called \textbf{special} when $F \modu y$ is non-local.
\end{Def}

In particular, a vector $y$ of $V \setminus D$ is $S$-adapted if $\dim S^\bot y>1$.

If, in some bases of $U$ and $V$, there existed a linear subspace $\calR$ of $\K^n$ such that $S$
is represented by $\calR \coprod \Mat_{n,p-1}(\K)$, then:
\begin{itemize}
\item Either $\dim \calR=1$, in which case $S$ would have special type 1;
\item Or $\dim \calR \neq 1$, in which case Lemma \ref{dimU=1lemma}, Corollary \ref{quasitotalspace} and the splitting lemma would
yield that every quasi-range-compatible homomorphism on $S$ is local.
\end{itemize}
Thus, by Lemma \ref{quasialternative} there exists a vector $y \in V$ such that $\dim S^\bot y>1$,
and then Lemma \ref{quasiexistquadform} yields a non-zero quadratic form $q$ on $V$ that vanishes at
every vector $z \in V$ such that $\dim S^\bot z \leq 1$.

Assume first that $\dim V=3$. \\
Assume furthermore there are non-collinear $S$-adapted vectors $y_1$ and $y_2$.
Then, for all $i \in \{1,2\}$ we have $\codim (S \modu y_i)=0$ and hence
$S \modu y_i=\calL(U,V/\K y_i)$.
In particular, we know from Corollary \ref{quasitotalspace} that both maps $F \modu y_1$ and $F \modu y_2$ are local,
yielding vectors $x_1$ and $x_2$ in $U$ such that $F(s)=s(x_i) \mod y_i$ for all $s \in S$ and all $i \in \{1,2\}$.
Replacing $F$ with $s \mapsto F(s)-s(x_1)$, no generality is lost in assuming that $x_1=0$.
Then, we see that $s(x_2) \in \Vect(y_1,y_2)$ for all $s \in S$. If $x_2 \neq 0$, then this would
contradict the fact that $S \modu y_1=\calL(U,V/\K y_1)$. Therefore, $x_2=0$, and we conclude that
$F=0$ since $\K y_1 \cap \K y_2=\{0\}$. In particular, $F$ would be local, which is false.

Thus, we find a $1$-dimensional linear subspace $D'$ of $\K^3$ that contains all the $S$-adapted vectors of $V$.
However, it would follow that $q$ vanishes everywhere on $V \setminus (D \cup D')$, which would contradict Lemma \ref{quasipropquadform}.

\vskip 3mm

In the rest of the proof, we assume that $\dim V>3$. Then, we need an additional result:

\begin{claim}\label{quasi3vectors}
There do not exist linearly independent vectors $y_1,y_2,y_3$, all in $V \setminus D$, such that each map $F \modu y_i$ is local.
\end{claim}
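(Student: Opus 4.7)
The plan is to derive a contradiction with the non-locality of $F$ by exploiting the three congruences simultaneously. For each $i\in\{1,2,3\}$, the locality of $F\modu y_i$ yields a vector $x_i\in U$ such that $F(s)\equiv s(x_i)\pmod{\K y_i}$ for all $s\in S$. If any two of the $x_i$ coincide, then the corresponding difference $F(s)-s(x_i)$ lies in $\K y_i\cap \K y_j=\{0\}$, so $F$ would be local; hence the $x_i$ are pairwise distinct. After subtracting the local map $s\mapsto s(x_1)$ from $F$ (which preserves both non-locality and quasi-range-compatibility with respect to $D$), I may assume $x_1=0$, so that $F(s)\in \K y_1$ for every $s\in S$. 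The remaining two congruences then yield $s(x_2)\in \Vect(y_1,y_2)$ and $s(x_3)\in\Vect(y_1,y_3)$ for all $s$; if $x_2=0$ then $F(s)\in \K y_1\cap \K y_2=\{0\}$ and $F$ would be local, so $x_2\neq 0$, and likewise $x_3\neq 0$.

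I then split according to whether $x_2$ and $x_3$ are collinear. If they are, then $s(x_2)\in\Vect(y_1,y_2)\cap\Vect(y_1,y_3)=\K y_1$ for all $s\in S$, so $\dim Sx_2\leq 1$; the case $\dim Sx_2=1$ is excluded by the hypothesis that $S$ is not of special type $1$, while $\dim Sx_2=0$ forces $F(s)\in \K y_2$ as well, hence $F=0$, contradicting non-locality. So $x_2$ and $x_3$ are linearly independent, and $S$ is contained in the linear subspace $\calT$ of $\calL(U,V)$ consisting of all operators $t$ with $t(x_2)\in\Vect(y_1,y_2)$ and $t(x_3)\in\Vect(y_1,y_3)$. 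A direct dimension count gives $\codim \calT=2\dim V-4$, so combined with the hypothesis $\codim S\leq 2\dim V-4$ this forces $S=\calT$.

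The main obstacle is then to show that no non-zero homomorphism $F:\calT\to \K y_1$ is quasi-range-compatible with respect to $D$. Extending $(x_2,x_3)$ to a basis $(x_2,x_3,e_1,\dots,e_{p-2})$ of $U$, the space $\calT$ splits as a direct sum of the subspaces of operators supported on a single basis vector of $U$, so by the additivity of $F$ it suffices to show that $F$ vanishes on each summand. Each summand is canonically isomorphic, via evaluation at the corresponding basis vector, to its image space, namely $\Vect(y_1,y_2)$, $\Vect(y_1,y_3)$, or $V$. On a rank-one operator $s$ of such a summand with image $\K v$ for some $v\notin D$, quasi-range-compatibility forces $F(s)\in \K v$; combined with $F(s)\in \K y_1$, this yields $F(s)=0$ whenever $v\notin \K y_1$. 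Hence on each summand the induced homomorphism vanishes on the complement of $D\cup \K y_1$ inside its domain, and since $\#\K>2$ and each such domain has dimension at least $2$, an easy counting argument (a proper $\K$-subspace of a space of dimension $\geq 2$ over a field with $>2$ elements is too small to contain the complement of two $1$-dimensional subspaces, and the complement is stable under $\K^*$-scaling) shows that the additive subgroup generated by that complement is the entire domain. We conclude $F=0$ on each summand, hence on $S$, contradicting non-locality and proving the claim.
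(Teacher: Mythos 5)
Your proof is correct, and up to the identification $S=\calT$ it follows the paper's argument almost verbatim: extract the $x_i$, show they are pairwise distinct, normalize one of them to $0$, dispose of the collinear case, and use the codimension count $\codim\calT=2\dim V-4$ to force $S=\calT$. (Two small differences along the way: the paper normalizes $x_3=0$ where you take $x_1=0$, and in the collinear case it derives $s(x_i)=0$ directly from $\Vect(y_1,y_3)\cap\Vect(y_2,y_3)\cap\Vect(y_1,y_2)=\{0\}$ using the third congruence, whereas you invoke the standing hypothesis that $S$ does not have special type $1$; both are legitimate in context.) The endgame is where you genuinely diverge. The paper notes that the third congruence --- in your normalization, $s(x_2-x_3)\in\Vect(y_2,y_3)$ for all $s\in S$ --- is an extra constraint that $\calT$ visibly violates: the operator $t\in\calT$ with $t(x_2)=y_1$ and $t(x_3)=0$ has $t(x_2-x_3)=y_1\notin\Vect(y_2,y_3)$, so $S=\calT$ is absurd in one line. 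You leave that congruence unused and instead prove directly that every quasi-range-compatible homomorphism from $\calT$ into $\K y_1$ vanishes, by splitting $\calT$ into rank-one summands and observing that the complement of two lines additively generates each summand's image space. That argument is sound (the generated subgroup is stable under $\K^*$-scaling, hence a linear subspace, and a plane over a field with at least three elements is not covered by three lines), and it amounts to a hand-made instance of the splitting lemma together with Lemma~\ref{dimU=1lemma}. It costs you the hypothesis $\#\K>2$, which the paper's one-line finish does not need, but since that hypothesis is in force throughout the section nothing is actually lost --- the paper's route is just shorter.
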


\begin{proof}
Assume on the contrary that there exist such vectors $y_1,y_2,y_3$, yielding vectors $x_1,x_2,x_3$ in $U$ such that
$$\forall i \in \{1,2,3\}, \; \forall s \in S, \quad F(s)=s(x_i) \mod \K y_i.$$
If $x_i=x_j$ for some distinct $i$ and $j$ in $\{1,2,3\}$, we deduce that $F : s \mapsto s(x_i)$ since $\K y_i \cap \K y_j=\{0\}$.
In that case, $F$ would be local. As $F$ is non-local $x_1,x_2,x_3$ are pairwise distinct.

Replacing $F$ with $s \mapsto F(s)-s(x_3)$, we lose no generality in assuming that $x_3=0$, in which case
$F(s) \in \K y_3$ for all $s \in S$, and $x_1$ and $x_2$ are distinct non-zero vectors of $U$.
Thus, for all $s \in S$, we find that
$$s(x_1) \in \Vect(y_1,y_3), \quad s(x_2) \in \Vect(y_2,y_3) \quad \text{and} \quad s(x_1-x_2) \in \Vect(y_1,y_2).$$
If $x_1$ and $x_2$ are collinear, the above results yield $s(x_1)=0$ for all $s \in S$, since
$\Vect(y_1,y_3) \cap \Vect(y_2,y_3) \cap \Vect(y_1,y_2)=\{0\}$; this would imply $F=0$,
contradicting the assumption that $F$ is non-local.

Thus, $x_1$ and $x_2$ are linearly independent. The space $S$ is then included in the space $\calT$
of all linear operators $t \in \calL(U,V)$ such that $t(x_1)\in \Vect(y_1,y_3)$ and $t(x_2)\in \Vect(y_2,y_3)$.
Obviously, $\calT$ has codimension $2\dim V-4$ in $\calL(U,V)$, and it follows that $S=\calT$.
This is absurd as $\calT$ contains an operator $t$ such that $t(x_1)=y_3$ and $t(x_2)=0$, so that $t(x_1-x_2) \not\in \Vect(y_1,y_2)$.
\end{proof}

By induction, for every special  $S$-adapted vector $y$, we know that $S \modu y$ has special type 1.
As $F$ is non-local, Claim \ref{quasi3vectors} and the induction hypothesis yield a $2$-dimensional linear subspace $P$ of
$E$ that contains all the non-special $S$-adapted vectors. As $\dim V>3$, we can choose a linear hyperplane $H$ of $E$
that includes $D+P$. Thus, every $S$-adapted vector $y \in V \setminus H$ is a special $S$-adapted vector.

By Lemma \ref{quasipropquadform}, we can choose an $S$-adapted vector $y_1 \in V \setminus H$.
This yields a vector $x_1 \in U$ such that $(S \modu y_1) x_1$ has dimension $1$.
Then, $\dim \bigl((S \modu y_1) x_1\bigr) \leq \dim S x_1 \leq \dim \bigl((S \modu y_1) x_1\bigr)+1$.
As $S$ does not have special type 1, we deduce that $Q_1:=S x_1$ is a $2$-dimensional linear subspace of $V$.
Lemma \ref{quasipropquadform} yields a vector $y_2 \in V \setminus (Q_1 \cup H)$ such that $q(y_2) \neq 0$,
and hence $y_2$ is a special $S$-adapted vector outside of $Q_1$.
Thus, we have a vector $x_2 \in U$ such that $\dim (S \modu y_2)x_2=1$.
As above, $Q_2:=S x_2$ is a $2$-dimensional linear subspace of $V$.
However, as $y_2 \not\in Q_1$ we obtain $\dim (S \modu y_2)x_1=2$, whence
$x_1$ and $x_2$ are non-collinear. Thus, the subspace $\calT$ consisting of all the operators $t \in \calL(U,V)$
such that $t(x_1) \in Q_1$ and $t(x_2) \in Q_2$  has codimension $2\dim V-4$ in $\calL(U,V)$.
Therefore, $S=\calT$, and it follows that there are $2$-dimensional linear subspaces $\calV_1$ and $\calV_2$ of $\K^n$ such that,
in well-chosen bases of $U$ and $V$, the operator space $S$ is represented by the matrix space $\calV_1 \coprod \calV_2 \coprod \Mat_{n,p-2}(\K)$.
Applying Lemma \ref{dimU=1lemma}, Corollary \ref{quasitotalspace} and the splitting lemma, we conclude that every quasi-range-compatible
homomorphism on $S$ is local, contradicting our assumption that $F$ is non-local.

This completes the inductive proof of statement (b) in Theorem \ref{quasitheo2}.

\subsection{Proof of point (c) of Theorem \ref{quasitheo2}}\label{proofquasitheo2F2}

Here, we complete the proof of Theorem \ref{quasitheo2} by tackling statement (c).
Once again, this is done by induction on the dimension of $V$.
The case when $\dim V \leq 2$ is vacuous. If $\dim V=3$ then we must have $S=\calL(U,V)$ and the statement follows from Corollary
\ref{quasitotalspace}. In the rest of the proof, we assume that $\dim V \geq 4$.

We shall perform a \emph{reductio ad absurdum} by assuming:
\begin{enumerate}[(a)]
\item That $S$ has neither special type $1$ nor special type $2$;
\item That there exists a non-local homomorphism $F : S \rightarrow V$ that is quasi-range-compatible with respect to some
$1$-dimensional linear subspace $D$ of $V$.
\end{enumerate}

By Theorem \ref{quasitheo1} and assumption (b), we must have $\codim S>\dim V-3 \geq 1$.
If, in some bases of $U$ and $V$, there existed a linear subspace $\calR$ of $\K^n$ such that $S$
is represented by $\calR \coprod \Mat_{n,p-1}(\K)$, then:
\begin{itemize}
\item We would have $\dim \calR \not\in \{1,2\}$ since $S$ does not have special type $1$ or $2$;
\item Then, Lemma \ref{dimU=1lemma}, Corollary \ref{quasitotalspace} and the splitting lemma would
show that every quasi-range-compatible homomorphism on $S$ is local, contradicting assumption (b).
\end{itemize}
Thus, Lemma \ref{quasialternative} yields a vector $y \in V$ such that $\dim S^\bot y>1$.
By Lemma \ref{quasiexistquadform}, we obtain a non-zero quadratic form $q$ on $V$
that vanishes at every vector $z \in V$ such that $\dim S^\bot z \leq 1$.

\begin{Def}
A non-zero vector $z$ of $V$ is called \textbf{$S$-adapted} whenever
$z \not\in D$ and
$\dim (S \modu z) \leq 2 \dim V-8$.
\end{Def}

Thus, every vector $z \in V \setminus D$ such that $\dim S^\bot z \geq 2$ is $S$-adapted.
Beware that this definition is different from the one adopted in the preceding section.

\begin{claim}\label{distinctvectorsclaim}
There do not exist distinct vectors $y_1$ and $y_2$ in $V \setminus D$
such that both maps $F \modu y_1$ and $F \modu y_2$ are local.
\end{claim}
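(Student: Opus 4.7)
I would proceed by contradiction, mirroring the structure of Claim \ref{quasi3vectors} but exploiting a feature specific to $\# \K = 2$: any two distinct nonzero vectors of $V$ are automatically linearly independent, so I should get away with two vectors where the proof of Claim \ref{quasi3vectors} needed three.

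Assume then that $y_1 \neq y_2$ in $V \setminus D$ are such that both $F \modu y_1$ and $F \modu y_2$ are local, yielding $x_1, x_2 \in U$ with
$$\forall s \in S, \quad F(s) \equiv s(x_i) \pmod{\K y_i} \quad (i=1,2).$$
Since $y_1$ and $y_2$ lie in $V \setminus D \subset V \setminus \{0\}$ and are distinct, they are $\F_2$-linearly independent, so $\K y_1 \cap \K y_2 = \{0\}$. If $x_1 = x_2$, then the two congruences would immediately give $F(s) = s(x_1)$ for all $s \in S$, making $F$ local, contrary to hypothesis. Thus $x_1 \neq x_2$.

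Now set $x := x_1 - x_2 \neq 0$. Subtracting the two congruences gives
$s(x) \in \K y_1 + \K y_2 = \Vect(y_1,y_2)$ for every $s \in S$, hence $Sx \subseteq \Vect(y_1,y_2)$ and in particular $\dim Sx \leq 2$.

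The crux is then to invoke the standing hypothesis that $S$ has neither special type $1$ nor special type $2$: this forces $\dim Sx \notin \{1,2\}$, and combined with $\dim Sx \leq 2$ I obtain $\dim Sx = 0$, i.e.\ $s(x) = 0$ for every $s \in S$. But then $s(x_1) = s(x_2)$ for every $s$, so $F(s) - s(x_1) = F(s) - s(x_2)$ lies in $\K y_1 \cap \K y_2 = \{0\}$. Thus $F(s) = s(x_1)$ for every $s \in S$, contradicting non-locality of $F$.

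I do not expect any genuine obstacle: the argument is a clean two-vector analogue of the three-vector argument of Claim \ref{quasi3vectors}, the role of the $\F_2$-independence of $y_1,y_2$ being to let me dispense with a third vector, while the hypothesis that $S$ has no special type $1$ or $2$ is exactly what is needed to collapse $\dim Sx$ to $0$. The only thing to be watchful about is to avoid any spurious use of codimension bounds or quasi-range-compatibility beyond what is built into the very existence of $F \modu y_1$ and $F \modu y_2$ via Lemma \ref{quasiquotient}.
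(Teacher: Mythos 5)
Your proof is correct, and it coincides with the paper's argument up to the decisive step: both obtain $x_1\neq x_2$, deduce $s(x_1-x_2)\in\Vect(y_1,y_2)$ for all $s\in S$, and then use the standing hypothesis that $S$ has neither special type $1$ nor special type $2$ to force $s(x_1-x_2)=0$ for all $s\in S$. Where you diverge is in how the contradiction is extracted from this. The paper chooses bases adapted to $x_1-x_2$ so as to represent $S$ as $\{0\}\coprod\calT$ with $\codim\calT=\codim S-\dim V\leq \dim V-3$, and then invokes Theorem \ref{quasitheo1} together with the Splitting Lemma to conclude that every quasi-range-compatible homomorphism on $S$ is local. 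You instead observe directly that $s(x_1)=s(x_2)$ for all $s$ makes $F(s)-s(x_1)=F(s)-s(x_2)$ lie in $\K y_1\cap\K y_2=\{0\}$ (the intersection being trivial because distinct nonzero vectors over $\F_2$ are automatically independent), so that $F$ is the local map $s\mapsto s(x_1)$. Your ending is more elementary: it needs no change of basis, no appeal to Theorem \ref{quasitheo1}, and no codimension computation, whereas the paper's route establishes along the way the (here unneeded) stronger fact that \emph{every} quasi-range-compatible homomorphism on such an $S$ would be local. Both are valid; yours is the shorter derivation of the contradiction.
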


\begin{proof}
Assume on the contrary that there are two such vectors $y_1$ and $y_2$, yielding vectors $x_1$ and $x_2$ in $U$ such that
$F(s)=s(x_1) \mod \K y_1$ and $F(s)=s(x_2) \mod \K y_2$ for all $s \in S$.
If $x_1=x_2$ then $F(s)=s(x_1)$ for all $s \in S$, contradicting the assumption that $F$ is non-local.
Thus, $x_1-x_2 \neq 0$ and we find that $s(x_1-x_2) \in \Vect(y_1,y_2)$ for all $s \in S$.
As $S$ has neither special type $1$ nor special type $2$, we deduce that $s(x_1-x_2)=0$ for all $s \in S$.
Thus, in well-chosen bases of $U$ and $V$, the space $S$ is represented by the matrix space $\{0\} \coprod \calT$
for some linear subspace $\calT$ of $\Mat_{n,p-1}(\K)$, and $\codim \calT=\codim S-\dim V \leq \dim V-3$.
Theorem \ref{quasitheo1} yields that every quasi-range-compatible homomorphism on $\calT$ is local, and hence
 every quasi-range-compatible homomorphism on $S$ is local. This contradicts our assumption that $F$ is non-local.
\end{proof}

\begin{claim}\label{stype2claim}
For every $S$-adapted vector $y \in V \setminus D$, either $F \modu y$ is local or
$S \modu y$ has special type $2$.
\end{claim}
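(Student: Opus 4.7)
The plan is to apply induction on $\dim V$ through Lemma \ref{quasiquotient}. Since $y \not\in D$, the line $\K y$ does not contain $D$, so Lemma \ref{quasiquotient} yields a well-defined quasi-range-compatible homomorphism $F \modu y : S \modu y \to V/\K y$. Since $y$ is $S$-adapted, we have
$$\codim(S \modu y) \leq 2\dim V - 8 = 2\dim(V/\K y) - 6,$$
so $S \modu y$ satisfies the hypothesis of point (c) of Theorem \ref{quasitheo2} inside the strictly smaller space $\calL(U, V/\K y)$. The induction hypothesis then gives three possibilities: either $S \modu y$ has special type $1$, or $S \modu y$ has special type $2$, or every quasi-range-compatible homomorphism on $S \modu y$ is local.

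The latter two possibilities directly give the conclusion of the claim (in the second case $S \modu y$ has special type $2$; in the third, $F \modu y$ is local). So it only remains to rule out the first possibility. Suppose $S \modu y$ has special type $1$, witnessed by some $x \in U$ with $\dim (S \modu y)\, x = 1$. Denoting by $\pi : V \to V/\K y$ the canonical projection, we have $(S \modu y)\, x = \pi(S x)$, and the kernel of $\pi$ restricted to $Sx$ has dimension at most $1$. Consequently $\dim Sx \in \{1,2\}$, which means $S$ itself is of special type $1$ or of special type $2$. This contradicts the standing assumption (a) of the ongoing \emph{reductio ad absurdum}, so this possibility is impossible.

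The crux is that special types transfer well from the quotient back to $S$: a witness $x$ for special type $1$ in $S \modu y$ is automatically a witness for special type $1$ or $2$ in $S$, because the quotient can only drop the dimension of $Sx$ by at most $1$. I do not anticipate any technical obstacle beyond verifying the applicability of Lemma \ref{quasiquotient} and checking that the codimension bound on $S \modu y$ matches the inductive hypothesis.
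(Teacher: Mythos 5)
Your proof is correct and follows essentially the same route as the paper's: apply the induction hypothesis (point (c) of Theorem \ref{quasitheo2}) to $S \modu y$, and rule out the special type $1$ case by noting that a witness $x$ with $\dim (S \modu y)x = 1$ forces $\dim Sx \in \{1,2\}$, contradicting the standing assumption that $S$ has neither special type. The only difference is presentational: the paper phrases it as a direct contradiction while you run an explicit three-way case analysis, and you additionally spell out the applicability of Lemma \ref{quasiquotient} and the codimension bookkeeping, which the paper leaves implicit.
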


\begin{proof}
Assume that there is an $S$-adapted vector $y \in V \setminus D$ such that $F \modu y$ is non-local and
$S \modu y$ does not have special type $2$. Then, by induction $S \modu y$ has special type $1$,
yielding a non-zero vector $x \in U$ such that $(S \modu y)x$ has dimension $1$.
Then, $S x$ would have dimension $1$ or $2$, and hence $S$ would be of special type $1$ or of special type $2$,
which has been ruled out.
\end{proof}

\begin{claim}
There exists an $S$-adapted vector $y \in V \setminus D$ such that $F \modu y$ is non-local.
\end{claim}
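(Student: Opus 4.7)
The plan is to argue by contradiction: suppose that every $S$-adapted vector $y \in V \setminus D$ satisfies that $F \modu y$ is local, and then derive a contradiction by showing the quadratic form $q$ supplied by Lemma \ref{quasiexistquadform} must actually vanish.

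First I would apply Claim \ref{distinctvectorsclaim}, which forbids two distinct vectors of $V \setminus D$ from both yielding local quotient maps. Under the contradiction hypothesis this means there is at most one $S$-adapted vector, which I denote $y_0$ when it exists.

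Next I would translate the codimension bound $\codim S \leq 2\dim V-6$ into a clean description of $S$-adaptedness. Because $\codim(S \modu z) = \codim S - \dim S^\bot z$, every $z \in V \setminus D$ with $\dim S^\bot z \geq 2$ satisfies $\codim(S \modu z) \leq 2\dim V - 8$, and is therefore $S$-adapted. Combined with the defining property of $q$ (that $q(z)=0$ whenever $\dim S^\bot z \leq 1$), this shows that every $z \in V \setminus D$ with $q(z) \neq 0$ is $S$-adapted. The contradiction hypothesis then forces $q$ to vanish on all of $V \setminus (D \cup \{y_0\})$, or on all of $V \setminus D$ if no $S$-adapted vector exists.

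Finally I would apply Lemma \ref{quadformlemmaF2}. Writing $d$ for a generator of $D$, the pair $\{d,y_0\}$ is linearly independent (since $y_0 \notin D$), so it spans a $2$-dimensional subspace; since $\dim V \geq 4$, this span can be enlarged to a linear subspace $P$ of $V$ of codimension $2$ containing $\{0,d,y_0\}$. Then $q$ vanishes on $V \setminus P$, and Lemma \ref{quadformlemmaF2} forces $q=0$, contradicting the non-triviality of $q$. The degenerate case in which no $S$-adapted vector exists is identical, taking $P$ to be any codimension-$2$ subspace containing $D$. The main obstacle is the codimension bookkeeping that ties the quadratic form $q$ to the set of $S$-adapted vectors; once this link is in place the argument concludes via a direct application of Lemma \ref{quadformlemmaF2}.
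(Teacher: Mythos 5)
Your proof is correct and follows essentially the same route as the paper: both arguments combine Lemma \ref{quadformlemmaF2} (applied to a codimension-$2$ subspace containing $D$, resp.\ $D+\K y_0$) with Claim \ref{distinctvectorsclaim}; the paper merely phrases it constructively (find one adapted vector, and if its quotient map is local, find a second one outside $D+\K y$), whereas you run the contrapositive. The bookkeeping linking $q(z)\neq 0$ to $\dim S^\bot z\geq 2$ and hence to $S$-adaptedness is exactly as in the paper.
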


\begin{proof}
As we can choose a linear subspace of $V$ with codimension $2$ that includes $D$,
Lemma \ref{quadformlemmaF2} yields an $S$-adapted vector $y$ in $V \setminus D$.
Assume that $F \modu y$ is local. Then, with the same line of reasoning, we find another $S$-adapted vector $y'$ in $V \setminus (D+\K y)$.
Thus, by Claim \ref{distinctvectorsclaim}, the map $F \modu y'$ is non-local.
\end{proof}

\begin{claim}
One has $\dim V \geq 5$.
\end{claim}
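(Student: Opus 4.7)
The plan is a short argument by contradiction, showing that the case $\dim V = 4$ collapses immediately by applying Corollary \ref{quasitotalspace} to a quotient. Assume, for a contradiction, that $\dim V = 4$. The immediately preceding claim supplies an $S$-adapted vector $y \in V \setminus D$ such that $F \modu y$ is non-local.

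Now I unpack the definition of $S$-adapted. With $\dim V = 4$ one has $\codim(S \modu y) \leq 2\dim V - 8 = 0$, so $S \modu y$ coincides with the full operator space $\calL(U, V/\K y)$. Since $y \notin D$ and $D$ is $1$-dimensional, we have $D \not\subset \K y$, so Lemma \ref{quasiquotient} applies and guarantees that $F \modu y$ is a genuine quasi-range-compatible homomorphism on $\calL(U, V/\K y)$. As $\dim(V/\K y) = 3$ and $\# \K = 2$, the hypothesis $\dim V \geq 3$ of Corollary \ref{quasitotalspace} is met. Applying that corollary forces $F \modu y$ to be local, contradicting the preceding claim.

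There is essentially no obstacle: the numerical bound $2\dim V - 8$ in the definition of $S$-adapted is calibrated so that, precisely when $\dim V = 4$, the quotient $S \modu y$ is forced to be a full operator space handled directly by Corollary \ref{quasitotalspace}. For $\dim V \geq 5$ this shortcut is no longer available, so in the sequel one needs the genuine induction hypothesis for statement (c) on spaces of dimension $\dim V - 1$, which is why establishing $\dim V \geq 5$ is recorded as a separate claim before continuing.
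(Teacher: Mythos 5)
Your proof is correct and follows the paper's own argument essentially verbatim: take the special $S$-adapted vector $y$ from the preceding claim, observe that $\codim(S \modu y) \leq 2\dim V - 8 = 0$ forces $S \modu y = \calL(U, V/\K y)$, and apply Corollary \ref{quasitotalspace} to contradict the non-locality of $F \modu y$. The only difference is that you spell out the appeal to Lemma \ref{quasiquotient} and the dimension check for the corollary, which the paper leaves implicit.
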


\begin{proof}
Indeed, if $\dim V=4$ then we choose an $S$-adapted vector $y \in V \setminus D$ such that $F \modu y$ is non-local.
Then, we would have $S \modu y=\calL(U,V/\K y)$ since $\codim(S \modu y) \leq 0$, and by Corollary \ref{quasitotalspace} every quasi-range-compatible homomorphism on $S \modu y$ would be local, contradicting our assumptions.
\end{proof}

Now, we fix $y \in V \setminus D$ that is $S$-adapted and such that $F \modu y$ is non-local.
Thus, by Claim \ref{stype2claim} the space $S \modu y$ has special type $2$.
We choose a vector $x \in U$ such that $\dim (S \modu y)x=2$.
By Proposition \ref{quasidegenerate2}, we lose on generality in assuming that $F \modu y$ has rank $1$
(as we can subtract any local map from $F$) and that it maps every operator of $S$ into $(S \modu y)x$.

Note that $2 \leq \dim S x \leq 3$. As $S$ does not have special type $2$, we find
$\dim S x=3$ and hence $y \in S x$; by statement (a) in Proposition \ref{quasidegenerate2} applied to $F \modu y$,
we know that $(D+\K y)/\K y$ is included in $(S x)/\K y$, which yields $D \subset S x$ since $y \in S x$.
Set $Q:=S x$ and note that $\im F \subset Q$ and $\rk F \in \{1,2\}$.
As $\dim V \geq 5$, Lemma \ref{quadformlemmaF2} yields  a vector $z \in V \setminus Q$ such that
$q(z) \neq 0$, and in particular $z \in V \setminus D$, $z$ is $S$-adapted and $\rk(F \modu z)=\rk(F)$.

Thus, by Claim \ref{stype2claim} either $F \modu z$ is local, or else $S \modu z$ has special type $2$.

\begin{claim}
There is a vector $x' \in U \setminus \K x$ such that $\dim S x' \leq 3$.
\end{claim}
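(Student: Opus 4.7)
The plan is to exploit the dichotomy just recorded---either $F \modu z$ is local, or $S \modu z$ has special type $2$---and in either situation produce the desired vector $x'$.

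First I would handle the case where $F \modu z$ is local. This yields a vector $x' \in U$ such that $F(s)-s(x') \in \K z$ for every $s \in S$. Since $\im F \subset Q$ and $z \notin Q$, we get $s(x') \in \im F + \K z$ for all $s \in S$, hence $S x' \subset \im F + \K z$. Using $\rk F \leq 2$ and $z \notin \im F$, this forces $\dim S x' \leq 3$. It remains to check that $x' \notin \K x = \{0,x\}$: if $x' = 0$, then $\im F \subset \K z \cap Q = \{0\}$, so $F=0$ is local, contradicting assumption (b); if $x' = x$, then $F(s)-s(x) \in \K z \cap Q = \{0\}$ for all $s$, yielding the local map $s \mapsto s(x)$, again a contradiction.

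In the second case, the special type $2$ of $S \modu z$ yields a vector $x' \in U$ with $\dim (S \modu z) x' = 2$, and hence $\dim S x' \in \{2,3\}$. Because $S$ has neither special type $1$ nor special type $2$, necessarily $\dim S x' = 3$. Clearly $x' \neq 0$, and $x' \neq x$ since $z \notin Q = S x$ implies $\dim (S \modu z) x = \dim S x = 3$, not $2$.

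The only mildly delicate point is the first case: the naive estimate $S x' \subset Q + \K z$ would only give $\dim S x' \leq 4$, one too many. What saves the argument is the rank bound $\rk F \in \{1,2\}$ established earlier from the reduction through $F \modu y$ and Proposition \ref{quasidegenerate2}; without it, one cannot shave the extra dimension off and the claim would fail.
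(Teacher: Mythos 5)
Your proof is correct and follows essentially the same route as the paper: the same dichotomy (either $F \modu z$ is local or $S \modu z$ has special type $2$), the same use of $\dim (S\modu z)x=3$ to rule out $x'\in\K x$, and the same reliance on the rank bound $\rk F\in\{1,2\}$ to get $\dim Sx'\leq 3$ in the local case (your containment $Sx'\subset \im F+\K z$ is just a rephrasing of the paper's use of $\rk(F\modu z)\leq 2$). Your closing remark about the naive bound $\dim Sx'\leq 4$ correctly identifies why the rank information is indispensable.
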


\begin{proof}
Note that $(S \modu z) x$ equals $(Q +\K z)/\K z$, and hence it has dimension $3$.

Assume first that $S \modu z$ has special type $2$, yielding a vector $x' \in U \setminus \{0\}$
such that $\dim (S \modu z) x'=2$. Then, $x' \neq x$ and the conclusion follows immediately.

Assume now that $F \modu z$ is local, yielding a vector $x' \in U$ such that
$F(s)=s(x') \mod \K z$ for all $s \in S$. However $\rk(F \modu z)=\rk F \in \{1,2\}$
rules out the possibility that $x'=0$, but also the one that $x'=x$ since
$(S \modu z)x$ has dimension $3$. Thus, $x$ and $x'$ are linearly independent, and
we deduce from $\rk(F \modu z) \leq 2$ that the space $S x'$ has dimension at most $3$.
\end{proof}

From there, we can obtain a final contradiction:
set $Q':=S x'$, and
denote by $\calT$ the space of all operators $t \in \calL(U,V)$ such that $t(x) \in Q$ and $t(x') \in Q'$.
Note that $\calT$ has codimension $2 \dim V-\dim Q-\dim Q'$ in $\calL(U,V)$. Thus, $S=\calT$, $\dim Q'=3$
and, in some bases of $U$ and $V$, the space $S$ is represented by $\calR_1 \coprod \calR_2 \coprod \Mat_{n,p-2}(\K)$
for some $3$-dimensional linear subspaces $\calR_1$ and $\calR_2$ of $\K^n$. Then,
Lemma \ref{dimU=1lemma}, Corollary \ref{quasitotalspace} and the splitting lemma yield that every quasi-range-compatible homomorphism on
$S$ is local, contradicting our assumption on $F$.
This final contradiction completes our inductive proof of statement (c) of Theorem \ref{quasitheo2}.

\section{Quasi-range-compatible affine maps}\label{quasiRCaffine}

In this final section, we deal with quasi-range-compatible affine maps.
We shall combine some techniques from the previous sections with Theorems \ref{quasitheo1} and \ref{quasitheo2}
to obtain the optimal upper-bound on the codimension of an affine subspace $\calS$ of $\calL(U,V)$ for all
affine quasi-range-compatible maps on it to be local.

\subsection{The results}

Our first result is the equivalent of Theorem \ref{quasitheo1} for affine subspaces of operators.

\begin{theo}\label{quasiaffinetheo1}
Let $\calS$ be an affine subspace of $\calL(U,V)$ such that
$\codim \calS \leq \dim V-2$, or $\codim \calS \leq \dim V-3$ if $\# \K=2$.
Then, every quasi-range-compatible affine map on $\calS$ is local.
\end{theo}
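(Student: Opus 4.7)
The plan is to prove Theorem \ref{quasiaffinetheo1} by induction on $\dim V$, following the scheme of the proof of Theorem \ref{quasitheo1} but bridging the gap to the affine setting with Lemma \ref{quasiRCrank1lemma}. Denote by $S$ the translation vector space of $\calS$ and by $\vec{F}$ its linear part. The base cases, in which the codimension hypothesis forces $\calS = \calL(U,V)$, are handled by Corollary \ref{quasitotalspace} after observing that on a linear subspace every affine quasi-range-compatible map is automatically linear, since $F(0) \in \im 0 = \{0\}$.

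For the inductive step, assume $\calS \subsetneq \calL(U,V)$, so that $S^\bot \neq \{0\}$. The key preparatory observation is that, since $\codim \calS \leq \dim V - 2 < \dim V - 1$, point (c) of Lemma \ref{quasiRCrank1lemma} applies and yields $\vec{F}(t) \in \im t$ for every rank-$1$ operator $t \in S$. Consequently, for every $1$-dimensional linear subspace $V_0 = \K y$, the hypothesis of Lemma \ref{quasiaffinequotient} is satisfied, and whenever $y \in V \setminus D$ we obtain a quasi-range-compatible affine quotient map $F \modu y$ on $\calS \modu y \subset \calL(U, V/\K y)$.

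To push the inductive hypothesis through on $V/\K y$ (of dimension $\dim V - 1$) we need $\codim(\calS \modu y) = \codim \calS - \dim S^\bot y$ to satisfy the corresponding codimension bound; in the worst case this reduces to requiring $\dim S^\bot y \geq 1$. Fixing any non-zero $N \in S^\bot$, this inequality holds as soon as $y \notin V_0 := \Ker N$, and Lemma 2.5 of \cite{dSPfeweigenvalues} (applicable under our standing hypothesis on $\dim V$ relative to $\# \K$) produces a basis of $V$ lying outside $V_0 \cup D$; from it we extract two linearly independent vectors $y_1, y_2$. Induction then provides $x_1, x_2 \in U$ with $F(s) \equiv s(x_i) \mod \K y_i$ for all $s \in \calS$ and $i \in \{1,2\}$.

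If $x_1 = x_2$, then $F(s) - s(x_1) \in \K y_1 \cap \K y_2 = \{0\}$ for every $s \in \calS$, so $F$ is local. Otherwise $s(x_1-x_2) \in \Vect(y_1,y_2)$ for every $s \in \calS$, which embeds $\calS$ into the linear subspace $\calT := \{t \in \calL(U,V) : t(x_1-x_2) \in \Vect(y_1,y_2)\}$ of codimension $\dim V - 2$ in $\calL(U,V)$. This contradicts $\codim \calS \leq \dim V - 3$ when $\# \K = 2$, ruling that branch out; when $\# \K > 2$, the matching codimensions force $\calS = \calT$, hence $\calS$ is a linear subspace, $F$ is linear, and Theorem \ref{quasitheo1} delivers the conclusion. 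The main obstacle is transferring the control of $\vec{F}$ on rank-$1$ operators (provided by Lemma \ref{quasiRCrank1lemma}) into the quotient reduction while carefully tracking codimensions so that the inductive bound is preserved in both the $\# \K > 2$ and $\# \K = 2$ regimes.
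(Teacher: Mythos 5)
Your overall architecture (induction on $\dim V$, quotient by two independent directions, the dichotomy $x_1=x_2$ versus $\calS\subset\calT$) matches the paper's, but there is a genuine gap at the key preparatory step. You invoke point (c) of Lemma \ref{quasiRCrank1lemma} to conclude that $\vec{F}(t)\in\im t$ for every rank-$1$ operator $t\in S$. That lemma is stated and proved for \emph{range-compatible} affine maps, whereas your $F$ is only \emph{quasi}-range-compatible with respect to $D_0$. Its proof rests on the observation that $F(s)\in H$ for every $s\in\calS$ whose range lies in a ``good'' hyperplane $H$; in the quasi-range-compatible setting this only works for hyperplanes $H$ with $D_0\not\subset H$, so the argument breaks down and the conclusion is genuinely false in the stated generality. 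The paper has to prove a separate analogue (Lemma \ref{secondlinelemma}) whose conclusion is weaker: it produces a second exceptional line $D_1$ (cut out by the rank-$1$ operators of $S^\bot$) and only guarantees $\vec{F}(s)\in\im s$ for rank-$1$ operators $s\in S$ whose range differs from \emph{both} $D_0$ and $D_1$.

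This weaker control propagates into the quotient step: the hypothesis of Lemma \ref{quasiaffinequotient} for $V_0=\K y$ is only verified when $\K y\notin\{D_0,D_1\}$, so the vectors $y_1,y_2$ must be chosen outside $H\cup(D_0+D_1)$ (with $H$ the common kernel of $S^\bot$), not merely outside $\Ker N\cup D_0$ as you do. This is precisely where the stronger hypothesis $\codim\calS\le\dim V-3$ for $\#\K=2$ earns its keep: it forces $\dim V\ge 4$, which is what makes Lemma 2.5 of \cite{dSPfeweigenvalues} applicable to find two non-collinear vectors avoiding $H\cup(D_0+D_1)$ over $\F_2$. As written, your proposal never uses the $\#\K=2$ strengthening except to rule out the final branch $\calS\subset\calT$, which is a sign that the extra hypothesis has not been consumed where it is actually needed. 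The endgame of your argument (the $x_1=x_2$ case, and the reduction to Theorem \ref{quasitheo1} when $\calS=\calT$ is linear) is correct and identical to the paper's.
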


The examples from Section \ref{quasiRCsection1} show that the upper-bounds from Theorem \ref{quasiaffinetheo1}
are optimal. The next two results deal with the critical codimension when the field has more than $2$ elements.

\begin{theo}\label{quasiaffinetheo2}
Let $\calS$ be an affine subspace of $\calL(U,V)$ such that
$\codim \calS=\dim V-1$. Assume further that either $\dim V \geq 3$ and $\# \K>2$,
or $\dim V \geq 2$ and $\# \K>3$.
Then, either every quasi-range-compatible affine map on $\calS$ is local or
there exists a vector $x \in U$ such that $\dim \calS x=1$.
\end{theo}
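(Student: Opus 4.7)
The plan is to mimic the proof of Theorem~\ref{affinemorethan2}, replacing range-compatibility by quasi-range-compatibility throughout and invoking Theorem~\ref{quasiaffinetheo1} in place of Theorem~\ref{affinegeneralfield}. Let $F : \calS \to V$ be a quasi-range-compatible affine map with respect to some $1$-dimensional subspace $D$ of $V$; denote by $S$ the translation vector space of $\calS$ and by $\overrightarrow{F}$ the linear part of $F$. Assume $F$ is non-local; the aim is to exhibit $x \in U$ with $\dim \calS x = 1$.

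The first step is a quasi-compatible analog of Lemma~\ref{quasiRCrank1lemma}: inside the intersection of the kernels of the rank-$1$ elements of $S^\bot$ (non-trivial because $\dim S^\bot = \dim V - 1 < \dim V$) one finds a $1$-dimensional subspace $D_0$, and for every rank-$1$ operator $t \in S$ with $\im t \notin \{D_0, D\}$ one has $\overrightarrow{F}(t) \in \im t$. The proof reuses the hyperplane-intersection strategy of Lemma~\ref{quasiRCrank1lemma}: a hyperplane $H$ is \emph{usable} when it is good (contains the range of some element of $\calS$) and satisfies $D \not\subset H$, so that quasi-range-compatibility applies simultaneously to $s \in \calS$ and to $s + t \in \calS$ whenever $\im s, \im(s + t) \subset H$. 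Dually, realising $\im t$ as an intersection of usable hyperplanes reduces to finding a basis of $(\im t)^o$ avoiding both the subspace of $V^\ast$ attached to bad hyperplanes and $D^o$; the conditions $\im t \neq D_0$ and $\im t \neq D$ keep $(\im t)^o$ outside of each, and $\# \K \geq 3$ allows a standard basis-extraction perturbation.

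Equipped with this, for every $y \in V \setminus (D \cup D_0)$ with $S^\bot y \neq \{0\}$, Lemma~\ref{quasiaffinequotient} yields a quasi-range-compatible affine map $F \modu y$ on $\calS \modu y$ of codimension at most $\dim V - 2$, and Theorem~\ref{quasiaffinetheo1} then supplies $a_y \in U$ with $F(s) \equiv s(a_y) \pmod{\K y}$ for every $s \in \calS$. When $\dim V \geq 3$, Lemma~2.5 of~\cite{dSPfeweigenvalues} (applied to the proper subspaces $D$, $D_0$, and $\{y : S^\bot y = \{0\}\}$) furnishes three linearly independent admissible vectors $y_1, y_2, y_3$, hence $a_1, a_2, a_3 \in U$. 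If two of these coincide, $F$ is local, contradicting non-locality. Otherwise the endgame of Theorem~\ref{affinemorethan2} applies verbatim: $a_1, a_2$ cannot be collinear (lest $\codim \calS \geq \dim V$), so $\calS$ is forced into a specific codimension-$(2 \dim V - 4)$ linear subspace $\calT$ of $\calL(U, V)$, which in turn imposes $\dim V = 3$ and $\calS = \calT$; in suitable bases $\calT$ factors as $\calR_1 \coprod \calR_2 \coprod \Mat_{3, p-2}(\K)$ with $\dim \calR_i = 2$, and the splitting lemma combined with Lemma~\ref{dimU=1lemma} and Corollary~\ref{quasitotalspace} forces $F$ to be local, a final contradiction.

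In the remaining case $\dim V = 2$ and $\# \K > 3$, $\dim S^\bot = 1$. If $S^\bot$ contains a rank-$1$ operator, its image yields $x \in U$ with $Sx$ contained in a proper subspace of $V$; since $\codim S < \dim V$, necessarily $Sx \neq \{0\}$, whence $\dim \calS x = 1$ and the second alternative of the theorem holds directly. If $S^\bot$ is spanned by a rank-$2$ operator, then $S^\bot$ contains no rank-$1$ operator, so in Step~1 one may take $D_0 = D$, making $\overrightarrow{F}$ quasi-range-compatible on all of $S$. In well-chosen bases $S$ is represented by $\Mats_2(\K) \coprod \Mat_{2, p-2}(\K)$; the splitting lemma together with Lemma~\ref{symlemma} (whose hypothesis $\# \K > 3$ is decisive here) and Corollary~\ref{quasitotalspace} forces $\overrightarrow{F}$ to be local, so $F(s) = c + s(x)$ for some $c \in V$ and some $x \in U$. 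A direct analysis of the rank-$1$ matrices in $\calS$ (showing that every $1$-dimensional $L \neq D$ of $V$ is realised as $\im s$ for some $s \in \calS$) combined with quasi-range-compatibility then forces $c = 0$, hence $F$ is local, the final contradiction. The main obstacle is Step~1, where extending the duality argument of Lemma~\ref{quasiRCrank1lemma} to avoid \emph{two} exceptional subspaces simultaneously requires $\# \K \geq 3$ and careful bookkeeping in the basis perturbation.
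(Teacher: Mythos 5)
Your overall plan breaks down at the crucial quotient step in the case $\dim V\geq 3$, and the breakdown is not repairable within your framework because your argument would prove too much.

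You take $y$ with $S^\bot y\neq\{0\}$ and claim that $\calS\modu y$ has small enough codimension for Theorem \ref{quasiaffinetheo1} to apply. But $\codim(\calS\modu y)=\codim\calS-\dim S^\bot y$, so if $\dim S^\bot y=1$ you only get $\codim(\calS\modu y)=\dim V-2=\dim(V/\K y)-1$, which is exactly the \emph{critical} codimension at which Theorem \ref{quasiaffinetheo1} fails (that is the whole point of Theorem \ref{quasiaffinetheo2}). To push the quotient below the critical threshold you need $\dim S^\bot y\geq 2$, and the set of such $y$ is not a linear subspace (it is governed by the quadratic form of Lemma \ref{quasiexistquadform}), so Lemma 2.5 of \cite{dSPfeweigenvalues} cannot be invoked the way you use it. Worse, such $y$ may not exist at all: for $S=\K\vee\Mat_{n-1,p-1}(\K)$ one has $\dim S^\bot y\leq 1$ for every $y\in V$, this space satisfies all the hypotheses with $\codim S=\dim V-1$, and it carries a genuinely non-local quasi-range-compatible map. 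Since your proof in the case $\dim V\geq 3$ ends in ``a final contradiction'' to non-locality, it would show that every quasi-range-compatible affine map is local, which is false; the second branch of the dichotomy ($\dim\calS x=1$) is never produced by your argument.

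The paper's proof fills precisely this hole. It defines $\calS$-adapted vectors by the condition $\dim S^\bot y\geq 2$, shows that three linearly independent such vectors outside $D_0\cup D_1$ lead to a contradiction (much as in your endgame), and then, in the remaining case, uses Lemmas \ref{quasiexistquadform} and \ref{quasipropquadform} to conclude that $\dim S^\bot y\leq 1$ for \emph{all} $y$, whence Lemma \ref{quasialternative} represents $S$ as $\calR\coprod\Mat_{n,p-1}(\K)$; combined with $\codim S=\dim V-1$ this forces $\dim\calR=1$, i.e.\ $\dim\calS x=1$, which is where the second alternative of the theorem actually comes from. Your treatment of the case $\dim V=2$, $\#\K>3$ is essentially sound and close to the paper's, and your Step 1 is the paper's Lemma \ref{secondlinelemma}; but the heart of the theorem for $\dim V\geq 3$ is missing.
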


Again, the examples from Section \ref{affinesection1} show that the upper-bound $\dim V-1$
is optimal. Moreover, the second example from Section \ref{quasiRCsection1} justifies the exclusion of the special
case when $\# \K=3$ and $\dim V=2$.

Under the assumptions of Theorem \ref{quasiaffinetheo2}, if there exists a vector $x \in U$ such that $\calS x$
is a $1$-dimensional \emph{linear} subspace of $V$, then the assumption $\codim \calS \leq \dim V-1$
yields that $\calS$ is actually a linear subspace of $\calL(U,V)$;
the quasi-range-compatible affine maps on $\calS$ are then linear, and their description is given in Proposition \ref{quasidegenerate1}.

Our final result deals with the special case when $\calS x$ is a $1$-dimensional affine subspace of $V$
but not a linear subspace.

\begin{theo}\label{quasiaffinetheo3}
Let $\calS$ be an affine subspace of $\calL(U,V)$ such that
$\codim \calS=\dim V-1$ and $\dim V \geq 2$, and assume that $\# \K>2$.
Assume that there is a vector $x \in \calS$ such that $\dim \calS x=1$
and that $\calS x$ is not a linear subspace of $V$.
Let $F : \calS \rightarrow V$ be a non-local affine map that is quasi-range-compatible with respect to some $1$-dimensional
linear subspace $D_0$ of $V$.
Then:
\begin{enumerate}[(i)]
\item $\# \K=3$;
\item $D_0 \cap \calS x \neq \emptyset$;
\item There is a vector $x' \in U$ such that $F(s)-s(x') \in \Vect(\calS x)$ for all $s \in \calS$;
\item If $\dim V>2$ then there is an endomorphism $\psi$ of $\Vect(\calS x)$ and a vector $x' \in U$ such that
$F(s)=\psi(s(x))+s(x')$ for all $s \in \calS$.
\end{enumerate}
\end{theo}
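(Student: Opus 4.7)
The plan is to exploit a sharp structural consequence of the codimension hypothesis, reducing the theorem via the splitting lemma to a polynomial-identity computation.

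Let $S$ be the translation space of $\calS$ and $\overrightarrow{F}$ its linear part, and fix $s_0 \in \calS$. From $\dim \calS x = 1$ and the non-linearity of $\calS x$ we get $\dim Sx = 1$; write $y_0 := s_0(x)$ and $Sx = \K y_1$ with $y_1 \neq 0$, so $y_0 \notin \K y_1$, $\calS x = y_0 + \K y_1$, and $P := \Vect(\calS x) = \K y_0 \oplus \K y_1$ is a $2$-dimensional subspace of $V$. The pivotal observation is that $\calS$ coincides with the affine subspace $\{s \in \calL(U,V) : s(x) \in \calS x\}$: the latter contains $\calS$ and is an affine subspace of codimension $\dim V - 1$ in $\calL(U,V)$ (fixing $s(x)$ to lie on a $1$-dimensional affine line in $V$ cuts $\dim V - 1$ dimensions), so they agree. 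Choosing bases $(x, e_2, \dots, e_p)$ of $U$ and $(y_0, y_1, f_3, \dots, f_n)$ of $V$, the space $\calS$ is thereby represented as $(y_0 + \K y_1) \coprod \Mat_{n, p-1}(\K)$ via $s \mapsto (\lambda, M')$ with $s(x) = y_0 + \lambda y_1$ and $M'$ the remaining part of the matrix.

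Since $F$ is affine on this affine product, a standard computation yields the decomposition $F(\lambda, M') = \lambda A + B(M') + C$ with constants $A, C \in V$ and a linear map $B : \Mat_{n, p-1}(\K) \to V$. Applying quasi-range-compatibility to the operator corresponding to $(\lambda, 0)$ --- which has image exactly $\K(y_0 + \lambda y_1)$ --- gives $\lambda A + C \in \K(y_0 + \lambda y_1)$ whenever $D_0 \neq \K(y_0 + \lambda y_1)$. Decomposing $A$ and $C$ along $P \oplus W$ for some fixed complement $W$ of $P$ in $V$, the $W$-components must cancel over two distinct values of $\lambda$, so $A, C \in P$; writing $A = a_0 y_0 + a_1 y_1$ and $C = c_0 y_0 + c_1 y_1$, the constraint reduces to the polynomial identity $a_0 \lambda^2 + (c_0 - a_1) \lambda - c_1 = 0$, which must hold for every $\lambda \in \K$ except possibly one (the value $\lambda_0$ with $D_0 = \K(y_0 + \lambda_0 y_1)$). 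For $F$ non-local this polynomial is non-zero (its identical vanishing would give $F$ the local form $s \mapsto s(x'')$), so it has at most two roots, forcing $\#\K - 1 \leq 2$ and hence $\#\K = 3$ (establishing (i)); the exceptional $\lambda_0$ must exist, giving $D_0 = \K(y_0 + \lambda_0 y_1) \in \calS x$ and establishing (ii).

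To derive (iii) and (iv), I would apply quasi-range-compatibility at a general $(\lambda, M')$: the image of the corresponding $s \in \calS$ is $\K(y_0 + \lambda y_1) + \im M'$, so whenever $D_0 \not\subset$ this image, $\lambda A + B(M') + C \in \K(y_0 + \lambda y_1) + \im M'$, which after using $A, C \in P$ reduces to $B(M') \in P + \im M'$. After verifying that $B$ sends operators with image in $P$ into $P$ --- a step needing separate treatment when $D_0 \subset P$, but handled using the precise form $D_0 = \K(y_0 + \lambda_0 y_1)$ from (ii) --- the induced map $\pi_P \circ B$ factors through $\calL(U/\K x, V/P)$ as a range-compatible linear map, which by Theorem \ref{totalspace} is local: there exists $\xi' \in U/\K x$ with $\pi_P(B(M')) = \pi_P(M'(\xi'))$ for all $M'$. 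Lifting $\xi'$ to a vector $x' \in U$ then yields $F(s) - s(x') \in P$ for all $s \in \calS$, establishing (iii). For $\dim V > 2$, the residual map $s \mapsto F(s) - s(x')$ takes values in $P$ and depends on $s$ only through $s(x)$ (by a short comparison between operators with the same $s(x)$ but different $M'$), so it has the form $\psi(s(x))$ for a linear endomorphism $\psi$ of $P$, giving (iv). The main obstacle is precisely the verification that $B$ maps $P$-valued operators to $P$ in the case $D_0 \subset P$, which forms the technical heart of the proof.
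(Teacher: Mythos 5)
Your reduction is sound as far as it goes: the identification of $\calS$ with $\{s : s(x)\in\calS x\}$, the representation $(y_0+\K y_1)\coprod\Mat_{n,p-1}(\K)$, the decomposition $F(\lambda,M')=\lambda A+B(M')+C$, the deduction $A,C\in P$, and the polynomial identity $a_0\lambda^2+(c_0-a_1)\lambda-c_1=0$ on all but at most one $\lambda$ are all correct. But the argument has a genuine gap that you yourself flag without closing, and it is not a peripheral technicality: everything hinges on showing that $B$ is local (equivalently, that after subtracting a local map, $\vec F(s)=0$ for all $s\in S$ with $s(x)=0$). Your justification of (i) already uses it: ``identical vanishing of the polynomial would give $F$ the local form'' is false as stated, since vanishing of the polynomial only makes the $\lambda$-dependent part equal to $s(a_1x)$ and leaves $F(s)=s(a_1x)+B(M')$ with $B$ a priori non-local. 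So (i) and (ii) are conditional on the very step you defer, and (iii)/(iv) depend on it outright.

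The route you propose for that step --- show $\pi_P\circ B$ is range-compatible and that $B$ sends $P$-valued operators into $P$, then apply Theorem \ref{totalspace} --- runs into a real obstruction, not just a case to be checked. When $D_0\subset P$ (which, by your own part (ii), is exactly the situation $D_0=\K(y_0+\lambda_0y_1)$ that must occur), any $M'$ whose range is a line of $P$ other than $D_0$ gives operators $s=(\lambda,M')$ with $\im s=P\supset D_0$ for every $\lambda$, so quasi-range-compatibility yields no constraint whatsoever on $B(M')$; similar degenerate configurations (e.g.\ $D_0\subset\im M'$, or $P\subset D_0+\im M'$) also defeat the direct argument for range-compatibility of $\pi_P\circ B$. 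The paper circumvents precisely this by not working with individual operators: it first uses Lemma \ref{secondlinelemma} (the ``good hyperplane'' intersection technique) to control $\vec F$ on rank-one operators, then picks two distinct, automatically non-collinear vectors $y_1,y_2\in\calS x\setminus D_0$ and passes to the quotients $\calS\modu y_i$, which are \emph{linear} spaces of codimension $\dim(V/\K y_i)-1$ of special type $1$; Proposition \ref{quasidegenerate1} (for $\dim V>2$) or the triviality of the one-dimensional quotient (for $\dim V=2$) then forces the normalization \eqref{nullonsx}, after which your eigenvector/polynomial analysis of $\psi$ on $\calS x\setminus D_0$ delivers (i)--(iv). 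To complete your proof you would need to import that quotient argument (or an equivalent); as written, the proposal is a correct skeleton with its load-bearing step missing.
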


Assuming that $\# \K=3$, the case $\dim V=2$ can be fully described as follows:
without loss of generality, we can assume that $\calS$ is the space of all matrices of the form
$\begin{bmatrix}
a & [?]_{1 \times (p-1)} \\
1-a & [?]_{1 \times (p-1)}
\end{bmatrix}$
with $a \in \K$, and that $D_0=\K\begin{bmatrix}
1 \\
1
\end{bmatrix}$.
Then, the affine maps from $\calS$ to $\K^2$ that are quasi-range-compatible with respect to $D_0$
are the sums of the local maps with the maps of the form
$$(m_{i,j}) \mapsto \begin{bmatrix}
\varepsilon\,m_{1,1} \\
\underset{j=2}{\overset{p}{\sum}} a_j m_{2,j}
\end{bmatrix}$$
for some $\varepsilon \in \K$ and some $(a_2,\dots,a_p)\in \K^{p-1}$.
As this result does not seem particularly useful, we shall leave the details of its proof to the reader.

\subsection{Proof of Theorem \ref{quasiaffinetheo1}}

The following lemma is the equivalent of Lemma \ref{quasiRCrank1lemma} in the theory of quasi-range-compatible affine maps.

\begin{lemma}\label{secondlinelemma}
Let $\calS$ be an affine subspace of $\calL(U,V)$ such that either $\codim \calS \leq \dim V-3$, or $\codim \calS \leq \dim V-1$ and $\# \K>2$.
Let $F : \calS \rightarrow V$ be an affine map that is range-compatible with respect to some $1$-dimensional linear subspace $D_0$ of $V$.
Denote by $S$ the translation vector space of $\calS$ and by $\vec{F} : S \rightarrow V$ the linear part of $F$.
Let $D_1$ be a $1$-dimensional linear subspace of $V$ that is included in the kernel of every operator of $S^\bot$.

Then, for every $1$-dimensional subspace $D$ of $V$ that is different from both $D_0$ and $D_1$,
$$\forall s \in S, \quad \im s=D \; \Rightarrow \; \vec{F}(s) \in D.$$
\end{lemma}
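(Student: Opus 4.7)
The plan is to adapt the strategy of Lemma \ref{quasiRCrank1lemma} to the quasi-range-compatible setting. I call a linear hyperplane $H$ of $V$ \emph{good} if there exists $s_0 \in \calS$ with $\im s_0 \subset H$ and $D_0 \not\subset H$. For such $H$ and any $t \in S$ with $\im t \subset H$, both $s_0$ and $s_0+t$ lie in $\calS$ with ranges included in $H$, so neither range contains $D_0$; quasi-range-compatibility yields $F(s_0), F(s_0+t) \in H$, whence $\vec{F}(t) = F(s_0+t) - F(s_0) \in H$. Consequently, given $s \in S$ with $\im s = D$, it suffices to produce good hyperplanes $H_1, \dots, H_q$ (with $q = \dim V - 1$) whose intersection is $D$; one then has $\vec{F}(s) \in \bigcap_k H_k = D$.

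Equivalently, I seek a basis $(\varphi_1, \dots, \varphi_q)$ of the annihilator $D^o \subset V^\star$ such that each $\Ker \varphi_k$ is good. The condition $D_0 \not\subset \Ker \varphi_k$ reads $\varphi_k \notin D_0^o$. For the existence of an $s_0 \in \calS$ with $\im s_0 \subset \Ker \varphi_k$, the surjectivity argument from the proof of Lemma \ref{quasiRCrank1lemma} shows it suffices that $\Ker \varphi_k$ not be the kernel of any rank-one operator in $S^\bot$, which is itself implied by $\varphi_k \notin W^o$, where $W := \bigcap_{t \in S^\bot,\,\rk t = 1}\Ker t$. The hypothesis on $D_1$ forces $D_1 \subset W$ and hence $W^o \subset D_1^o$; moreover, by selecting linearly independent rank-one operators of $S^\bot$ with linearly independent $y$-parts (dual-basis trick), one verifies that $\dim W^o \leq \dim S^\bot = \codim\calS$.

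The heart of the proof is thus the construction of a basis of $D^o$ (of dimension $m := \dim V - 1$) avoiding both proper subspaces $H_0 := D^o \cap D_0^o$ and $H_1 := D^o \cap W^o$. The former is a hyperplane of $D^o$ since $D \neq D_0$, and the latter is contained in $D^o \cap D_1^o$, a proper subspace since $D \neq D_1$. When $\#\K > 2$, a recursive extraction works: at each step, the three proper subspaces $H_0$, $H_1$, and the span of the vectors already chosen cannot cover $D^o$. When $\#\K = 2$, the stronger hypothesis $\codim \calS \leq \dim V - 3$ gives $\dim H_1 \leq \codim\calS \leq m - 2$. If $H_1 \subset H_0$, then $D^o \setminus (H_0 \cup H_1) = D^o \setminus H_0$ and a standard basis of this affine hyperplane is immediate. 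Otherwise $H_1 + H_0 = D^o$ and $\dim(H_0 \cap H_1) = \dim H_1 - 1 \leq m - 3$; picking $v_0 \in D^o \setminus (H_0 \cup H_1)$ (non-empty by a cardinality estimate), the affine subspace $A := (H_1 + v_0) \cap H_0$ of $H_0$ has dimension at most $m - 3 = \dim H_0 - 2$, so a basis $(h_1, \dots, h_{m-1})$ of $H_0$ lying in $H_0 \setminus A$ can be extracted by a size-counting recursion, and $(v_0, v_0 + h_1, \dots, v_0 + h_{m-1})$ is then the desired basis of $D^o$. The main technical obstacle is this $\F_2$ basis construction; every other step is a direct adaptation of the proof of Lemma \ref{quasiRCrank1lemma}.
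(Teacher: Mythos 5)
Your proposal is correct and follows essentially the same route as the paper: good hyperplanes containing the range of some element of $\calS$ but not $D_0$, the observation that $\vec{F}(t)\in H$ for such $H$ whenever $\im t\subset H$, and the reduction to finding a basis of $D^o$ avoiding $D_0^o\cup W^o$ where $W$ is the common kernel of the rank-one operators of $S^\bot$. The only difference is that the paper delegates the basis-extraction step to Lemma 2.5 of \cite{dSPfeweigenvalues}, whereas you prove it by hand (including the $\F_2$ case via the affine-subspace argument), which is a valid self-contained substitute.
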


\begin{proof}
Set $n:=\dim V$.
As in the proof of Lemma \ref{quasiRCrank1lemma}, we consider the span $\calT$ of the rank $1$ operators in $\calS^\bot$,
and we note that $W:=\underset{t \in \calT}{\bigcap} \Ker t$ is a linear subspace of $V$
with dimension at least $\dim V-\codim \calS \geq 1$.

Let $H$ be a linear hyperplane of $V$ that includes neither $D_0$ nor $W$.
Then, on the one hand we have $F(s) \in H$ for all $s \in \calS$ such that $\im s \subset H$
(since $D_0 \not\subset H$) and on the other hand as $W \not\subset H$ there actually exists $s \in \calS$
such that $\im s \subset H$. Then, with the same line of reasoning as in the proof of
Lemma \ref{quasiRCrank1lemma}, we deduce that $\vec{F}(s) \in H$ for all $s \in S$ such that $\im s \subset H$.

Assume first that $\# \K>2$, and let $D_1$ be an arbitrary $1$-dimensional linear subspace of $W$.
Let $D$ be a $1$-dimensional linear subspace of $V$ that differs from $D_0$ and $D_1$.
Then, we can find linear hyperplanes $H_1,\dots,H_{n-1}$ of $V$, each of which includes neither $D_0$ nor $D_1$,
such that $D=\underset{k=1}{\overset{n-1}{\bigcap}} H_k$.
To see this, denote by $L^o$ the dual orthogonal of an arbitrary subspace $L$ of $V$,
as defined by $\{\varphi \in V^\star : \forall y \in L, \; \varphi(y)=0\}$, where $V^\star$ denotes the space of all linear forms on $V$.
Then, $D^o \cap D_0^o$ and $D^o \cap D_1^o$ are linear hyperplanes of $D^o$
and, as $\# \K>2$, Lemma 2.5 of \cite{dSPfeweigenvalues} yields a basis $(\varphi_1,\dots,\varphi_{n-1})$ of $D^o$ in which no vector belongs to
$D_1^o \cup D_0^o$; we simply take $H_i:=\Ker \varphi_i$ for all $i \in \lcro 1,n-1\rcro$.
It follows that for all $s\in S$ such that $\im s=D$, we have
$\vec{F}(s) \in \underset{k=1}{\overset{n-1}{\bigcap}} H_k=D$.

Assume finally that $\# \K=2$. Let $D$ be a $1$-dimensional linear subspace of $V$ that differs from $D_0$.
Then, $\dim W \geq 3$ and hence $D^o \cap W^o$ has codimension at least $2$ in $D^o$, while $D^o \cap D_0^o$ is a linear hyperplane of $D^o$.
By Lemma 2.5 of \cite{dSPfeweigenvalues} we find a basis of $D^o$ in which no vector belongs to $W^o \cup D_0^o$.
It follows that we can find linear hyperplanes $H_1,\dots,H_{n-1}$ of $V$ such that $D=\underset{k=1}{\overset{n-1}{\bigcap}} H_k$
and $D_0 \not\subset H_k$ and $W \not\subset H_k$ for all $k \in \lcro 1,n-1\rcro$. As above we deduce that
$\vec{F}(s) \in D$ for all $s\in S$ such that $\im s=D$. Thus, $D_1$ qualifies.
\end{proof}

From there, the proof of Theorem \ref{quasiaffinetheo1} is very similar to the one of Theorem \ref{quasitheo1}. Again, the proof is done by induction on $\dim V$. The result is vacuous if $\dim V<2$. If $\calS=\calL(U,V)$, then the result follows from Corollary
\ref{quasitotalspace} (remember that every affine quasi-range-compatible map is linear provided that its domain contains the zero operator).
In particular, this yields the result whenever $\dim V=2$.

Assume now that $\dim V>2$ and that $\calS \neq \calL(U,V)$. In particular, if $\# \K=2$ then $\dim V \geq 4$.
Let $F : \calS \rightarrow V$ be an affine map that is quasi-range-compatible with
respect to some $1$-dimensional subspace $D_0$ of $V$. Fix a $1$-dimensional subspace $D_1$ of $V$
given by Lemma \ref{secondlinelemma}. Denote by $S$ the translation vector space of $\calS$.
As $S^\bot \neq \{0\}$, the space $H:=\underset{t \in S^\bot}{\bigcap} \Ker t$ is a proper linear subspace of $V$.
For all $y \in V \setminus H$, we see that
$$\codim(\calS \modu y)<\codim \calS$$
and hence
$$\codim(\calS \modu y) \leq \dim(V/\K y)-2,$$
and
$$\codim(\calS \modu y) \leq \dim(V/\K y)-3 \quad \text{if $\# \K=2$.}$$
By Lemma 2.5 of \cite{dSPfeweigenvalues}, we can pick non-collinear vectors $y_1$ and $y_2$ in $V \setminus (H \cup (D_0+D_1))$
(if $\# \K=2$ this uses the fact that $\dim V \geq 4$).
By Lemma \ref{quasiaffinequotient}, the affine maps $F \modu y_1$ and $F \modu y_2$ are well-defined and quasi-range-compatible.
By induction we find vectors $x_1$ and $x_2$ in $U$ such that
$$\forall s \in \calS, \quad F(s)=s(x_1) \mod \K y_1 \quad \text{and} \quad F(s)=s(x_2) \mod \K y_2.$$
If $x_1=x_2$, then $F$ is the local map $s \mapsto s(x_1)$.

Assume now that $x_1 \neq x_2$. Then, $\calS$ is included in the linear subspace $\calT$ of all $t \in \calL(U,V)$
such that $t(x_1-x_2) \in \Vect(y_1,y_2)$. As $\calT$ has codimension $\dim V-2$ in $\calL(U,V)$
we conclude that $\calS=\calT$. Thus, $\calS$ is actually a linear subspace of $\calL(U,V)$, and Theorem \ref{quasitheo1}
yields that $F$ is local.

This completes the inductive proof of Theorem \ref{quasiaffinetheo1}.

\subsection{Proof of Theorem \ref{quasiaffinetheo2}}

This time around, the proof is not done by induction. Rather, we use the full force of earlier results.

Denote by $S$ the translation vector space of $\calS$.
Let us assume that there is an affine map $F : \calS \rightarrow V$ that is non-local
but that is quasi-range-compatible with respect to some $1$-dimensional linear subspace $D_0$ of $V$.
We denote by $\vec{F} : S \rightarrow V$ its linear part. We seek to find a vector $x \in U$ such that $\dim \calS x=1$.

If $U=\{0\}$ then $\calS=\{0\}$ and hence $F=0$, which contradicts our assumption that $F$ is non-local.
Thus $\dim U>0$. If $\dim V=1$, then $\calS=\calL(U,V)$ and we immediately find a vector $x \in U$ such that $\dim \calS x=1$.

\vskip 3mm
Assume now that $\dim V=2$ and $\# \K>3$. Then, $\dim S^\bot=1$.
If $S^\bot$ contains a rank $1$ operator, then we choose a non-zero vector $x$ in its range, and we obtain that $\dim \calS x=1$.
Assume now that $S^\bot$ contains no rank $1$ operator:
then, in Lemma \ref{secondlinelemma} we can take $D_1=D_0$, and hence $\vec{F}$ is quasi-range-compatible with respect to $D_0$.
We can therefore extend $F$ into a quasi-range-compatible linear map on $\Vect(\calS)$ (this uses the fact that $F(0)=0$ if $0 \in \calS$).
There are two options: if $\Vect(\calS)=\calL(U,V)$ then Theorem \ref{quasitheo1} shows that the linear extension of $F$ to
$\calS$ is local, which yields that $F$ is local, contradicting our assumptions; otherwise
$\calS$ is a linear subspace of $\calL(U,V)$, and point (a) of Theorem \ref{quasitheo2}
shows that $\calS x$ has dimension $1$ for some $x \in U$ (since $F$ is a non-local quasi-range-compatible linear map on $\calS$).

\vskip 3mm
In the rest of the proof, we assume that $\dim V>2$ and $\# \K>2$.
Lemma \ref{secondlinelemma} yields a $1$-dimensional linear subspace $D_1$ of $V$
such that $\vec{F}(s) \in \im s$ for all $s \in S$ such that $\im s$ has dimension $1$ and differs from both $D_0$ and $D_1$.
We shall say that a vector $y \in V \setminus \{0\}$ is \textbf{$\calS$-adapted} whenever
$\dim S^\bot y \geq 2$. For any such vector $y$, provided that it lay outside of $D_0 \cup D_1$,
the affine map $F \modu y$ is well-defined and quasi-range-compatible, whereas $\codim (\calS \modu y) \leq \dim (V \modu \K y)-2$,
and hence Theorem \ref{quasiaffinetheo1} yields that $F \modu y$ is local.
Assume now that we can find three linearly independent $\calS$-adapted vectors $y_1,y_2,y_3$ in $V \setminus (D_0 \cup D_1)$.
This yields vectors $x_1,x_2,x_3$ in $U$ such that
$$\forall i \in \{1,2,3\}, \; \forall s \in \calS, \quad F(s)=s(x_i) \mod \K y_i.$$
As $F$ is non-local, the vectors $x_1,x_2,x_3$ must be pairwise distinct. Replacing $F$ with $s \mapsto F(s)-s(x_3)$,
we lose no generality in assuming that $x_3=0$.
Then, $x_1$ and $x_2$ are distinct non-zero vectors of $U$ and, for all $s \in \calS$,
\begin{equation}\label{belongid}
s(x_1) \in \Vect(y_1,y_3), \quad  s(x_2) \in \Vect(y_2,y_3) \quad \text{and} \quad s(x_1-x_2) \in \Vect(y_1,y_2).
\end{equation}
As $\Vect(y_1,y_3) \cap \Vect(y_2,y_3) \cap \Vect(y_1,y_3)=\{0\}$, the collinearity of $x_1$ with $x_2$ would entail that
$s(x_1)=0$ for all $s \in \calS$, leading to $\codim \calS \geq \dim V$ and thereby contradicting our assumptions.
Therefore, $x_1$ and $x_2$ are non-collinear.
Denote then by $\calT$ the linear subspace of $\calL(U,V)$ consisting of all the operators $s$ that satisfy \eqref{belongid}.
Clearly, $\codim \calT>2\dim V-4$. On the other hand $\calS \subset \calT$ whence $\codim \calT \leq \codim \calS = \dim V-1$.
This would lead to $\dim V<3$, in contradiction with an earlier assumption.

Thus, there is a $2$-dimensional linear subspace $P$ of $V$ such that $\dim S^\bot y \leq 1$
for all $y \in V \setminus (P \cup D_0 \cup D_1)$.
From there, we deduce that $\dim S^\bot y \leq 1$ for all $y \in V$: indeed,
with the same line of reasoning as in the proof of Lemma \ref{quasiexistquadform},
the contrary would yield a non-zero quadratic form $q$ on $V$
that vanishes everywhere on $V \setminus (P \cup D_0 \cup D_1)$, whereas Lemma \ref{quasipropquadform} shows that no such quadratic form exists.
Combining $\codim S=\dim V-1$ with Lemma \ref{quasialternative},
we deduce that there exists a vector $x \in U$ such that $\dim Sx=1$, and hence $\dim \calS x=1$.

This completes the proof of Theorem \ref{quasiaffinetheo2}.

\subsection{Proof of Theorem \ref{quasiaffinetheo3}}

Denote by $S$ the translation vector space of $\calS$.

First, the assumptions show that $\calS$ is exactly the affine subspace consisting of all the operators $s \in \calL(U,V)$ such that $s(x) \in \calS x$
(because it is included in it and both of them have codimension $\dim V-1$ in $\calL(U,V)$).
In particular all the non-zero operators in $S^\bot$ have range $\K x$.

It follows that the intersection $D_1$ of the kernels of the rank $1$ operators in $S^\bot$ has dimension $1$.
Note that $D_1$ is the translation vector space of $\calS x$!
By Lemma \ref{secondlinelemma}, we obtain that $\vec{F}(s) \in \im s$ for every rank $1$ operator $s \in S$ such that $\im s$ differs from both $D_0$ and $D_1$.
Next, set
$$P:=\Vect(\calS x)$$
and note that $P$ is a $2$-dimensional linear subspace of $V$.

Assume for the moment that
\begin{equation}\label{nullonsx}
\forall s \in S, \quad s(x)=0 \Rightarrow \vec{F}(s)=0.
\end{equation}
This yields an affine map $\psi : \calS x \rightarrow V$ such that
$$\forall s \in \calS, \quad F(s)=\psi(s(x)).$$
Note that $\# (\calS x \setminus D_0) \geq 2$.
For all $y \in (\calS x) \setminus D_0$, we can find an operator $s \in \calS$ such that $\im s=\K y$ and $s(x)=y$, leading to
$\psi(y) \in \K y$ since $y \not\in (D_0 \cup D_1)$. Denoting by $\widetilde{\psi} : P \rightarrow V$
the linear extension of $\psi$, we deduce that $\widetilde{\psi}$ is an endomorphism of $P$
and that every $y \in (\calS x) \setminus D_0$ is an eigenvector of it. In particular, we have just proved statements (iii) and (iv)
(still assuming that \eqref{nullonsx} holds).
In that case, if in addition $\# \K>3$ or $D_0 \cap (\calS x)=\emptyset$, we deduce that $\widetilde{\psi} : z \mapsto \lambda z$
for some fixed scalar $\lambda$, and hence $F : s \mapsto s(\lambda x)$, contradicting the assumption that $F$ should be non-local.
Thus, if condition \eqref{nullonsx} holds then the conclusion of Theorem \ref{quasiaffinetheo3} is satisfied.

\vskip 3mm
In the rest of the proof we try to reduce the situation to the one where condition \eqref{nullonsx} holds.
Note that neither the assumptions nor the conclusions of Theorem \ref{quasiaffinetheo3} are modified in subtracting a local map from $F$.

Let us choose distinct vectors $y_1$ and $y_2$ in $\calS x \setminus D_0$.
Note then that $y_1$ and $y_2$ are non-collinear. Moreover, for all $i \in \{1,2\}$,
$\calS \modu y_i$ is actually a linear subspace of $\calL(U,V/\K y_i)$, and
$(\calS \modu y_i) x =P/ \K y_i$ has dimension 1.
For all $i \in \{1,2\}$, we have a well-defined quasi-range-compatible affine map
$F \modu y_i  : \calS \modu y_i \rightarrow V/\K y_i$. As $\calS \modu y_i$ is a linear subspace of
$\calL(U,V/\K y_i)$, the map $F \modu y_i$ is actually linear.

From there, we split the discussion into two cases.

\vskip 3mm
\noindent \textbf{Case 1: $\dim V>2$.} \\
Let $i \in \{1,2\}$.
By Proposition \ref{quasidegenerate1} applied to $F \modu y_i$, we can write
$$(F \modu y_i): \; s \mapsto \varphi_i(s(x))+s(x_i)$$
for some linear map $\varphi_i : (\calS \modu y_i) x \rightarrow V/(\K y_i)$
and some vector $x_i \in (U \setminus \K x) \cup \{0\}$.
Replacing $F$ with $s \mapsto F(s)-s(x_1)$, we see that no generality is lost in assuming that $x_1=0$
(we do not replace $x_2$ with $x_2-x_1$, rather we apply Proposition \ref{quasidegenerate1} to the new map
$F \modu y_2$).
Note that the linear part $\vec{F}$ of $F$ then satisfies
\begin{equation}\label{linearpartequation}
\forall s \in S, \quad s(x)=0 \Rightarrow \bigl(\vec{F}(s) \in \K y_1 \quad \text{and} \quad \vec{F}(s)=s(x_2) \mod \K y_2  \bigr).
\end{equation}

If $x_2 \neq 0$, then $x$ and $x_2$ are linearly independent; as $\dim V>2$ there would then exist
$s \in S$ such that $s(x)=0$ and $s(x_2) \not\in P$, contradicting condition \eqref{linearpartequation}.
Therefore, $x_2=0$. Using \eqref{linearpartequation}, we see that condition \eqref{nullonsx} holds.

\vskip 3mm
\noindent \textbf{Case 2: $\dim V=2$.} \\
Then, $P=V$ and hence conclusion (iii) is trivially true.
Let $i \in \{1,2\}$. Then, as $\calS \modu y_i$ is the full space $\calL(U,V/\K y_i)$ and as $\dim (V/\K y_i)=1$, every linear map
from $\calL(U,V/\K y_i)$ to $V/\K y_i$ is local. This yields
vectors $x_1,x_2$ in $U$ such that
$$\forall i \in \{1,2\}, \; \forall s \in \calS, \quad F(s)=s(x_i) \mod \K y_i.$$
Subtracting the local map $s \mapsto s(x_1)$ from $F$, we lose no generality in assuming that $x_1=0$.

If $x_2 \in \K x$, then condition \eqref{nullonsx} holds and the conclusion follows.

Assume finally that $x_2 \not\in \K x$.
If $\# \K>3$ or $D_0 \cap \calS x=\emptyset$, then we can choose $y_3 \in \calS x$ that belongs to none of $\K y_1$, $\K y_2$ and $D_0$;
then, we can choose $s \in \calS$ such that $s(x_2)=y_3$ and $\im s=\K y_3$;
it follows that $F(s) \in \im s \cap \K y_1=\{0\}$; hence, $s(x_2) \in \K y_2$, contradicting the assumption that
$y_3 \not\in \K y_2$. Therefore, $\# \K=3$ and $D_0 \cap \calS x \neq \emptyset$. Thus, conclusions (i), (ii) and (iii) are satisfied,
which completes the proof in the case when $\dim V=2$.

Thus, our last theorem is established.

\end{document}